\g@addto@macro\bfseries{\boldmath}
\newtheorem{theorem}{Theorem}
\newtheorem{prop}[theorem]{Proposition}
\newtheorem{lemma}{Lemma}
\newtheorem{cor}[theorem]{Corollary}
\newlist{remarklist}{enumerate}{10}
\setlist[remarklist]{leftmargin=0ex, itemindent=1.5\parindent,
  label={\arabic*)}, itemsep=0ex}
\numberwithin{equation}{section}
\long\def\comment#1{}  
\def\FT#1{\widehat#1}              
\def\wt#1{\widetilde#1}
\def\conj#1{\overline{#1}}  
\def\Im{\operatorname{Im}}               
\def\mean{\operatorname{E}}              
\def\pr{\operatorname{P}}                
\def\proj{\pi}                     
\def\Re{\operatorname{Re}}               
\def\rx{\epsilon}                  %
\def\sgn#1{\operatorname{sign}#1}
\def\sph#1{\mathbb{S}^{#1}}        
\def\sint{\int_{\sph{d-1}}}
\def\rint{\int_{\Reals^d}}
\def\Iff{\Longleftrightarrow}   
\def\cf#1{\mathbf{1}\!\Cbr{#1}}
\def\Grp#1{\left(#1\right)}
\def\Cbr#1{\left\{#1\right\}}
\def\Sbr#1{\left[#1\right]}
\def\Cil#1{\left\lceil#1\right\rceil}
\def\Abs#1{\left|#1\right|}
\def\Flr#1{\left\lfloor#1\right\rfloor}
\def\Norm#1{\left\|#1\right\|}
\def\ip#1#2{\langle #1,#2\rangle}  
\def\dto{\downarrow}
\def\toi{\to\infty}
\def\dd{\mathrm{d}}
\def\iunit{\mathrm{i}}
\def\dg{q}
\def\nth#1{\frac{1}{#1}}
\def\lfrac#1#2{#1/#2}     
\def\th{\sp{\rm th}}
\def\Sp#1{\sp{(#1)}}
\def\Coms{\mathbb{C}}
\def\Nats{\mathbb{N}}
\def\Ints{\mathbb{Z}}
\def\Reals{\mathbb{R}}
\def\intzi{\int_0^\infty}
\def\sumoi#1{\sum_{#1=1}^\infty}
\def\sumzi#1{\sum_{#1=0}^\infty}
\def\eno#1#2{#1_1, \ldots, #1_{#2}}             
\def\Cal#1{\mathcal{#1}}
\def\@cleandot{\@ifnextchar.{}{\@ifnextchar,{.}{\@ifnextchar;{.}{\@ifnextchar?{.}{\@ifnextchar:{.}{\@ifnextchar!{.}{\@ifnextchar'{.}{\@ifnextchar){.}{.\ }}}}}}}}}
\def\pdf{p.d.f\@cleandot}
\def\lhs{l.h.s\@cleandot}
\def\rhs{r.h.s\@cleandot}
\def\ac{a.c\@cleandot}
\def\wrt{w.r.t\@cleandot}
\def\const{\mathsf{const}}
\def\levy{L\'evy\xspace}
\begin{document}
\begin{center}
  \large
  \textbf{Spherical harmonic analysis for multivariate stable
    distributions\footnote{Research partially supported by NSF Grant
      DMS 1720218.}
  }
  \\[1.5ex]
  \normalsize
  Zhiyi Chi \\
  Department of Statistics\\
  University of Connecticut \\
  Storrs, CT 06269, USA, \\[.5ex]
  E-mail: zhiyi.chi@uconn.edu \\[1ex]
  \today
\end{center}

\begin{abstract}
  Series representations consisting of spherical harmonics are obtained
  for characteristic exponents and probability density functions of
  multivariate stable distributions under various conditions.  A
  result potentially applicable in a practical setting is that for any 
  distribution with stability index not equal to 1 and with a
  polynomial spectral spherical density, the series representation
  converges absolutely with all terms being calculable in closed
  form.  Asymptotic expansions consisting of spherical harmonics are
  also considered for probability density functions.

  \medbreak
  \textit{Keywords and phrases.}  Spherical harmonics; multivariate
  stable; special function
  
  \medbreak
  2000 Mathematics Subject Classifications: Primary 60G51;
  Secondary 60E07.

  \medbreak
  \textbf{Acknowledgment.}  The research is partially supported by
  NSF Grant DMS 1720218. 
\end{abstract}

\section{Introduction} \label{s:intro}
The probability density functions (\pdf's) of univariate stable
distributions, i.e., stable distributions on $\Reals$, have been well
known for a long time.  In contrast, much less can be said about the
\pdf's of multivariate stable distributions except for a few cases,
such as spherically symmetric stable distributions, or more generally,
subordinated normal distributions  \cite {uchaikin:99:vsp}, and direct
products of such distributions and univariate stable distributions.
Many efforts have been dedicated to the understanding of multivariate
stable distributions; see \cite{nolan:98:pght} for an
review.  One approach is to approximate the distributions by more
tractable ones, such as series of simple random variables or stable
distributions with discrete spectral spherical measures
\cite{davydov:02:ltps, davydov:02:jma, byczkowski:93:jma}.  This
approach provides error bounds of approximation but not functional
forms of the distributions.  On the other hand, \cite
{abdul-hamid:98:jma} gives integral expressions for multivariate
stable \pdf's and \cite {fallahgoul:14:jstp} provides analytic
approximations of the \pdf's by solutions of partial differential
equations of fractional order. Despite the progress, it has been
difficult to extend several important representations for univariate
stable distributions (\cite {uchaikin:99:vsp}, chapter 4) to
the multivariate ones.  To a large degree, the difficulty is due to a
lack of analytic tools.  However, in the study on estimation for
multivariate stable distributions, spherical harmonic analysis has
already been used \cite {pivato:03:jma}.  Inspired by this, the aim of
the paper is to apply spherical harmonic analysis to get more
understanding of the \pdf's of multivariate stable distributions.

Let $\mu$ be an $\alpha$-stable probability distribution on
$\Reals^d$ with no shift, where $\alpha\in (0,2)$ is the stability
index.  Unless $\mu$ is a unit mass at 0, there is a finite nonzero
Borel measure $\lambda$ on the unit sphere $\sph{d-1} =
\{x\in\Reals^d{:}~|x| = 1\}$, where $|x|$ denotes the Euclidean norm,
such that the Fourier transform of $\mu$ is (\cite{sato:99:cup},
Theorem 14.10)
\begin{align*}
  \FT\mu(z)
  &= \rint e^{\iunit\ip x z}\mu(\dd x) = \exp\{-\Phi_\mu(z)
    \}, \quad
    z\in\Reals^d,
\end{align*}
where
\begin{align}\label{e:che-ss}
  \Phi_\mu(z)
  =
  \begin{cases}
    \displaystyle\int_{\sph{d-1}} |\ip z v|^\alpha
    \Sbr{1-\iunit\tan\frac{\pi\alpha} 2 \sgn{\ip z v}}\!
    \lambda(\dd v)& \text{if } \alpha\ne1
    \\[2ex]
    \displaystyle\int_{\sph{d-1}}
    \Sbr{|\ip z v| + \iunit \frac{2}{\pi} \ip z v\ln|\ip z v|}\!
    \lambda(\dd v)
    & \text{if } \alpha=1
  \end{cases}
\end{align}
is known as the characteristic exponent of $\mu$.  The measure
$\lambda$ is unique (\cite {bochner:55:ucp}, Theorem 3.4.2) and will
be referred to as the spectral spherical measure of $\mu$, although it
has been called the spectral or Poisson spectral measure elsewhere
(cf.~\cite{davydov:02:jma}).  If the support of $\mu$ is not contained
in $a+E$ for any $a\in\Reals^d$ and linear subspace $E\subset
\Reals^d$ with $\dim(E)<d$, then $\mu$ is called nondegenerate and has
a bounded continuous \pdf with respect to (\wrt)\ the Lebesgue measure
(\cite{sato:99:cup},  Definition 24.16 and Example 28.2)
\begin{align}\label{e:ift-ss}
  g(x)=
  (2\pi)^{-d} \rint e^{-\iunit\ip xz} \FT\mu(z)\,\dd z.
\end{align}
From \eqref{e:che-ss} and \eqref{e:ift-ss}, the calculation of the
\pdf boils down to two integrals, one for $\Phi_\mu$, the other for
the inverse Fourier transform of $\exp\{-\Phi_\mu\}$.  To tackle the
integrals, a significant portion of the paper will consider the case
where $\lambda$ has a density \wrt the Haar measure on $\sph{d-1}$,
henceforth referred to as the spectral spherical density.

Section \ref{s:prelim} sets up notation.  It also lists basic facts
about spherical harmonics and several other special functions,
all selected from the classical books \cite {andrews:99:cup} and
\cite{stein:71:pup}.  Section \ref{s:che} considers the characteristic
exponential $\Phi_\mu$ of a nondegenerate $\alpha$-stable distribution
$\mu$ that has no shift.  In general, if $\alpha\ne1$, then
$\Phi_\mu(z)$ can be written as $|z|^\alpha V(u_z)$, where $u_z$ is
the unit vector with the same direction as $z$.  Under the condition
that $\mu$ has a square-integrable spectral spherical density $P$, the
section shows that $V$ can be expressed as a series of spherical
harmonics, each being an explicit multiple of a spherical harmonic of
$P$.  As a result, if $P$ is a polynomial, then $\Phi_\mu(z)$ has a
closed form.  Similar results also hold when $\alpha=1$.  Thus, it is
possible to get the characteristic exponential in a practical setting,
although a direct calculation typically is tedious; see comments in
Section \ref{ss:closedform}.

Sections \ref{s:pdf(0,1)} and \ref{s:pdf(1,2)} consider the \pdf
$g(x)$ for nondegenerate $\alpha$-stable distributions with
$\alpha\ne1$ and no shift.  In both sections, some emphasis is given
to series representations that are absolutely convergent (\ac) and
have all terms calculable in closed form.  Section \ref{s:pdf(0,1)}
deals with the case $\alpha\in(0,1)$.  It shows that $g(x)$ can be
represented as an infinite series of spherical harmonics, each coming
from a positive integral power of the aforementioned $V$ and weighted
by a negative fractional power of $|x|$.  When $\mu$ has a polynomial
spectral spherical density, the series is uniformly \ac in
$\{x{:}~|x|\ge r\}$ for any $r>0$ and all the spherical harmonics can
be written in closed form.  On the other hand, for the general case
where $\mu$ may not have a spectral spherical density, only
convergence in an $L^2$ sense is established, which nevertheless is
strong enough to yield the \pdf of  $|X|$ with $X\sim\mu$.  It is of
interest to consider cases in between the above two.  For example,
when $\mu$ has a square-integrable spectral spherical density that is
not a polynomial, it would be desirable to have an \ac series of
spherical harmonics for $g(x)$.  Results of this sort will require a
better understanding of the spherical harmonics of high powers of $V$.
The asymptotic expansion of $g(x)$ at $0$ is also derived in this
section.  In contrast to the series representations, the asymptotic
expansion consists of spherical harmonics that come from negative
powers of $V$ and are weighted by nonnegative integral powers of
$|x|$.

Section \ref{s:pdf(1,2)} deals with the case $\alpha\in (1,2)$.
and furnishes two \ac series representations of $g(x)$ in terms of
spherical harmonics.  The first one consists of spherical harmonics
coming from negative fractional powers of $V$.  However, these
spherical harmonics do not have easily available closed form even when
$V$ is a nonconstant polynomial.  The second series representation
allows all its spherical harmonic terms to be expressed  in closed
form when $\mu$ has a polynomial spectral spherical density.  However,
it requires a somewhat arbitrary parameter.  The section also
considers the asymptotic expansion of $g(x)$ as $|x|\toi$.  Compared
to the same problem in the univariate case, the analysis is much more
subtle.  The section obtains the asymptotic expansion of the spherical
harmonic of $g$ of every fixed degree.  There is still a significant
gap between the result and a full asymptotic expansion of $g$,
although the former strongly suggests the latter.  On the other hand,
the result provides useful information such as an asymptotic expansion
of the \pdf of $|X|$ with $X\sim\mu$.

As applications of the above results, Section \ref{s:example}
illustrates the case where the spectral spherical density is a linear
function and Section \ref{s:sampling} shows that the series
representations can be applied to sample multivariate stable
distributions.  However, the important issue of efficient sampling,
which is available for univariate stable \pdf's
(cf.~\cite{devroye:86:sv-ny}, section IV.6), is beyond the scope of
the paper.  Also, the paper has no result on the \pdf for $\alpha=1$.
In view of currently available results in the univariate case (\cite
{sato:99:cup}, p.~88), it is possible that the multivariate case for  $\alpha=1$ does not admit a simple series  representation.

\section{Preliminaries} \label{s:prelim}
Denote $\Nats=\{1,2,\ldots\}$, $\Ints_+ = \{0\}\cup \Nats$, and
$\omega = \omega_{d-1}$ the measure on $\sph{d-1}$ such that for $f\in
L^1(\Reals^d)$, $\rint f(x) \,\dd x = \sint [\intzi f(s v)
s^{d-1}\,\dd s]\,\omega(\dd v)$.  For $0\ne x\in\Reals^d$, let
$u_x=x/|x|$.  Since $u_x$ will be used only when $x\ne0$ or in
functions of the form $|x|^a f(u_x)$ with $a>0$ and $f$ bounded, $u_0$
need not be specified.

\subsection{Spherical harmonics} \label{ss:sh}
A polynomial on $\Reals^d$ of degree $j\in\Ints_+$ has the form $f(x)
= \sum c_a x^a$, $x = (\eno x d)\in \Reals^d$, where the sum is taken
over all $a = (\eno a d)\in \Ints^d_+$ with $\sum a_i\le j$, $x^a$
denotes $\prod x^{a_i}_i$, and each $c_a\in\Coms$ is a constant.  If
$c_a=0$ unless $\sum a_i=j$, then $f$ is said to be homogeneous of
degree $j$ and $f(x) = |x|^j f(u_x)$ for $x\ne0$.  If $\sum
\frac{\partial^2 f}{\partial x^2_i}\equiv0$, then $f$ is said to be
harmonic.  Restrictions of homogeneous harmonic polynomials to
$\sph{d-1}$ are called spherical harmonics (\cite{andrews:99:cup},
Section 9.4).  Due to $|x|^2=1$ on $\sph{d-1}$, different polynomials
when restricted to $\sph{d-1}$ can be equal, e.g., $f(x)$ and $|x|^2
f(x)$.  However, a spherical harmonic is the restriction of a unique
homogeneous harmonic polynomial on $\Reals^d$ and the two have the
same degree; see \cite{andrews:99:cup}, Theorem 9.4.1 or
\cite{stein:71:pup}, Corollary VI.2.4.

Recall that $L^2(\sph{d-1})$ is equipped with inner product $\ip f g =
\sint f \conj g\,\dd\omega$, where $\conj{\phantom{x}}$ stands for
complex conjugation.  Denote by $\Cal P_{j,d}$ the set of
polynomials of degree $j$ restricted to $\sph{d-1}$, and $\Cal
H_{j,d}$ that of spherical harmonics of degree $j$.  Both $\Cal
P_{j,d}$ and $\Cal H_{j,d}$ are finite dimensional subspaces of
$L^2(\sph{d-1})$.  From \cite{stein:71:pup}, p.~140 or \cite
{andrews:99:cup}, p.~450,
\begin{align} \label{e:dim-h}
  \dim(\Cal H_{j,d}) = c_{j,d} =
  \begin{cases}
    1, & j=0 \\
    \binom{j+d-2}{j} +
    \binom{j+d-3}{j-1}, & j>0,
  \end{cases}
\end{align}
$\Cal H_{j,d}\perp \Cal H_{j',d}$ for $j\ne j'$, $\Cal P_{k,d} =
\oplus^k_{j=0} \Cal H_{j,d}$ for $k\ge 0$, and $L^2(\sph{d-1}) = \oplus^\infty_{j=0} \Cal H_{j,d}$.

The projection from $L^2(\sph{d-1})$ to $\Cal H_{j,d}$ will be denoted 
by $\proj_j$.  For $f\in L^2(\sph{d-1})$, $\proj_j f$ will be referred
to as the $j$th spherical harmonic of $f$.  If $d\ge2$, then given
$j\ge 0$ and any real-valued
orthonormal basis $S_{j,i}$, $i=1,\cdots, c_{j,d}$, of $\Cal H_{j,d}$,
for $f\in L^2(\sph{d-1})$, $(\proj_j f)(u) = \sint
\sum^{c_{j,d}}_{i=1} S_{j,i}(u) S_{j,i}(v) f(v)\,\omega(\dd  v)$.  If
$d>2$, then by \cite{andrews:99:cup}, Theorem 9.6.3,
\begin{align} \label{e:proj}
  (\proj_j f)(u) = \frac{c_{j,d}}{A(\sph{d-1})}
  \sint
  \wt C^{(d-2)/2}_j(\ip u v) f(v)\, \omega(\dd v), \quad
  u\in\sph{d-1},
\end{align}
where $A(\sph{d-1}) = \lfrac{2\pi^{d/2}}{\Gamma(d/2)}$ is the surface
area of $\sph{d-1}$ and for $b>0$,
\begin{align*}
  \wt C^b_j(t) = \lfrac{C^b_j(t)}{C^b_j(1)}
\end{align*}
is an ultraspherical polynomial with $C^b_j(t)$ known as the Gegenbauer
polynomial of degree $j$ (\cite{andrews:99:cup}, p.~302).  The latter
can be written as
\begin{align} \label{e:Gegenbauer0}
  C^b_j(t) =\sum^{\Flr{j/2}}_{m=0} (-1)^m 2^{j-2m}
  \frac{(b)_{j-m}}{m! (j-2m)!} t^{j-2m}
\end{align}
(\cite{pivato:03:jma}, p.~233) and for $|t|\le1$, $|C^b_j(t)|\le
C^b_j(1) = \lfrac{(2b)_j}{j!}$, where for $z\in\Coms$ and
$n\in\Nats$, $(z)_0=1$ and $(z)_n= \prod^{n-1}_{m=0} (z+m)$.
For $d=2$, if $j\ge1$, then $C^0_j(t)\equiv0$, so $\wt C^0_j(t)$
cannot be defined as the ratio of $C^0_j(t)$ and $C^0_j(1)$.  However,
$\lim_{b\dto0} C^b_j(t)/C^b_1(t) = T_j(t)$ (\cite {andrews:99:cup},
Eq.~(6.4.13$'$)), where $T_j(t)$ is the Tchebyshev polynomial of the
first kind of degree $j$ defined by the formula
$T_j(\cos\theta) = \cos(j\theta)$ (\cite {andrews:99:cup}, p.~101).
Then \eqref {e:proj} still holds with $\wt C^0_j(t) = T_j(t)$
(cf.\ \cite{andrews:99:cup}, Remark 9.6.1).  It is useful to note that
\begin{align} \label{e:Gegenbauer}
  |\wt C^b_j(t)|\le1, \quad -1\le t\le 1,\ b\ge0,\ j\in\Ints_+.
\end{align}  

From \eqref{e:proj}, $\proj_j$ is an integral operator with kernel
$K(u,v) = \frac{c_{j,d}}{A(\sph{d-1})} \wt C^{(d-2)/2}_j(\ip u v)$.
Given $u\in\sph{d-1}$, the polynomial $\wt C^b_j(\ip u\cdot)$ is known
as a zonal harmonic with pole $u$ and belongs to $\Cal H_{j,d}$
(\cite  {andrews:99:cup}, p.~455--456).  The Funk-Hecke formula (\cite
{andrews:99:cup}, Theorem 9.7.1) states that, for any continuous
function $f$ on $[-1,1]$ and any $S\in \Cal H_{j,d}$, $j\in\Ints_+$,
$d\ge2$,
\begin{gather} \label{e:Funk-Hecke0}
  \begin{split}
    &
    \sint f(\ip u v) S(v)\,\omega(\dd v) = \lambda_{j,d}
    S(u), \quad u\in \sph{d-1}
    \\\text{with}~\lambda_{j,d}
    &= \lambda_{j,d}(f)
    =
    A(\sph{d-2}) \int^1_{-1} f(t) \wt C^{(d-2)/2}_j(t)
    (1-t^2)^{(d-3)/2}\, \dd t.
  \end{split}
\end{gather}

\subsection{Closed form of spherical harmonics of a
  polynomial} \label{ss:closedform}
Let $f$ be a polynomial.  Then $\wt C^{(d-2)/2}_j(\ip u v) f(v)$
can be written as a linear combination of a finite number of $\ip u
v^k v^c$ with explicit coefficients, where $k\in\Ints_+$ and $c\in
\Ints^d_+$.  By expanding $\ip u v^k v^c$ and integrating term by
term, \eqref {e:proj} can be written as
\begin{align*}
  (\proj_j f)(u)=
  \sum m_{a,b} \sint v^a \omega(\dd v)\cdot u^b,
\end{align*}
where the sum only has a finite number of terms, and for each pair
$a$, $b\in\Ints^d_+$, $m_{a,b}$ is an explicit number.  In polar
coordinates, $v_i =  \cos\theta_i\prod^{i-1}_{j=1} \sin\theta_j$ for
$i<d$ and $v_d = \prod^{d-1}_{j=1} \sin\theta_j$, where $\theta =
(\eno \theta {d-1})\in E = [0,\pi]^{d-2}\times [0,2\pi]$.  Then
(\cite {andrews:99:cup}, Eq.~(9.6.4))  \label{e:trigom}
\begin{gather*}
  \omega(\dd v) = \prod^{d-2}_{j =1} (\sin\theta_j)^{d-1-j}
  \,\dd\theta
  = (\sin\theta_1)^{d-2} \cdots (\sin\theta_{d-3})^2 \sin \theta_{d-2}
  \,\dd\theta_1\cdots\dd\theta_{d-2}\,\dd\theta_{d-1}
  \\
  \text{and}\qquad
  \sint v^a \omega(\dd v)
  =
  \int_E\prod^{d-1}_{j=1} (\sin\theta_j)^{p_j} (\cos\theta_j)^{q_j}\,
  \dd\theta
\end{gather*}
with $p_j = p_j(a)$ and $q_j = q_j(a)\in \Ints_+$.  Thus $\proj_j f$
can be found in closed form by trigonometric integration.
Unfortunately, if $f$ has a high degree or $j$ is large, the
calculation involves a large number of $\sint v^a \omega(\dd v)$ and
becomes tedious.

For example, let $d=3$ and $f(u) = f(u_1, u_2, u_3) = u^2_1$.
Then $\proj_j f=0$ for $j>2$.  To find $\proj_2 f$, from
\eqref{e:dim-h}, $c_{2,3} =5$ and from \eqref{e:Gegenbauer0}, $\wt
C^{1/2}_2(t) =(3t^2-1)/2$.  By \eqref{e:proj},
\begin{align*}
  (\proj_2 f)(u)
  =
  \frac5{4\pi} \int_{\sph2}
  \frac12[3(u_1 v_1 + u_2 v_2 + u_3 v_3)^2-1] v^2_1\,\omega(\dd v).
\end{align*}
As described above, let $v_1 = \cos\theta$, $v_2 = \sin\theta
\cos\phi$, and $v_3 =\sin\theta \sin\phi$, $\theta\in
[0,\pi]$, $\phi\in [0,2\pi]$.  Routine trigonometric integration gives $\int_{\sph2} v^4_1\, \omega(\dd
v) = \int^{2\pi}_0 \int^\pi_0 \cos^4\theta \sin\theta\, \dd\theta\,
\dd\phi = \lfrac{4\pi}5$,  $\int_{\sph2}
v^2_1\,\omega(\dd v) = \lfrac{4\pi}3$, and for $i=1,2$, $\int_{\sph2}
v^2_1 v^2_i \,\omega(\dd v) = \lfrac{4\pi}{15}$ and $\int_{\sph2}
v^2_1 v_2 v_3\,\omega(\dd v)= \int_{\sph2} v^3_1 v_i\,\omega(\dd v)
=0$.  Then $(\proj_2 f)(u) =(3u^2_1 + u^2_2 + u^2_3)/2 - \lfrac56$.
This may seem contrary to $\proj_2 f$ being a harmonic homogeneous
polynomial restricted to $\sph2$, but is in fact correct as $(\proj_2
f)(u)=(2 u^2_1 - u^2_2 - u^2_3)/3$ for $u\in\sph2$.  More calculation
yields $\proj_1 f=0$ and $\proj_0 f = 1/3$.

For this example, simpler calculation can be made using the Funk-Hecke
formula \eqref{e:Funk-Hecke0}.   Let $e=(1,0,0)$ and $g(t) = t^2$.
Then $(\proj_j f)(u) = (\lfrac5{4\pi}) \int_{\sph2} g(\ip e v)\wt
C^{1/2}_2(\ip u v) \,\omega(\dd v)$.   Given $u$, $\wt C^{1/2}_2(\ip
u\cdot)\in\Cal H_{2,3}$, so $(\proj_j f)(u) = (5/4\pi)
\lambda_{2,3}\wt C^{1/2}_2(\ip u e) = \lfrac{(3 u^2_1-1)}3$ with  $\lambda_{2,3} = A(\sph1) \int^{-1}_1 t^2[(3t^2-1)/2]\,\dd t= \lfrac{8\pi}{15}$.

\subsection{Other special functions}
For $\eno a p$, $\eno b q\in\Coms$, $p$, $q\ge0$, with
$b_i\not\in\{0,-1,-2, \ldots\}$, the series
\begin{align*}
  \sumzi k \frac{(a_1)_k \cdots (a_p)_k z^k}{(b_1)_k \cdots (b_q)_k
  k!},
\end{align*}
where $(a_1)_k \cdots (a_p)_k =1$ if $p=0$ and likewise for
$(b_1)_k\cdots (b_q)_k$, is known as the hypergeometric series and
denoted by ${_p}F_q(\eno a p; \eno b q; z)$ or
\begin{align*}
  {_p}F_q
  \Grp{
    \begin{matrix}
      \eno a p \\ \eno b q
    \end{matrix}; z
  }.
\end{align*}
The series is \ac for all $z\in\Coms$ if $p\le q$, and for $|z|<1$ if
$p=q+1$.   When $p=2$ and $q=1$, the analytic
continuation of the series $_2F_1(a,b; c; z)$ as a function of $z$ is
known as the hypergeometric function.  Gauss' formula states that
(\cite{andrews:99:cup}, Theorem 2.2.2)
\begin{align}\label{e:Gauss2F1}
  {_2}F_1
  \Grp{
    \begin{matrix}
      a, b \\ c
    \end{matrix}; 1
  } = \frac{\Gamma(c) \Gamma(c-a-b)}{\Gamma(c-a) \Gamma(c-b)} \quad
  \text{if~} \Re(c-a-b)>0.
\end{align}

Several classical formulas for the Gamma function will be used 
(\cite{andrews:99:cup}, Chapter 1):
\begin{alignat}{4} \label{e:L-dup}
  &\text{(Legendre's duplication)}&&\quad
  \Gamma\Grp{\frac z2} \Gamma\Grp{\frac{z+1}2} =
  2^{1-z} \Gamma(z)\sqrt \pi,
  \\\label{e:Euler}
  &\text{(Euler's reflection)} &&\quad
  \Gamma(z) \Gamma(1-z) = \frac\pi{\sin(\pi z)},
  \\\label{e:Stirling}
  &\text{(Stirling)}&&\quad
  \Gamma(z) \sim
  \sqrt{2\pi/z} (z/e)^z, \quad \Re z\toi.
\end{alignat}
From Stirling's formula, given $c\in\Reals$, $\Gamma(z+c)/\Gamma(z)
\sim z^c$.  Then from \eqref{e:dim-h}
\begin{align} \label{e:c-asym}
  c_{j,d} = O(j^{d-2}), \quad j\toi.
\end{align}
Since $\Gamma(z) = \intzi t^{z-1} e^{-t}\,\dd t$ if $\Re z>0$, by
H\"older's inequality, the Gamma function is log-convex on
$(0,\infty)$, i.e., given $c>0$, $\Gamma(z+c)/\Gamma(z)$ is
increasing in $z\in (0,\infty)$.  The reciprocal of the Gamma function
can be continuously extended into an entire function whose set of
zeros is exactly the set of negative integers; this continuation is
still denoted by $1/\Gamma(x)$ (\cite{andrews:99:cup}, p.~3).  Thus,
for functions of the form $f=g/\Gamma$, if $g$ has a finite value at
$-x$ with $x\in\Nats$, then $f$ is well defined at $-x$ with value
zero.

The Bessel function of the first kind of order $a$ is defined by
(\cite{andrews:99:cup}, Eq.~(4.5.3))
\begin{align} \label{e:Bessel}
  J_a(x) = \frac{(x/2)^a}{\Gamma(a+1)}
  \,{_0}F_1
  \Grp{
    \begin{matrix}
      \text{---} \\ a+1
    \end{matrix}; -(x/2)^2
  } =
  \sumzi k \frac{(-1)^k (x/2)^{2k+a}}{k! \Gamma(k+a+1)}.
\end{align}
It is known that (\cite{NIST:10}, 10.14.4)
\begin{gather}
  \label{e:Bessel3}
  |J_a(z)| \le \frac{|z/2|^a e^{|\Im z|}}{\Gamma(a+1)}, \quad a\ge
  -1/2, \ z\in\Coms.
\end{gather}

Finally, let $d\ge2$.  For $r>0$, $u,v\in\sph{d-1}$, and $S\in \Cal
H_{j,d}$, $j\ge0$, letting $b=d/2-1$,
\begin{align*}
  \sint e^{-\iunit r\ip u v} S(v)\,\omega(\dd v)
  = (2\pi)^{d/2} (-\iunit)^j S(u) \frac{J_{j+b}(r)}{r^b}.
\end{align*}
This is essentially Eq.~(9.10.2) in \cite{andrews:99:cup},
except that the latter incorrectly uses factor $\iunit^j$ instead of
$(-\iunit)^j$ on the  \rhs.  If $f\in L^2(\sph{d-1})$, then from $f =
\sumzi j \proj_j f$ and the above formula,
\begin{align} \label{e:ft-j}
  \sint e^{-\iunit r\ip u v} f(v)\,\omega(\dd v)
  =
  \frac{(2\pi)^{d/2}}{r^b}
  \sumzi j (-\iunit)^j J_{j+b}(r) (\proj_j f)(u).
\end{align}
In particular, for any $w\in\sph{d-1}$, since $\wt C^b_j(\ip w
\cdot)\in \Cal H_{j,d}$, then \eqref{e:proj} and \eqref{e:ft-j} 
yield
\begin{align}\nonumber
  (\proj_j e^{-\iunit r \ip u \cdot})(w)
  &=
    \frac{c_{j,d}}{A(\sph{d-1})}
    \sint
    e^{-\iunit r\ip u v} \wt C^{(d-2)/2}_j(\ip w v) \,\omega(\dd v)
  \\\label{e:FJ-proj}
  &=
    \frac{(2\pi)^{d/2}}{A(\sph{d-1})}  (-\iunit)^j c_{j,d}
    \wt C^b_j(\ip u w) \frac{J_{j+b}(r)}{r^b}.
\end{align}

\section{Calculation of characteristic exponent} \label{s:che}
Throughout the section, let $d\ge2$.  The main result of this section
is the following.
\begin{theorem} \label{t:che}
  Let the characteristic exponent $\Phi_\mu$ of an
  $\alpha$-stable distribution $\mu$ be given by \eqref{e:che-ss}.
  Suppose $\lambda(\dd v) = P(v)\,\omega(\dd v)$ with $P\in
  L^2(\sph{d-1})$.  Let $P_j = \proj_j P$, $j\in\Ints_+$.  Then 
  \begin{align}\label{e:che-ne1}
    \Phi_\mu(z)=
    |z|^\alpha \Sbr{\sum_{j\mathrm{~even}}
    w_j(\alpha) P_j(u_z)
    -\iunit\tan\frac{\pi\alpha} 2
    \sum_{j\mathrm{~odd}}
    w_j(\alpha) P_j(u_z)},
  \end{align}
  if $\alpha\ne1$, and 
  \begin{align} \label{e:che-eq1}
    \Phi_\mu(z)
    =
    |z|
    \Sbr{\sum_{j\mathrm{~even}} w_j(1) P_j(u_z)
    + \iunit\frac2\pi \sum_{j\mathrm{~odd}}
    w^*_j P_j(u_z)}
    + \iunit |z|\ln |z|
    \frac{2\pi^{d/2-1}}{\Gamma(d/2+1)}
    P_1(u_z)
  \end{align}
  if $\alpha=1$, where
  \begin{gather} \label{e:Funk-Hecke2}
    \begin{split}
      w_j(\alpha)
      &=
      \frac{\pi^{d/2} \Gamma(\alpha+1)}{2^{\alpha-1}
        \Gamma((j+\alpha+d)/2) \Gamma((\alpha-j)/2+1)}
      \\
      &=
      \frac{\pi^{d/2-1} \sin((j-\alpha)\pi/2)
        \Gamma(\alpha+1)\Gamma((j-\alpha)/2)}{2^{\alpha-1}
        \Gamma((j+\alpha+d)/2)},
    \end{split}
  \end{gather}
  and
  \begin{gather}  \label{e:che-eq1d}
    w^*_j=
    \begin{cases}
      \displaystyle
      \frac{\pi^{d/2}}{\Gamma(d/2+1)}\beta_d & j=1
      \\[1ex]
      \displaystyle
      \frac{(-1)^{(j-3)/2}\pi^{d/2}\Gamma((j-1)/2)}
      {2\Gamma((d+1+j)/2)} & j>1~\text{odd},
    \end{cases}
  \end{gather}
  where, letting $s_0=0$ and $s_n =
  s_{n-1} +1/n$ for $n\ge1$,
  \begin{align*}
    \beta_d =
    \begin{cases}
      1-\ln2 - s_{d/2}/2 & \text{~$d$ even,} \\
      1-s_{d+1} + s_{(d+1)/2}/2 & \text{~else}.
    \end{cases}
  \end{align*}
  The series in \eqref{e:che-ne1} and \eqref{e:che-eq1} are \ac. 
\end{theorem}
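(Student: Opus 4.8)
The plan is to compute the integral $\Phi_\mu(z)=|z|^\alpha\int_{\sph{d-1}}|\ip{u_z}v|^\alpha[1-\iunit\tan\frac{\pi\alpha}2\sgn{\ip{u_z}v}]P(v)\,\omega(\dd v)$ by expanding $P=\sum_j P_j$ and applying the Funk--Hecke formula \eqref{e:Funk-Hecke0} to each piece. Write $f_{\mathrm{e}}(t)=|t|^\alpha$ and $f_{\mathrm{o}}(t)=|t|^\alpha\sgn t=|t|^{\alpha-1}t$; then for $S\in\Cal H_{j,d}$ one gets $\int_{\sph{d-1}}f_{\mathrm{e}}(\ip uv)S(v)\,\omega(\dd v)=\lambda_{j,d}(f_{\mathrm{e}})S(u)$ and similarly with $f_{\mathrm o}$. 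Since $f_{\mathrm e}$ is even and $\wt C^{(d-2)/2}_j$ has parity $(-1)^j$, the factor $(1-t^2)^{(d-3)/2}$ being even forces $\lambda_{j,d}(f_{\mathrm e})=0$ for odd $j$; likewise $\lambda_{j,d}(f_{\mathrm o})=0$ for even $j$. This already produces the even/odd split in \eqref{e:che-ne1}. So the substance is the evaluation of the two one-dimensional integrals $\int_{-1}^1|t|^\alpha\wt C^{(d-2)/2}_j(t)(1-t^2)^{(d-3)/2}\,\dd t$ ($j$ even) and $\int_{-1}^1|t|^{\alpha-1}t\,\wt C^{(d-2)/2}_j(t)(1-t^2)^{(d-3)/2}\,\dd t$ ($j$ odd), and showing both give the single closed form $w_j(\alpha)$.

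For that evaluation I would first normalize: using $C^b_j(1)=(2b)_j/j!$ with $b=(d-2)/2$, and $A(\sph{d-2})=2\pi^{(d-1)/2}/\Gamma((d-1)/2)$, reduce to $\int_{-1}^1|t|^\alpha C^b_j(t)(1-t^2)^{b-1/2}\,\dd t$. Then expand $C^b_j$ via \eqref{e:Gegenbauer0} and integrate term by term: each $\int_{-1}^1|t|^{\alpha+j-2m}(1-t^2)^{b-1/2}\,\dd t$ is (by $t^2\mapsto s$) a Beta integral $B(\frac{\alpha+j-2m+1}2,b+\frac12)$. This gives a finite hypergeometric-type sum in $m$, which I expect to collapse to a $_2F_1$ evaluated at $1$; applying Gauss' formula \eqref{e:Gauss2F1} should yield the first expression in \eqref{e:Funk-Hecke2}. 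Alternatively — and more cleanly — one can cite a known integral for $\int_{-1}^1|t|^\sigma C^b_j(t)(1-t^2)^{b-1/2}\,\dd t$ from a table of Gegenbauer integrals; I would present the Beta-sum/Gauss route since the paper already has the needed tools. The second form of $w_j(\alpha)$ follows from the first by Euler's reflection \eqref{e:Euler} applied to $\Gamma((\alpha-j)/2+1)=\Gamma(1-(j-\alpha)/2)$, giving the $\sin((j-\alpha)\pi/2)\,\Gamma((j-\alpha)/2)$ factor; I should note the $1/\Gamma$-extension convention so that $w_j(\alpha)=0$ automatically when $(\alpha-j)/2+1$ is a nonpositive integer (i.e. $j-\alpha$ an even positive integer), consistent with $\alpha\ne1$ meaning no such coincidence creates a pole.

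For the $\alpha=1$ case the odd integral becomes $\int_{-1}^1 [|t|+\iunit\frac2\pi t\ln|t|]\wt C^{(d-2)/2}_j(t)(1-t^2)^{(d-3)/2}\,\dd t$. The $|t|$ part contributes only through even $j$ and gives $w_j(1)$ exactly as the limit of $w_j(\alpha)$ as $\alpha\to1$ (the formula is continuous there for even $j$). The $t\ln|t|$ part is handled by differentiating the odd-integral formula in $\alpha$ at $\alpha=1$: since $\frac{\dd}{\dd\alpha}|t|^{\alpha-1}t=t\ln|t|$ when $\alpha=1$, one has $\int_{-1}^1 t\ln|t|\,\wt C^{b}_j(t)(1-t^2)^{b-1/2}\,\dd t=\frac{\dd}{\dd\alpha}\big|_{\alpha=1}[\text{odd integral}]$. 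Differentiating the closed form for $w_j(\alpha)/(\text{the }\tan\text{ prefactor})$ and using $\psi=\Gamma'/\Gamma$ with the values $\psi(1)=-\gamma$, $\psi(1/2)=-\gamma-2\ln2$, $\psi(n+1)=-\gamma+s_n$, $\psi(n+1/2)=-\gamma-2\ln2+2(s_{2n}-s_n)$ produces the harmonic-number constants $\beta_d$ and $s_n$; the special case $j=1$ needs separate care because the pole of $\tan(\pi\alpha/2)$ at $\alpha=1$ interacts with a zero, leaving the logarithmic term $|z|\ln|z|$ with coefficient $\propto P_1(u_z)$. Finally, absolute convergence of both series: by \eqref{e:Gegenbauer} and \eqref{e:proj}, $\|P_j\|_\infty\le \frac{c_{j,d}}{A(\sph{d-1})}\,A(\sph{d-1})^{1/2}\|P_j\|_{L^2}$ grows polynomially in $j$ by \eqref{e:c-asym}, while Stirling's formula \eqref{e:Stirling} applied to $w_j(\alpha)$ in the form with $\Gamma((j+\alpha+d)/2)$ in the denominator shows $|w_j(\alpha)|$ decays faster than any polynomial (super-exponentially, like $(j/2)!^{-1}$ up to polynomial factors), and $\sum_j\|P_j\|_{L^2}^2=\|P\|_{L^2}^2<\infty$ controls the rest by Cauchy--Schwarz; the same bounds cover \eqref{e:che-eq1}.

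The main obstacle I anticipate is not the generic-$j$ Beta-sum, which should telescope to Gauss' formula routinely, but the $\alpha=1$ analysis at $j=1$: there the vanishing of $w_1(\alpha)$-type factors against the simple pole of $\tan(\pi\alpha/2)$ must be resolved by a careful Laurent expansion in $\alpha-1$, and it is this cancellation that simultaneously produces the $\iunit|z|\ln|z|$ term and the finite $w_1^*$ with its $\beta_d$. Getting the bookkeeping of digamma values and the $s_n$-constants exactly right (including the even-versus-odd $d$ split in $\beta_d$, which comes from $\psi$ at integer versus half-integer arguments) is the delicate part; everything else is assembling Funk--Hecke, Gauss, reflection, duplication, and Stirling in the order above.
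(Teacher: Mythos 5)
Your outline follows the paper's skeleton (parity split, Funk--Hecke, a closed-form evaluation of the one-dimensional integral, differentiation in $\alpha$ for the logarithmic part), but three of your key steps are either wrong or would fail as written. First, the decay rate you assign to $w_j(\alpha)$ is incorrect: $w_j(\alpha)$ does \emph{not} decay super-exponentially. In the first form of \eqref{e:Funk-Hecke2} the factor $1/\Gamma((\alpha-j)/2+1)$ is the reciprocal Gamma function at large negative arguments, and by Euler's reflection it equals $\pi^{-1}\sin((j-\alpha)\pi/2)\,\Gamma((j-\alpha)/2)$, which grows like the denominator $\Gamma((j+\alpha+d)/2)$ up to a power; by Stirling, $w_j(\alpha)=O(j^{-\alpha-d/2})$, i.e., only polynomial decay (and $w^*_j=O(j^{-1-d/2})$). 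With only polynomial decay, your sup-norm bound $\sup|P_j|\lesssim c_{j,d}\Norm{P_j}_{L^2}$ obtained from \eqref{e:proj} and Cauchy--Schwarz is too crude: $c_{j,d}=O(j^{d-2})$ and the resulting series need not converge when $d$ is large. One needs the sharper estimate $\sup_{\sph{d-1}}|P_j|^2\le \frac{c_{j,d}}{A(\sph{d-1})}\sint|P_j|^2\,\dd\omega$ (an addition-theorem consequence, the paper's Lemma \ref{l:sup-sp}), which gives growth $j^{(d-2)/2}$ and, combined with the correct rate $j^{-\alpha-d/2}$ and Cauchy--Schwarz against $\sum_j\Norm{P_j}^2_{L^2}=\Norm P^2_{L^2}$, yields the absolute convergence. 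Your argument only "works" because the erroneous super-exponential rate compensates for the weak sup bound.

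Second, your account of the $\alpha=1$ case misidentifies the source of the $\iunit|z|\ln|z|$ term. There is no limit or Laurent expansion in $\alpha-1$ and no pole of $\tan(\pi\alpha/2)$ to cancel: the $\alpha=1$ expression in \eqref{e:che-ss} is the given starting point, and the $|z|\ln|z|$ term arises purely from the algebraic split $\ln|\ip z v|=\ln|z|+\ln|\ip{u_z}v|$, the first piece contributing $|z|\ln|z|\,w_1(1)P_1(u_z)$ with $w_1(1)=\pi^{d/2}/\Gamma(d/2+1)\ne0$ (note $w_1(\alpha)$ does not vanish at $\alpha=1$; it is $w_j(1)$ for \emph{odd $j>1$} that vanishes, via the zero of $1/\Gamma$). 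Your differentiation-in-$\alpha$ device for $\int t\ln|t|\,\wt C^b_j(t)(1-t^2)^{b-1/2}\dd t$ is the right idea (it is the paper's $\wt L_j=\partial_a L_j$), but without the $\ln|z|$ split your computation cannot produce the $|z|\ln|z|$ term at all, and "differentiating $w_j(\alpha)$ divided by the $\tan$ prefactor" conflates the Funk--Hecke eigenvalue with the characteristic-exponent prefactor. Third, the central closed-form evaluation is only asserted: expanding $C^b_j$ by \eqref{e:Gegenbauer0} and integrating termwise produces a terminating sum whose term ratio is a degree-three rational function of $m$, i.e., a ${}_3F_2(1)$-type sum, not a plain Gauss ${}_2F_1(1)$; the paper deliberately avoids this route (it notes the termwise calculation "does not directly lead to the desired form") and instead writes $\wt C^b_j(t)=t^{\rx_j}{}_2F_1(-\Flr{j/2},\Flr{j/2}+\rx_j+b;b+1/2;1-t^2)$, integrates against the Beta weight, and only then applies Gauss and Legendre duplication. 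As written, your "expected collapse" leaves the main computation undone.
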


\begin{proof}[Remark]{\ }
  \begin{remarklist}
  \item
    If $\alpha=1$ and $\mu$ is strictly stable, then by
    \cite{sato:99:cup}, Theorem 14.10, $\sint v \lambda(\dd
    v)= \sint v P(v)\,\omega(\dd v)=0$.  Consequently, by
    \eqref{e:proj}, $P_1\equiv0$ in \eqref{e:che-eq1}.

  \item The two expressions in \eqref{e:Funk-Hecke2} are equal by
    Euler's reflection formula \eqref{e:Euler}.  Both are presented
    because either one may be more convenient to use in certain cases.

  \item
    If $P$ is a polynomial of degree $\dg$, then from Section
    \ref {s:prelim}, $P_j = \proj_j P = 0$ for $j>\dg$, and $P_j$
    with $j\le\dg$ can be explicitly calculated in closed form by
    \eqref{e:proj}.  As a result, $\Phi_\mu(z)$ can be obtained in
    closed form.  See Section \ref{s:example} for an example.
    \qedhere
  \end{remarklist}
\end{proof}

From \eqref{e:che-ss}, if $\mu$ has no shift and $\alpha\ne1$,
then
\begin{align} \label{e:che-ss3}
  \begin{array}{c}
    \Phi_\mu(z) = |z|^\alpha V(u_z),\\[1.5ex]
    \text{with~}\displaystyle
    V(u) =
    \sint |\ip u v|^\alpha\Sbr{1-\iunit\tan\frac{\pi\alpha} 2
      \sgn{\ip u v}}\! \lambda(\dd v).
  \end{array}
\end{align}
The following is a standard result.
\begin{lemma} \label{l:ReV}
  If $\mu$ has characteristic exponent \eqref{e:che-ss3}, then
  \begin{align*}
    \sup_{\sph{d-1}} |V| \le \frac{\lambda(\sph{d-1})}
    {|\cos(\pi\alpha/2)|} < \infty.
  \end{align*}
  and the distribution is  nondegenerate if and only if
  $\inf_{\sph{d-1}} \Re(V)>0$.
\end{lemma}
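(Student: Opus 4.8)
The bound on $\sup_{\sph{d-1}}|V|$ is immediate: for $u\in\sph{d-1}$ and $v\in\sph{d-1}$ we have $|\ip u v|\le 1$ by Cauchy--Schwarz, so $|\ip u v|^\alpha\le 1$. The bracketed factor $1-\iunit\tan(\pi\alpha/2)\,\sgn{\ip u v}$ has modulus $\sqrt{1+\tan^2(\pi\alpha/2)}=1/|\cos(\pi\alpha/2)|$ (using $\alpha\ne1$, so $\cos(\pi\alpha/2)\ne0$). Hence $|V(u)|\le \int_{\sph{d-1}} 1\cdot |\cos(\pi\alpha/2)|^{-1}\,\lambda(\dd v)=\lambda(\sph{d-1})/|\cos(\pi\alpha/2)|$, which is finite because $\lambda$ is a finite measure. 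Taking the supremum over $u$ gives the claim; equivalently, $\Phi_\mu(z)=|z|^\alpha V(u_z)$ is finite for every $z$, so $\FT\mu$ never vanishes.

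For the nondegeneracy equivalence, first note that $\Re(V(u))=\int_{\sph{d-1}} |\ip u v|^\alpha\,\lambda(\dd v)\ge 0$ always, and $\Re\Phi_\mu(z)=|z|^\alpha\,\Re(V(u_z))$. The plan is to recall the characterization of nondegeneracy: $\mu$ is degenerate precisely when its support is contained in a proper affine subspace $a+E$, and for a stable law with no shift this forces the support to lie in a proper linear subspace $E$; equivalently (see \cite{sato:99:cup}, and the discussion around \eqref{e:ift-ss}) $\mu$ is nondegenerate iff $\Re\Phi_\mu(z)\to\infty$ as $|z|\to\infty$ in every direction, iff $\Re\Phi_\mu$ does not vanish on any ray, iff $\inf_{\sph{d-1}}\Re(V)>0$. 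Concretely I would argue both directions through the support of $\lambda$: if $\inf_{\sph{d-1}}\Re(V)=0$, then since $u\mapsto\Re(V(u))$ is continuous (dominated convergence, using the uniform bound just established) on the compact sphere, the infimum is attained at some $u_0$, so $\int_{\sph{d-1}}|\ip{u_0}v|^\alpha\,\lambda(\dd v)=0$; as the integrand is nonnegative and continuous, $\lambda$ is supported on $\{v:\ip{u_0}v=0\}$, i.e.\ on the equator $u_0^{\perp}\cap\sph{d-1}$. This puts the whole Lévy measure of $\mu$ (which is the image of $\lambda\otimes(\text{radial part})$) inside the hyperplane $u_0^{\perp}$, and with no shift and no Gaussian part this forces $\mathrm{supp}(\mu)\subseteq u_0^{\perp}$, a proper subspace, so $\mu$ is degenerate. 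Conversely, if $\mu$ is degenerate, its support lies in some proper linear subspace $E$; picking a unit $u_0\perp E$, the Lévy measure is concentrated on $E\subseteq u_0^{\perp}$, forcing $\lambda$ to be concentrated on $\{v:\ip{u_0}v=0\}$, whence $\Re(V(u_0))=0$ and $\inf_{\sph{d-1}}\Re(V)=0$.

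The main obstacle is the careful bookkeeping in the degeneracy direction: translating "$\lambda$ concentrated on an equator" into "$\mathrm{supp}(\mu)$ in a hyperplane" and back. This is where one must invoke the precise structure of stable laws without shift — that the support of $\mu$ is determined by the support of its Lévy measure together with the (here absent) Gaussian component and shift — rather than anything computational. If one prefers to avoid quoting this, an alternative is to work directly with $\FT\mu$: $\mu$ is degenerate iff $\FT\mu(z)$ depends on $z$ only through its components along some proper subspace, iff $\Phi_\mu$ vanishes identically on the orthogonal complement, iff $\Re(V(u_0))=0$ for some $u_0$ (using $|\FT\mu|=e^{-\Re\Phi_\mu}$ and $\Re\Phi_\mu(z)=|z|^\alpha\Re V(u_z)$, noting $\Re\Phi_\mu\equiv0$ on the line through $u_0$ already forces $\Phi_\mu\equiv0$ there since $\Im\Phi_\mu$ is controlled by $\tan(\pi\alpha/2)\Re\Phi_\mu$ on that line); this route keeps everything on the Fourier side and sidesteps Lévy-measure arguments entirely. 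Either way, the continuity of $\Re(V)$ and the attainment of its infimum on the compact sphere, both consequences of the uniform bound, are the technical linchpin.
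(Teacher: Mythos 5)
Your proposal is correct, but note that the paper itself gives no proof of Lemma \ref{l:ReV}: it is introduced with ``The following is a standard result'' and left to the literature, so there is no in-paper argument to match. Your bound on $\sup_{\sph{d-1}}|V|$ is exactly the standard one (integrand modulus at most $1/|\cos(\pi\alpha/2)|$, $\lambda$ finite). For the nondegeneracy equivalence, of your two routes the Fourier-side one is the cleaner and is the one I would keep: since $|\Im V(u)|\le|\tan(\pi\alpha/2)|\,\Re V(u)$ pointwise from \eqref{e:che-ss3}, $\Re V(u_0)=0$ forces $V(u_0)=0$, hence $\FT\mu(tu_0)=1$ for all $t$, i.e.\ $\ip{u_0}X=0$ a.s.\ and $\mu$ is degenerate; conversely if $\mathrm{supp}(\mu)\subset a+E$ with $E$ proper, pick a unit $u_0\perp E$, so $\ip{u_0}X$ is a.s.\ constant, $|\FT\mu(tu_0)|\equiv1$, and $\Re V(u_0)=0$. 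Combined with the continuity of $\Re V$ (dominated convergence) and compactness of $\sph{d-1}$, which you correctly identify as the linchpin for attaining the infimum, this closes both directions without any appeal to the support theorem for infinitely divisible laws. Your primary route through the \levy measure also works, but it carries exactly the bookkeeping you flag (relating ``no shift'' in \eqref{e:che-ss} to zero drift/center in the triplet, and the support-of-$\mu$ versus support-of-$\nu$ statement), all of which the Fourier argument sidesteps; the one small slip there is the unqualified claim that degeneracy of a no-shift stable law forces the support into a \emph{linear} subspace --- true for $\alpha\ne1$ by strict stability, but unnecessary, since the modulus argument only needs $\ip{u_0}X$ to be a.s.\ constant.
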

From Theorem \ref{t:che}, if $\mu$ has a square-integrable spectral
spherical density $P$, then 
\begin{align} \label{e:V-sphere}
  V = \sumzi j a_j \proj_j P \quad
  \text{with}~a_j =
  \begin{cases}
    w_j(\alpha) & j~\text{even,}\\
    -\iunit\tan(\pi\alpha/2) w_j(\alpha)
    & \text{else.}
  \end{cases}
\end{align}
Moreover, the following is true.
\begin{cor} \label{c:che}
  Let $|z|^\alpha V(u_z)$ be the characteristic exponent of an
  $\alpha$-stable distribution $\mu$ with $\alpha\ne1$.  Then $\mu$
  has a square-integrable spectral spherical density if and
  only if $\sum j^{2\alpha+d} \Norm{\proj_j V}^2_{L^2(\sph{d-1})} <
  \infty$.
\end{cor}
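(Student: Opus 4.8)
The plan is to reduce the claim to a statement about the two-sided decay of the multipliers $a_j$ appearing in the expansion $V=\sum_j a_j\proj_j P$ from \eqref{e:V-sphere}. Since $\proj_j$ is an orthogonal projection and the $\Cal H_{j,d}$ are mutually orthogonal, applying $\proj_j$ to \eqref{e:V-sphere} gives $\proj_j V = a_j\proj_j P$ for every $j$; equivalently $\proj_j P = a_j^{-1}\proj_j V$ whenever $a_j\ne0$. Hence $\Norm{\proj_j P}^2_{L^2} = |a_j|^{-2}\Norm{\proj_j V}^2_{L^2}$, so $P\in L^2(\sph{d-1})$ — i.e.\ $\sum_j\Norm{\proj_j P}^2<\infty$ — is equivalent to $\sum_j |a_j|^{-2}\Norm{\proj_j V}^2<\infty$. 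The corollary then follows once I show $|a_j|^{-2}\asymp j^{2\alpha+d}$ as $j\toi$, together with a check that $a_j\ne0$ for all $j$ so that nothing is lost in the inversion (and that the finitely many small-$j$ terms contribute a harmless bounded factor).

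The heart of the matter is therefore the asymptotics of $w_j(\alpha)$. I would start from the first expression in \eqref{e:Funk-Hecke2},
\begin{align*}
  w_j(\alpha) = \frac{\pi^{d/2}\,\Gamma(\alpha+1)}{2^{\alpha-1}\,\Gamma\!\big((j+\alpha+d)/2\big)\,\Gamma\!\big((\alpha-j)/2+1\big)},
\end{align*}
and combine the two $j$-dependent Gamma factors. For the $\Gamma((\alpha-j)/2+1)$ factor, whose argument tends to $-\infty$, I use Euler's reflection \eqref{e:Euler} to rewrite $1/\Gamma((\alpha-j)/2+1)$ as $\Gamma((j-\alpha)/2)\sin\!\big((j-\alpha)\pi/2\big)/\pi$ — this is exactly the passage between the two forms in \eqref{e:Funk-Hecke2}. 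Then $w_j(\alpha)$ is, up to the bounded oscillating factor $\sin((j-\alpha)\pi/2)$ and $j$-independent constants, equal to $\Gamma((j-\alpha)/2)/\Gamma((j+\alpha+d)/2)$. By Stirling's formula \eqref{e:Stirling}, more precisely the consequence $\Gamma(z+c)/\Gamma(z)\sim z^c$ recorded after \eqref{e:Stirling}, with $z=(j-\alpha)/2$ and $c=(\alpha+d)/2-(-\alpha)/2=\alpha+d/2$, this ratio behaves like $(j/2)^{-(\alpha+d/2)}$. Hence $|w_j(\alpha)|\asymp j^{-(\alpha+d/2)}$ for even $j$; for odd $j$ the extra bounded factor $|\tan(\pi\alpha/2)|$ does not change the order, so $|a_j|\asymp j^{-(\alpha+d/2)}$ and $|a_j|^{-2}\asymp j^{2\alpha+d}$, as required.

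Two points need care and constitute the main obstacle. First, the oscillating factor $\sin((j-\alpha)\pi/2)$ does not vanish for any integer $j$ when $\alpha\notin\Ints$, and its modulus takes only the two values $|\sin(\alpha\pi/2)|$ and $|\cos(\alpha\pi/2)|$ along even/odd $j$; since $\alpha\in(0,2)\setminus\{1\}$ neither is zero, so $w_j(\alpha)\ne0$ for all $j$ and the ratio $|a_j|^{-2}/j^{2\alpha+d}$ stays bounded away from $0$ and $\infty$. When $\alpha$ is a positive even integer the hypothesis $\alpha\in(0,2)$ excludes $\alpha=2$, so this does not arise; if one wanted the statement for integer $\alpha$ in a wider range one would have to note that $\proj_j P$ is then forced to vanish for the affected $j$ and handle those degrees separately, but under the standing assumption $\alpha\ne1$, $\alpha\in(0,2)$ this is a non-issue. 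Second, I should make the two-sided bound uniform: from $|a_j|^{-2}\asymp j^{2\alpha+d}$ there are constants $0<c_1\le c_2<\infty$ with $c_1 j^{2\alpha+d}\le|a_j|^{-2}\le c_2 j^{2\alpha+d}$ for all large $j$, and enlarging/shrinking the constants to absorb the finitely many remaining $j$ (all with $a_j\ne0$) gives the equivalence $\sum_j |a_j|^{-2}\Norm{\proj_j V}^2<\infty \iff \sum_j j^{2\alpha+d}\Norm{\proj_j V}^2<\infty$ on the nose. Combined with the first paragraph this proves the corollary.
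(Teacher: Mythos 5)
Your forward direction and the asymptotics of the multipliers are fine and match the paper: from the second expression in \eqref{e:Funk-Hecke2} and Stirling one gets $|a_j|\asymp j^{-(\alpha+d/2)}$, the sine factor only oscillates between $|\sin(\pi\alpha/2)|$ and $|\cos(\pi\alpha/2)|$, both nonzero for $\alpha\in(0,2)\setminus\{1\}$, so indeed $a_j\ne0$ and $\sum_j|a_j|^{-2}\Norm{\proj_j V}^2<\infty \Iff \sum_j j^{2\alpha+d}\Norm{\proj_j V}^2<\infty$. So if $\mu$ has a square-integrable spectral spherical density $P$, Theorem \ref{t:che} gives \eqref{e:V-sphere}, hence $\proj_j P=a_j^{-1}\proj_j V$ and the sum is $\Norm{P}^2<\infty$.

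The genuine gap is the converse. Your reduction ``$P\in L^2 \Iff \sum_j|a_j|^{-2}\Norm{\proj_j V}^2<\infty$'' presupposes that a spectral spherical \emph{density} $P$ exists and that the expansion \eqref{e:V-sphere} holds for it; but \eqref{e:V-sphere} is only established under the hypothesis that $\mu$ already has an $L^2$ density, which is exactly what must be proved in this direction. For a general $\alpha$-stable $\mu$ the spectral spherical measure $\lambda$ can be singular (e.g.\ a finite sum of point masses), in which case there is no $P$ at all, yet $V$ and all the $\proj_j V$ are still perfectly well defined; your argument never uses the hypothesis $\sum_j j^{2\alpha+d}\Norm{\proj_j V}^2<\infty$ to \emph{produce} a density. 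This is where the bulk of the paper's proof lives: under that hypothesis one defines $Q=\sum_j a_j^{-1}\proj_j V$, which converges in $L^2(\sph{d-1})$, proves $Q$ is real-valued (Lemma \ref{l:Q}), forms the finite signed measure $\wt\lambda=Q\,\omega$ and the associated signed \levy-type measure via \eqref{e:Levy}, checks (by applying the Theorem \ref{t:che} computation to $\wt\lambda$ and splitting into positive and negative parts) that it reproduces the same characteristic exponent $-|z|^\alpha V(u_z)$, and then invokes the uniqueness of the \levy measure of an infinitely divisible distribution to conclude $\nu=\wt\nu$, hence $\lambda=Q\,\omega$, so $Q$ is the sought square-integrable density. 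Without some argument of this kind (identifying the candidate $L^2$ function $\sum_j a_j^{-1}\proj_j V$ with the actual spectral measure through uniqueness), the ``if'' half of the corollary is not proved.
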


Let $X\sim\mu$.  Given $u\in \sph{d-1}$, $Y=\ip u X$ is univariate
$\alpha$-stable.  Since its characteristic exponent $\Phi_X(t)$ is
equal to $\Phi_\mu(tu)$, from Theorem \ref{t:che},
\begin{align*} 
  \Phi_Y(t) =
  \begin{cases}
    \makebox[10.4cm][l]{$
      \displaystyle
      |t|^\alpha \Sbr{\sum_{j\mathrm{~even}}
        w_j(\alpha) P_j(u)
        -\iunit\tan\frac{\pi\alpha} 2\sgn t
        \sum_{j\mathrm{~odd}}
        w_j(\alpha) P_j(u)}$,}\quad\alpha\ne1\\
    \displaystyle
    |t|
    \Sbr{\sum_{j\mathrm{~even}} w_j(1) P_j(u)
      + \iunit \ln |t|\frac{2\pi^{d/2-1}}{\Gamma(d/2+1)} P_1(u)}
    \!+ \iunit t\frac2\pi \sum_{j\mathrm{~odd}}
    w^*_j P_j(u), \\
    \makebox[10.4cm]{}\quad\alpha=1
  \end{cases}
\end{align*}
For different $\alpha$, while $\Phi_Y$ is different, it always
satisfies the characterization in \cite{sato:99:cup}, Theorem 14.15.
It follows that the spherical harmonics of $P$ satisfy 
\begin{align} \label{e:proj-ineq}
  \sum_{j \rm{~even}} w_j(\alpha) P_j(u)  \ge
  \Abs{\sum_{j\rm{~odd}} w_j(\alpha) P_j(u)}
\end{align}
for all $\alpha\in (0,2)$ and $u\in \sph{d-1}$.  Equality can hold for
some $u\in \sph{d-1}$, in which case the \levy measure of $\ip u X$ is
concentrated on a half line according to the proof of Theorem 14.15 of
\cite{sato:99:cup}.  On the other hand, the result below holds.  For
$z\in\Coms$, denote by $\arg z$ the principal argument of $z$,
i.e., the unique $\theta\in(-\pi, \pi]$ with $z=|z| e^{\iunit
  \theta}$.
\begin{cor} \label{c:proj-ineq}
  Let $P\ne0$ be a polynomial.
  \begin{enumerate}[itemsep=0ex, parsep=0ex, leftmargin=3ex,
    topsep=.4ex, label=\arabic*)]
  \item \label{i:proj1}
    Strict inequality holds in \eqref{e:proj-ineq}.
  \item \label{i:proj2}
    If $\alpha\ne1$, then for $V$ in \eqref{e:che-ss3}, $\sup_{u\in
      \sph{d-1}} |\arg V(u)|< (\pi/2)\min(\alpha, 2-\alpha)$.
  \end{enumerate}
\end{cor}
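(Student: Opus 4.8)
The plan is to reduce both parts to a single elementary fact about the two ``half--space masses'' of $\lambda$. For $u\in\sph{d-1}$ put
\[
  A_\pm(u)=\int_{\{\pm\ip u v>0\}}|\ip u v|^\alpha P(v)\,\omega(\dd v)\ \ge\ 0 .
\]
Splitting the integral defining $V$ in \eqref{e:che-ss3} according to the sign of $\ip u v$ (the set $\{\ip u v=0\}$ contributes nothing, as $|\ip u v|^\alpha=0$ there) gives, for $\alpha\ne1$,
\[
  V(u)=\bigl(A_+(u)+A_-(u)\bigr)-\iunit\tan\tfrac{\pi\alpha}2\bigl(A_+(u)-A_-(u)\bigr).
\]
Since $P$ is real and each $P_j=\proj_jP$ is real--valued, comparison with \eqref{e:V-sphere} identifies $\Re V(u)=\sum_{j\text{ even}}w_j(\alpha)P_j(u)=A_+(u)+A_-(u)$ and, as $\tan(\pi\alpha/2)\ne0$, $\sum_{j\text{ odd}}w_j(\alpha)P_j(u)=A_+(u)-A_-(u)$. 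The same two identities hold for $\alpha=1$: comparing \eqref{e:che-eq1} with the real part of \eqref{e:che-ss} gives $\sum_{j\text{ even}}w_j(1)P_j(u)=\sint|\ip u v|P(v)\,\omega(\dd v)$, while by \eqref{e:Funk-Hecke2} the factor $1/\Gamma((\alpha-j)/2+1)$ makes $w_j(1)=0$ for every odd $j\ge3$, so $\sum_{j\text{ odd}}w_j(1)P_j(u)=w_1(1)P_1(u)$, which by a direct evaluation (using $\wt C^{(d-2)/2}_1(t)=t$ and $c_{1,d}=d$ in \eqref{e:proj}) equals $\sint\ip u v\,P(v)\,\omega(\dd v)$. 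In every case these two quantities are just $\sint|\ip u v|^\alpha P(v)\,\omega(\dd v)$ split by the sign of $\ip u v$, i.e.\ $A_+(u)+A_-(u)$ and $A_+(u)-A_-(u)$.

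Given this, \eqref{e:proj-ineq} reads $A_+(u)+A_-(u)\ge|A_+(u)-A_-(u)|$, a triviality, with equality precisely when $A_+(u)A_-(u)=0$. So the first part follows once we know that a nonzero polynomial $P\ge0$ forces $A_\pm(u)>0$ for every $u$, and this is the one genuinely non--routine point. The argument: on the connected manifold $\sph{d-1}$, $P$ is a nonzero real--analytic function, hence its zero set is closed with empty interior, so $\{P>0\}$ (which equals $\{P\ne0\}$ as $P\ge0$) is open and dense; the open hemisphere $\{v:\pm\ip u v>0\}$ is nonempty and therefore meets $\{P>0\}$ in a nonempty open set on which $|\ip u v|^\alpha P(v)>0$, giving $A_\pm(u)>0$. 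In particular $\Re V(u)=A_+(u)+A_-(u)>0$, so $V(u)\ne0$ and $\arg V(u)\in(-\pi/2,\pi/2)$ is well defined (consistent with Lemma~\ref{l:ReV}).

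For the second part, set $t(u)=\bigl(A_+(u)-A_-(u)\bigr)/\bigl(A_+(u)+A_-(u)\bigr)$; by the positivity just proved, $t(u)\in(-1,1)$. From the displayed formula for $V(u)$ and $\Re V(u)>0$,
\[
  |\arg V(u)|=\Bigl|\arctan\!\bigl(\tan\tfrac{\pi\alpha}2\,t(u)\bigr)\Bigr|
  =\arctan\!\bigl(\bigl|\tan\tfrac{\pi\alpha}2\bigr|\,|t(u)|\bigr).
\]
Using the elementary identity $\arctan\bigl|\tan(\pi\alpha/2)\bigr|=\tfrac\pi2\min(\alpha,2-\alpha)$ for $\alpha\in(0,2)\setminus\{1\}$, monotonicity of $\arctan$ together with $|t(u)|<1$ gives the pointwise strict inequality. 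For the uniform bound, observe that $(u,v)\mapsto\bigl(\max(\ip u v,0)\bigr)^\alpha$ and $(u,v)\mapsto\bigl(\max(-\ip u v,0)\bigr)^\alpha$ are continuous, so $u\mapsto A_\pm(u)$, and hence $u\mapsto|t(u)|$, are continuous on the compact set $\sph{d-1}$; being $<1$ everywhere, $|t|$ attains a maximum $<1$, whence $\sup_u|\arg V(u)|=\arctan\!\bigl(|\tan(\pi\alpha/2)|\max_u|t(u)|\bigr)<\tfrac\pi2\min(\alpha,2-\alpha)$.

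The only step demanding real care is the positivity claim $A_\pm(u)>0$: that is where the hypothesis ``$P$ is a nonzero polynomial'' is used, through density of $\{P>0\}$ in $\sph{d-1}$. Everything else is bookkeeping — the sign--split of the integral, matching real and imaginary parts with \eqref{e:V-sphere}, the vanishing and value of the $w_j(1)$ — plus the trigonometric identity for $\arctan|\tan(\pi\alpha/2)|$ and a routine compactness argument.
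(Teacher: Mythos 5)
Your proposal is correct. For part 2) it is essentially the paper's own argument: both write $\arg V(u)$ as an arctangent of $\tan(\pi\alpha/2)$ times the ratio of the odd part to the even part of the integral, note that part 1) makes this ratio continuous with absolute value strictly below $1$, and upgrade to a uniform bound by compactness of $\sph{d-1}$ together with $\arctan|\tan(\pi\alpha/2)|=(\pi/2)\min(\alpha,2-\alpha)$.

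For part 1) you take a genuinely different route. The paper argues probabilistically: by the characterization behind \eqref{e:proj-ineq} (Sato, Theorem 14.15), equality at some $u$ forces the \levy measure of the projection $\ip u X$ to be concentrated on a half line; the relation \eqref{e:Levy} between the \levy and spectral measures then gives $\lambda(\{v:\ip u v>0\})=0$, i.e.\ $P=0$ on an open hemisphere, and finally $P\equiv0$ because $P$ is a polynomial. You bypass Sato's theorem entirely: the Funk--Hecke identities from the proof of Theorem \ref{t:che} (equivalently your matching of real and imaginary parts with \eqref{e:V-sphere}, plus the parity of $P_j$) identify the even and odd sums in \eqref{e:proj-ineq} with $A_++A_-$ and $A_+-A_-$, so the inequality becomes the triviality $a+b\ge|a-b|$ for $a,b\ge0$ with equality iff $A_+A_-=0$, and strictness reduces to $A_\pm(u)>0$, which you get from the nonzero, nonnegative, real-analytic density $P$ having an open dense positivity set that meets every open hemisphere. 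Both proofs ultimately rest on the same endgame (a polynomial vanishing on an open hemisphere vanishes identically, which your identity-theorem argument justifies cleanly); what yours buys is a self-contained, purely analytic derivation that also re-proves \eqref{e:proj-ineq} directly for polynomial densities, including the correct reduction of the odd sum to $w_1(1)P_1$ when $\alpha=1$, while the paper's route exhibits the equality case as half-line concentration of the \levy measure, a structurally informative byproduct. One small point to keep explicit: your argument uses $P\ge0$, which is automatic here since $\lambda=P\omega$ is a nonnegative measure, but it is worth stating.
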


\subsection{Proof of Theorem \ref{t:che}}
Recall that $\lambda = P\omega$ is a finite measure.  By assumption,
$P\in L^2(\sph{d-1})$.  Since $P_j = \proj_j P$, then $P = \sumzi j
P_j$ in $L^2(\sph{d-1})$.  Write
\eqref{e:che-ss} as
\begin{align} \label{e:che-ss2}
  \Phi_\mu(z)
  =
  \begin{cases}
    \displaystyle
    \sint |\ip z v|^\alpha\,\lambda(\dd v)
    -
    \iunit\tan\frac{\pi\alpha} 2
    \sint |\ip z v|^\alpha\,\sgn{\ip z v}\,
    \lambda(\dd v) & \alpha\ne1,
    \\[2ex]
    \displaystyle\sint
    |\ip z v|\,\lambda(\dd v) + \iunit \frac{2}{\pi}
    \sint \ip z v\ln|\ip z v|\,\lambda(\dd v)
    & \alpha=1.
  \end{cases}
\end{align}
Since $|\ip z v| = |z|\cdot |\ip {u_z} v|$ is bounded and symmetric in
$v$, by $P_j(-v) = (-1)^j P_j(v)$,
\begin{align*}
  \sint |\ip z v|^\alpha \lambda(\dd v)
  &=
    |z|^\alpha \sum_{j\text{~even}}
    \sint |\ip {u_z} v|^\alpha P_j(v)\,\omega(\dd v),\\
  \sint |\ip z v|^\alpha \sgn\ip z v\,\lambda(\dd v)
  &=
    |z|^\alpha \sum_{j\text{~odd}}
    \sint |\ip {u_z} v|^\alpha \sgn\ip{u_z} v\,
    P_j(v)\,\omega(\dd v).
\end{align*}
For $j\in\Ints_+$, let $\rx_j = \cf{j\text{~is odd}}$.  The next step
is to evaluate 
\begin{align*}
  \sint |\ip u v|^\alpha (\sgn\ip u v)^{\rx_j}
  P_j(v)\,\omega(\dd v).
\end{align*}

The following derivation applies to \emph{all\/}
$\alpha\in(0,\infty)$.  Since the mapping $t\mapsto |t|^\alpha
(\sgn t)^{\rx_j}$ is continuous, by Funk-Hecke formula \eqref {e:Funk-Hecke0}, for all $h\in \Cal H_{j,d}$ and $u\in  \sph{d-1}$,
\begin{gather} \label{e:Funk-Hecke}
  \sint |\ip u v|^\alpha (\sgn\ip uv)^{\rx_j} h(v)
  \omega(\dd v)  = w_j(\alpha) h(u),
  \intertext{where} \label{e:F-H-G}
  w_j(\alpha)
  =
  A(\sph{d-2})
  \int^1_{-1} |t|^\alpha (\sgn t)^{\rx_j}
  \wt C^{(d-2)/2}_j(t) (1-t^2)^{(d-3)/2}\,\dd t.
\end{gather}
Other than the exact values of $w_j(\alpha)$, \eqref{e:che-ne1}
follows immediately from \eqref{e:che-ss2} and \eqref{e:Funk-Hecke} if 
$\alpha\ne1$.  With similar consideration, it can be expected that
\eqref{e:che-eq1} holds as well.  Thus, the task is to show that
$w_j(\alpha)$ and $w^*_j$ are given by \eqref{e:Funk-Hecke2} and
\eqref{e:che-eq1d}, respectively.

From the polynomial expression of $\wt C^{(d-2)/2}_j(t)$, it is
possible to obtain a closed form of $w_j(\alpha)$ from \eqref
{e:F-H-G} by integration term by term.  However, this calculation does
not directly lead to the desired form of $w_j(\alpha)$.  Instead,
by \cite{andrews:99:cup}, Exercise 6.28, if $b>0$, then
\begin{align*} 
  \wt C^b_j(t)
  = t^{\rx_j} {_2}F_1\Grp{
  \begin{matrix}
    -\Flr{j/2}, \Flr{j/2}+\rx_j +b\\b+1/2
  \end{matrix}; 1-t^2
  }
\end{align*}
and by continuity, the identity still holds if $b=0$.  Then by
\eqref{e:F-H-G},
\begin{align}  \label{e:L-abj}
  w_j(\alpha) = A(\sph{d-2}) L_j(\alpha, (d-2)/2),
\end{align}
where for $a>0$, $b\ge 0$, and $j\in\Ints_+$,
\begin{align*}
  L_j(a,b)
  =
  \int^1_{-1} |t|^{a +\rx_j} {_2}F_1\Grp{
  \begin{matrix}
    -\Flr{j/2}, \Flr{j/2} +\rx_j +b\\b+1/2
  \end{matrix}; 1-t^2
  } (1-t^2)^{b-1/2}\,\dd t.
\end{align*}

To evaluate $L_j(a,b)$,  by change of variable $s =
1-t^2$,  \label{e:cv-L} 
\begin{align*}
  L_j(a,b)
  =
  \int^1_0 s^{b-1/2}\,(1-s)^{(a+\rx_j-1)/2}\,
  {_2}F_1\Grp{
  \begin{matrix}
    -\Flr{j/2}, \Flr{j/2}+\rx_j+b\\b+1/2
  \end{matrix}; s
  } \dd s.
\end{align*}
By \cite{andrews:99:cup}, Theorem 2.2.4, for $p_i$, $q_i\in\Coms$
with $\Re q_i>0$, $i=1,2$, and $x\in \Coms\setminus [1,\infty)$,
\begin{align*}
  \int^1_0 s^{q_1-1} (1-s)^{q_2-1} 
  {_2}F_1\Grp{
  \begin{matrix}
    p_1, p_2\\q_1
  \end{matrix}; xs
  }\,\dd s
  = B(q_1, q_2)\, {_2}F_1\Grp{
  \begin{matrix}
    p_1, p_2\\q_1+q_2
  \end{matrix}; x
  },
\end{align*}
where $B(q_1, q_2) = \lfrac{\Gamma(q_1)\Gamma(q_2)}{\Gamma(q_1 +
  q_2)}$ is the Beta function.  If $p_1$ or $p_2$ is a nonpositive
integer, then on each side of the display the ${_2}F_1$ function is a
polynomial of finite degree, and so the identity holds for all
$x\in\Coms$, in particular, for $x=1$.  Then
\begin{align*}
  L_j(a, b) = B(b+1/2, (a + \rx_j+1)/2)\,
  {_2}F_1\Grp{
  \begin{matrix}
    -\Flr{j/2}, \Flr{j/2}+\rx_j+b\\b+(a + \rx_j)/2+1
  \end{matrix}; 1
  }.
\end{align*}
By Gauss' formula \eqref{e:Gauss2F1}, 
\begin{align*}
  L_j(a,b)
  &=
    B(b+1/2, (a + \rx_j+1)/2)
  \\
  &\quad\times
    \frac{\Gamma(b+(a+\rx_j)/2+1)
    \Gamma((a-\rx_j)/2+1)}
    {\Gamma(b+(a+\rx_j)/2+1+ \Flr{j/2})
    \Gamma((a-\rx_j)/2+1-\Flr{j/2})}
  \\
  &=
    \frac{\Gamma(b+1/2)}{\Gamma(b+(a+j)/2+1)\Gamma((a-j)/2+1)}
    \times
    \Gamma((a+1)/2)\Gamma(a/2+1),
\end{align*}
the second equality due to $\Gamma((a + \rx_j+1)/2) \Gamma((a -
\rx_j)/2+1) = \Gamma((a+1)/2) \Gamma(a/2+1)$ and $\rx_j/2 + \Flr{j/2}
= j/2$.  Then by Legendre's duplication formula \eqref{e:L-dup},
\begin{align} \label{e:L-abj2}
  L_j(a,b)
  =
  \frac{\sqrt\pi\,\Gamma(b+1/2) \Gamma(a+1)}
  {2^a\Gamma(b+(a+j)/2+1)\Gamma((a-j)/2+1)}.
\end{align}
By \eqref{e:L-abj}, the first expression in \eqref{e:Funk-Hecke2}
follows.  Then by Euler's reflection formula \eqref{e:Euler}, the
second expression in \eqref{e:Funk-Hecke2} follows.  This proves
\eqref{e:che-ne1} for $\alpha\ne1$ and also yields the terms $w_j(1)
P_j(u_z)$ in \eqref{e:che-eq1} with $j$ being even for $\alpha=1$.

For $\alpha=1$, it only remains to evaluate the second integral on the
\rhs of \eqref{e:che-ss2}.  First,
\begin{align*}
  \sint \ip z v\ln|\ip z v| \lambda(\dd v)
  &=|z|\sint \ip {u_z} v(\ln|z| + \ln|\ip {u_z} v|)\lambda(\dd v)
  \\
  =|z|\ln|z|
  &\sint \ip{u_z} v \lambda(\dd v) +|z| \sint \ip{u_z} v
    \ln|\ip{u_z} v|\,\lambda(\dd v).
\end{align*}
Given $z\ne0$, $\ip{u_z} \cdot\in \Cal H_{1,d}$, so the first integral
on the \rhs equals $\sint \ip{u_z} v P_1(v)\,\omega(\dd v)$.  Then by
$\ip u v = |\ip u v|\sgn{\ip u v}$,  \label{e:eq1-uv}
\begin{gather*}
  \sint \ip z v\ln|\ip z v| \lambda(\dd v)
  =|z|\ln |z| w_1(1) P_1(u_z) + |z| I(u_z),
  \intertext{where}
  I(u)
  =
  \sint \ip u v \ln  |\ip u v| \,
  \lambda(\dd v)
  =
  \sum_{j\text{~odd}}
  \sint \ip u v \ln  |\ip u v| P_j(v)\, \omega(\dd v).
\end{gather*}

From the first expression in \eqref{e:Funk-Hecke2},
\begin{align}\label{e:che-eq1a} 
  w_1(1) = \frac{\pi^{d/2}}{\Gamma(d/2+1)}.
\end{align}
Following the proof for $\alpha\ne1$, for each odd $j\ge1$, there is a
constant $w^*_j$ such that
\begin{align*}
  \sint \ip u v \ln  |\ip u v| P_j(v)\,
  \omega(\dd v)
  = w^*_j P_j(u),
\end{align*}
which together with the last two displays yields
\begin{align}\label{e:che-eq1c}
  \sint \ip z v\ln|\ip z v| \lambda(\dd v)
  =
  |z|\ln |z| \frac{\pi^{d/2}}{\Gamma(d/2+1)} P_1(u_z) +
  |z| \sum_{j\text{~odd}} w^*_j P_j(u_z).
\end{align}
Then by \eqref{e:che-ss2}, \eqref{e:che-eq1} holds.  The task now is
to show \eqref{e:che-eq1d} for odd $j\ge1$.  Following the proof of
\eqref{e:L-abj}, $w^*_j = A(\sph{d-2}) \wt L_j(1, (d-2)/2)$, where for
$a>0$ and $b\ge0$, 
\begin{align*}
  \wt L_j(a,b) = 
  \int^1_{-1} |t|^{a+1} \ln|t|\,{_2}F_1\Grp{
  \begin{matrix}
    -\Flr{j/2}, \Flr{j/2} +1 +b\\b+1/2
  \end{matrix}; 1-t^2
  } (1-t^2)^{b-1/2}\,\dd t.
\end{align*}
By dominated convergence, $\wt L_j(a,b) = \lfrac{\partial
  L_j(a,b)}{\partial a}$.  Then by \eqref{e:L-abj2},
\begin{align} \nonumber
  \wt L_j(a,b)
  &=
    [- \ln 2 + \psi(a+1) - \psi(b+(a+j)/2+1)/2]
    L_j(a,b)
  \\\label{e:Ltilde}
  &\quad+ 
    \frac{\sqrt\pi \Gamma(b+1/2) \Gamma(a+1)}{2^{a+1}
    \Gamma(b+(a+j)/2+1)} g'((a-j)/2+1),
\end{align}
where $\psi(x) = \lfrac{\Gamma'(x)}{\Gamma(x)}$ and $g(x)
=\lfrac1{\Gamma(x)}$.  Let $a=1$ and $b=(d-2)/2$.  Multiply both sides
of \eqref{e:Ltilde} by $A(\sph{d-2}) = \frac{2\pi^{(d-1)/2}}{\Gamma((d
  -1)/2)}$.  Then
\begin{align*}
  w^*_j = [- \ln 2 + \psi(2) - \psi((d+1+j)/2)/2] w_j(1) +
  \frac{\pi^{d/2}}{2 \Gamma((d+1+j)/2)} g'((3-j)/2).
\end{align*}
From \cite{andrews:99:cup}, p.~13, for $n\in\Ints_+$, $\psi(n+1) =
-\gamma + s_n$, $\psi(n+1/2) = -\gamma - 2\ln 2 + 2 s_{2n} - s_n$,
where $\gamma$ is Euler's constant and $s_n = s_{n-1} + 1/n$ for
$n\ge1$ with $s_0=0$.  Then $\psi(2) = 1 - \gamma$ and $g'(1) =
-\psi(1)/\Gamma(1) = \gamma$, which together with \eqref{e:che-eq1a}
yields
\begin{align*}
  w^*_1 = \Sbr{-\ln 2 + 1- \frac{\gamma+\psi(d/2+1)}2}
  \frac{\pi^{d/2}}{\Gamma(d/2+1)}.
\end{align*}
The expression of $\psi(d/2+1)$ in terms of $s_n$'s then yields
$w^*_1$ in \eqref{e:che-eq1d}.  For $j>1$ odd, since $(j-1)/2$ is a
positive integer, from the first expression in \eqref{e:Funk-Hecke2},
$w_j(1)=0$.  This can also be derived using Rodrigues formula
(\cite{andrews:99:cup}, Eq.~(6.4.14)): for any $b$,
\begin{align*}
  C^b_j(t) (1-t^2)^{b-1/2}
  = \frac{(-2)^j (b)_j}{j!(j+2b)_j} [(1-t^2)^{b+j-1/2}]
  \Sp j.
\end{align*}
Then $w_j(1)=0$ for $d>2$ follows from \eqref{e:F-H-G} and integration
by parts.  The case $d=2$ can be shown by continuity argument or using
the property of Tchebyshev polynomial $\wt C^0_j(t)=T_j(t)$ (cf.\
\cite{andrews:99:cup}, p.~101).

On the other hand, $g(x) = \pi^{-1} \sin(\pi x)\Gamma(1-x)$ by Euler's
reflection formula \eqref {e:Euler}.  Then $g'(x) = \cos(\pi x)
\Gamma(1-x) - \pi^{-1} \sin(\pi x) \Gamma'(1-x)$ and so for
$n\in\Ints_+$, $g'(-n) = (-1)^n\Gamma(n+1)$, which together with
\eqref{e:che-eq1a} gives $w^*_j$ for odd $j>1$ in \eqref{e:che-eq1d}.
This shows \eqref{e:che-eq1} for $\alpha=1$. 
  
It only remains to show that \eqref{e:che-ne1} and \eqref
{e:che-eq1} are \ac.  It suffices to show
\begin{align} \label{e:wp}
  \sumzi j |w_j(\alpha)| \sup_{\sph{d-1}} |P_j|<\infty, \quad
  \sumzi j |w^*_j| \sup_{\sph{d-1}} |P_j|<\infty.
\end{align}

\begin{lemma} \label{l:sup-sp}
  For $j\ge1$ and $P\in \Cal H_{j,d}$,
  \begin{align*}
    \sup_{\sph{d-1}} |P|^2 \le \frac{c_{j,d}}{A(\sph{d-1})}
    \sint |P|^2\,\dd\omega,
  \end{align*}
  where $c_{j,d} = \dim(\Cal H_{j,d})$ is specified in
  \eqref{e:dim-h}.
\end{lemma}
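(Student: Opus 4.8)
The plan is to exploit the reproducing-kernel structure of the orthogonal projection $\proj_j$. By \eqref{e:proj}, $\proj_j$ is the integral operator on $L^2(\sph{d-1})$ with kernel $K(u,v) = \frac{c_{j,d}}{A(\sph{d-1})}\wt C^{(d-2)/2}_j(\ip u v)$, where for $d=2$ one reads $\wt C^0_j = T_j$. Since $P\in\Cal H_{j,d}$ we have $\proj_j P = P$, so \eqref{e:proj} gives the pointwise representation
\[
  P(u) = \sint K(u,v)\,P(v)\,\omega(\dd v), \qquad u\in\sph{d-1},
\]
and, all functions in sight being continuous on the compact sphere, every integral appearing below is finite.

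First I would apply the Cauchy--Schwarz inequality in $L^2(\sph{d-1})$ to this representation, obtaining
\[
  |P(u)|^2 \le \Grp{\sint |K(u,v)|^2\,\omega(\dd v)}\Grp{\sint |P|^2\,\dd\omega}, \qquad u\in\sph{d-1}.
\]
It then suffices to show that the first factor on the right equals $c_{j,d}/A(\sph{d-1})$ for every $u$, and to take the supremum over $u$. This evaluation is really the only point that needs a genuine argument; the main thing to watch is keeping the $d=2$ case in step with the rest via the substitution $\wt C^0_j = T_j$.

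To carry out the evaluation, observe that for each fixed $u$ the function $v\mapsto K(u,v)$ is a constant multiple of the zonal harmonic with pole $u$, hence lies in $\Cal H_{j,d}$, and it is real-valued. Applying \eqref{e:proj} to $f = K(u,\cdot)$ and using $\proj_j f = f$ yields the reproducing identity $\sint K(w,v)\,K(u,v)\,\omega(\dd v) = K(u,w)$ for all $w\in\sph{d-1}$. Setting $w=u$ and using $\wt C^b_j(1)=1$ for every $b\ge0$ (in particular $T_j(1)=1$ when $d=2$) gives $\sint |K(u,v)|^2\,\omega(\dd v) = K(u,u) = c_{j,d}/A(\sph{d-1})$, as required; alternatively the same value follows directly from the Funk--Hecke formula \eqref{e:Funk-Hecke0} applied to $f = \wt C^{(d-2)/2}_j$. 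Combining this with the Cauchy--Schwarz bound and taking $\sup_{u\in\sph{d-1}}$ completes the proof (and one sees the bound is sharp, with equality for zonal harmonics).
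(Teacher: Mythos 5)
Your proof is correct, but it follows a different route from the paper's. The paper normalizes $P$ to $\varphi_1 = P/\sigma$, extends it to a real orthonormal basis $\varphi_1,\ldots,\varphi_{c_{j,d}}$ of $\Cal H_{j,d}$, bounds $|\varphi_1|^2$ by $\sum_i |\varphi_i|^2$, and then invokes the addition theorem (cited as Theorem 9.6.3 of Andrews--Askey--Roy, or Corollary IV.2.9(b) of Stein--Weiss) to identify that sum with the constant $c_{j,d}/A(\sph{d-1})$. You instead work directly with the kernel $K(u,v)=\frac{c_{j,d}}{A(\sph{d-1})}\wt C^{(d-2)/2}_j(\ip u v)$: since $\proj_j P = P$, you write $P(u)=\sint K(u,v)P(v)\,\omega(\dd v)$, apply Cauchy--Schwarz, and evaluate $\sint K(u,v)^2\,\omega(\dd v)=K(u,u)=c_{j,d}/A(\sph{d-1})$ by applying \eqref{e:proj} to the zonal harmonic $K(u,\cdot)\in\Cal H_{j,d}$ and using $\wt C^b_j(1)=1$ (with the $T_j$ convention covering $d=2$). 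The two arguments are dual views of the same reproducing-kernel fact, since $K(u,v)=\sum_i\varphi_i(u)\varphi_i(v)$ for any real orthonormal basis; what your version buys is self-containedness relative to the facts already recorded in Section \ref{ss:sh} (no need to extend $P$ to an orthonormal basis or to cite the addition theorem separately), plus the observation that the constant is sharp, attained by zonal harmonics. One small caveat: your parenthetical claim that the value also follows ``directly'' from Funk--Hecke \eqref{e:Funk-Hecke0} with $f=\wt C^{(d-2)/2}_j$ still requires knowing $\lambda_{j,d}=A(\sph{d-1})/c_{j,d}$, i.e.\ the $L^2$ normalization of the Gegenbauer polynomial, so it is not quite a shortcut; but this is only an aside and does not affect your main argument.
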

\begin{proof}
  Let $\sigma^2 = \sint |P|^2\,\dd\omega$ and $\varphi_1 = P/\sigma$.
  Then $\varphi_1\in\Cal H_{j,d}$ and there are $\varphi_2$, \ldots,
  $\varphi_{c_{j,d}} \in \Cal H_{j,d}$, such that together with
  $\varphi_1$ they form an orthonormal basis of $\Cal H_{j,d}$.  Then
  by \cite {andrews:99:cup}, Theorem 9.6.3 or \cite{stein:71:pup},
  Corollary IV.2.9(b),
  \begin{align*}
    \sup_{\sph{d-1}} |P|^2
    =
    \sigma^2 \sup_{\sph{d-1}} |\varphi_1|^2
    \le
    \sigma^2 \sup_{\sph{d-1}} \sum^{c_{j,d}}_{i=1} |\varphi_i|^2
    =
    \sigma^2 \cdot\frac{c_{j,d}}{A(\sph{d-1})}.  \tag*{\qedhere}
  \end{align*}
\end{proof}
Let $\sigma^2_j= \sint |P_j|^2\,\dd\omega$.  Then $\sumzi j\sigma^2_j=
\sigma^2 = \sint |P|^2\,\dd\omega$.  By Cauchy--Schwarz inequality
and Lemma \ref{l:sup-sp},
\begin{align*}
  \Grp{\sumzi j |w_j(\alpha)| \sup_{\sph{d-1}}|P_j|}^2
  &\le \Sbr{\sumzi j \frac{\sup_{\sph{d-1}} |P_j|^2}{c_{j,d}}}
    \Sbr{\sumzi j c_{j,d} w_j(\alpha)^2}    
  \\
  &\le \Sbr{\sumzi j \frac{\sigma^2_j}{A(\sph{d-1})}}
    \Sbr{\sumzi j c_{j,d} w_j(\alpha)^2}
    =\frac{\sigma^2}{A(\sph{d-1})}\sumzi j c_{j,d} w_j(\alpha)^2.
\end{align*}
From the second expression in \eqref{e:Funk-Hecke2} and Stirling's
formula \eqref{e:Stirling}, as $j\toi$, $w_j(\alpha) =
O(j^{-\alpha-d/2})$.  Then by \eqref{e:c-asym}, $c_{j,d} w_j(\alpha)^2
= O(j^{-2-2\alpha})$, which implies the first half of \eqref{e:wp}.
Next, from \eqref {e:che-eq1d}, $w^*_j = O(j^{-1-d/2})$, so by similar
argument, the second half of \eqref{e:wp} follows.

\subsection{Proof of other results}
\begin{proof}[Proof of Corollary \ref{c:che}]
Let $\lambda$ be the spectral spherical measure of $\mu$.  First
show that $\lambda$ has a density in $L^2(\sph{d-1})$ if and only if
$\Delta=\sumzi j |a_j|^{-2} \Norm{\proj_jV}^2<\infty$, where
$\Norm\cdot$ denotes  $\Norm\cdot_{L^2(\sph{d-1})}$ for simplicity
and $a_j$ are defined in \eqref{e:V-sphere}.  Notice that since
$\alpha\ne1$, $a_j\ne0$ for all $j\in\Ints_+$.  Also, since $V$ is
bounded by Lemma \ref{l:ReV}, it is clearly in $L^2(\sph{d-1})$. 

Suppose $\lambda = P\omega$ with $P\in L^2(\sph{d-1})$.  Then by
\eqref{e:V-sphere}, $a^{-1}_j \proj_j V = \proj_j P$,
so $\Delta = \sumzi j \Norm{\proj_j P}^2 = \Norm P^2 < \infty$.
Conversely, suppose $\Delta < \infty$.  Then $\sumzi j a^{-1}_j
\proj_j V$ converges in $L^2(\sph{d-1})$.  Let $Q$ be the limit.
We need the following result, which is also useful later.
\begin{lemma} \label{l:Q}
  $Q$ is real-valued on $\sph{d-1}$.
\end{lemma}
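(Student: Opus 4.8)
The plan is to read off the parity structure of $V$ and match it against the coefficients $a_j$. The starting point is the symmetry
\begin{align*}
  \overline{V(u)} = V(-u), \qquad u\in\sph{d-1},
\end{align*}
which follows directly from the formula for $V$ in \eqref{e:che-ss3}: the measure $\lambda$, the factor $|\ip u v|^\alpha$, and $\tan(\pi\alpha/2)$ are all real, while $\sgn{\ip{-u}v}=-\sgn{\ip u v}$, so conjugating $V(u)$ just flips the sign in front of the imaginary part, which is exactly the effect of replacing $u$ by $-u$. (Equivalently, $\overline{\FT\mu(z)}=\FT\mu(-z)$ forces $\overline{\Phi_\mu(z)}=\Phi_\mu(-z)$, hence $\overline{V(u)}=V(-u)$.) Since each $\proj_j V$ is a spherical harmonic of degree $j$, one has $(\proj_j V)(-u)=(-1)^j(\proj_j V)(u)$, so summing $V=\sum_j\proj_j V$ over even and over odd $j$ separately shows that the even part $\sum_{j\text{ even}}\proj_j V=\Re V$ is real-valued and the odd part $\sum_{j\text{ odd}}\proj_j V=\iunit\,\Im V$ is $\iunit$ times a real-valued function.

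Next I would use that $\proj_j$ is represented by a real kernel — the kernel $\tfrac{c_{j,d}}{A(\sph{d-1})}\wt C^{(d-2)/2}_j(\ip u v)$ in \eqref{e:proj} when $d>2$, its Tchebyshev counterpart when $d=2$, or, uniformly in $d$, the kernel $\sum_i S_{j,i}(u)S_{j,i}(v)$ built from a real orthonormal basis of $\Cal H_{j,d}$. Hence $\proj_j$ commutes with complex conjugation and maps real-valued functions to real-valued functions. Applied to the decomposition above, this yields: $\proj_j V$ is real-valued for every even $j$, and $\proj_j V=\iunit R_j$ with $R_j$ real-valued for every odd $j$.

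Finally I would invoke \eqref{e:V-sphere}. For even $j$, $a_j=w_j(\alpha)$ is a nonzero real number (real and nonzero by \eqref{e:Funk-Hecke2}, using $\alpha\ne1$); for odd $j$, $a_j=-\iunit\tan(\pi\alpha/2)\,w_j(\alpha)$ is $\iunit$ times the nonzero real number $-\tan(\pi\alpha/2)\,w_j(\alpha)$. Hence $a_j^{-1}\proj_j V$ is real-valued for every $j$: a real multiple of a real-valued function when $j$ is even, and a purely imaginary scalar times the purely imaginary-valued function $\iunit R_j$ when $j$ is odd. Consequently every partial sum $\sum_{j\le n}a_j^{-1}\proj_j V$ is real-valued on $\sph{d-1}$, and since these partial sums converge to $Q$ in $L^2(\sph{d-1})$, a subsequence converges $\omega$-almost everywhere to $Q$; hence $Q$ is real-valued. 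There is no genuine obstacle: the only point needing care is the bookkeeping that the parity of $a_j^{-1}$ (real for even $j$, imaginary for odd $j$) is exactly complementary to the imaginary content of $\proj_j V$, so that the quotient is always real — which is precisely the $L^2$ shadow of the identity $Q=P$, nonnegative, that holds whenever $\mu$ actually has a spectral spherical density.
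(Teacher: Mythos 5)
Your proof is correct and is essentially the paper's argument: both rest on the symmetry $V(-u)=\conj{V(u)}$, the facts that $\proj_j$ has a real kernel (so commutes with conjugation) and satisfies $\proj_j(f(-\cdot))=(-1)^j\proj_j f$, and the observation from \eqref{e:V-sphere} that $a_j$ is real for even $j$ and purely imaginary for odd $j$. The paper packages this as ``$(-\iunit)^j\proj_j f$ is real whenever $\conj f(u)=f(-u)$'' while you phrase it through the even/odd decomposition of $V$ into $\Re V$ and $\iunit\,\Im V$, a purely cosmetic difference; your extra remark on passing from real partial sums to the $L^2$ limit $Q$ via an a.e.\ convergent subsequence is a fine (and slightly more careful) finishing touch.
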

\begin{proof}
  Let $S_j = (-\iunit)^j \proj_j (V)$, $j\in\Ints_+$.  From $V(-u) =
  \conj{V(u)}$, $S_j$ is real-valued.    Indeed, in general, suppose
  $f\in L^2(\sph{d-1})$ and define $f^*\in L^2(\sph{d-1})$ such that
  $f^*(u) = f(-u)$.   Then from \eqref{e:proj}, $\proj_j (f^*)
  = (-1)^j \proj_j f$ and $\conj{\proj_j f} = \proj_j \conj f$.  If
  $f^* = \conj f$, then $\conj{(-\iunit)^j \proj_j f} = \iunit^j
  \conj{\proj_j f} = \iunit^j \proj_j(f^*) = (-\iunit)^j \proj_j f$,
  so $(-\iunit)^j \proj_j f$ is real-valued.  Now $Q = \sum (\iunit^j
  a^{-1}_j) S_j$.  From \eqref{e:V-sphere}, $\iunit^j a^{-1}_j$ is
  real-valued for all $j\in\Ints_+$.  Then $Q$ is real-valued.
\end{proof}
  
Continuing the proof of Corollary \ref{c:che}, since by
Cauchy--Schwarz inequality $\sint |Q|\,\dd\omega\le \{A(\sph{d-1})
\sint |Q|^2\,\dd\omega\}^{1/2} < \infty$, $\wt \lambda= Q\,\omega$ is
a finite (signed) measure on $\sph{d-1}$.  Denote by $\nu$ the \levy
measure of $\mu$.  From Remark 14.6 and the proof of Theorem 14.10 of \cite{sato:99:cup},
\begin{align} \label{e:Levy}
  \nu(B) =
  \frac{1}{C_\alpha}
  \sint \lambda(\dd v) \int_{r>0} \cf{r v\in B}
  r^{-1-\alpha} \,\dd r,
\end{align}
where $C_\alpha = (\pi/2)[\sin(\pi\alpha/2)\Gamma(1+\alpha)]^{-1}$.
Define measure $\wt\nu(B)$ similarly, but with $\wt\lambda$ replacing
$\lambda$.  Let $\wt\lambda_+ = Q_+\omega$ and $\wt\lambda_- =
Q_-\omega$, where for $c\in\Reals$, $c_\pm = \max(0, \pm c)$.
Define measures $\wt\nu_+$ and $\wt\nu_-$ in terms of $\wt\lambda_+$
and $\wt\lambda_-$, respectively.  Then $\wt\nu = \wt\nu_+ -
\wt\nu_-$.  Since $\wt\nu_+(\{0\})=0$ and
\begin{align*}
  \rint \min(|x|^2,1)\,\wt\nu_+(\dd x)
  \le
  \frac{1}{C_\alpha}
  \sint |Q|\,\omega(\dd v) \int_{r>0} \frac{\min(r^2,1)}{r^{1+\alpha}}
  \,\dd r < \infty,
\end{align*}
$\wt\nu_+$ is a \levy measure.  Likewise, $\wt\nu_-$ is a \levy
measure.

Let $0<\alpha<1$.  Similar to the above display, $\rint\min(|x|, 1)\,
\wt\nu_\pm(\dd x)<\infty$.   Then the argument in the proof of Theorem
14.10 in \cite{sato:99:cup} can be applied to get
\begin{align*}
  \rint (e^{\iunit\ip z x}-1) \wt\nu(\dd x)
  &=
    \nth{C_\alpha}
    \sint \wt\lambda(\dd v) \int_{r>0}(e^{\iunit |z|\ip {u_z} v}-1)
    r^{-1-\alpha} \,\dd r\\
  &=
    -|z|^\alpha \sint |\ip {u_z}  v|^\alpha[1 - \iunit
    \tan(\lfrac{\pi\alpha}{2}) \sgn(\ip{u_z} v)]\, \wt\lambda(\dd v),
\end{align*}
where $C_\alpha$ is as in \eqref{e:Levy}.  The argument for Theorem \ref{t:che} can be directly applied to $\wt\lambda$.  Thus,
\begin{align*}
  \rint (e^{\iunit\ip z x}-1)\, \wt\nu(\dd x)
  =
  -|z|^\alpha \sumzi j a_j (\proj_j Q)(u_z).
\end{align*}
Then by $a_j \proj_j Q = a_j \proj_j (\sum a^{-1}_i \proj_i V) = 
\proj_j V$ in $L^2(\sph{d-1})$,
\begin{align*}
  \rint (e^{\iunit\ip z x}-1)\, \wt\nu(\dd x)
  =
  -|z|^\alpha \sumzi j (\proj_j V)(u_z)
  =
  -|z|^\alpha V(u_z) = \rint (e^{\iunit \ip z x} -1)
  \,\nu(\dd x).
\end{align*}
Note that in general the first two equalities only hold for a.e.\ $z$.
However, by continuity in $z$, the integrals on both ends equal for
all $z$.  It follows that
\begin{align*}
  \rint (e^{\iunit\ip z x}-1)\, \wt\nu_+(\dd x)
  = \rint (e^{\iunit \ip z x} -1)\,(\nu+\wt\nu_-)(\dd x).
\end{align*}
From the proof of uniqueness of the \levy measure for an infinitely
divisible distribution (cf.\ the one for Theorem 8.1(ii) in \cite
{sato:99:cup}), $\wt\nu_+ = \nu + \wt\nu_-$.  Then $\nu = \wt\nu$.  As
a result, $\lambda =\wt \lambda = Q\omega$, so $Q$ is the spectral
spherical density of $\nu$.  The case $1<\alpha<2$ can be similarly
proved.
 
Finally, from \eqref{e:V-sphere}, as $j\toi$, $a_j\asymp w_j(\alpha)$,
i.e., they have the same order.  From the second expression in
\eqref{e:Funk-Hecke2} $w_j(\alpha)\asymp \Gamma(\frac{j - \alpha} 2)/
\Gamma(\frac{j + \alpha+d}2)$.  Then by \eqref{e:Stirling}, $a_j\asymp
j^{-(\alpha+d/2)}$.  As a result, $\Delta<\infty \Iff \sum
j^{2\alpha+d} \Norm{\proj_j V}^2 < \infty$, completing the proof.
\end{proof}

\begin{proof}[Proof of Corollary \ref{c:proj-ineq}]
  \ref{i:proj1}
  Fix $\alpha\in (0,2)$.  If $X\sim\mu$ has \levy measure $\nu$,
  then for $u\in\sph{d-1}$, $\ip u X$ has \levy measure $\nu_1(B)
  = \nu(\{x{:}~\ip u x\in B\})$.  If equality holds for $u$ in
  \eqref{e:proj-ineq}, then $\nu_1$ is concentrated on a half line, so
  that, say $\nu_1((0,\infty)) = \nu(\{x{:}~\ip u x>0\})=0$.  Then
  from \eqref{e:Levy}, $\lambda(\{v\in \sph{d-1}{:}~\ip u v>0\})=0$,
  giving $P(v)=0$ for all $v\in \sph{d-1}$ with $\ip u v>0$.  Since
  $P$ is a polynomial, this implies $P\equiv0$, a contradiction.

  \ref{i:proj2} From Lemma \ref{l:ReV} and \ref{i:proj1},
  \begin{align*}
    \arg V=\arg[1 - \iunit r(V)
    \tan(\lfrac{\pi\alpha}{2})]
    =
    \begin{cases}
      -\arctan[r(V)\tan \frac{\pi\alpha}{2}] & \text{if~}
      \alpha\in (0,1)
      \\
      \arctan[r(V) \tan(\pi - \frac{\pi\alpha}{2})] & \text{if~}
      \alpha\in (1,2)
    \end{cases}
  \end{align*}
  where $r(V)=\Im(V)/\Re(V)$ is continuous on $\sph{d-1}$ with maximum
  absolute value strictly less than 1.  Then the claim follows.
\end{proof}

\section{Density when $\alpha\in (0,1)$}  \label{s:pdf(0,1)}
In this section, let $\mu$ be a nondegenerate $\alpha$-stable
distribution with $\alpha\in (0,1)$.  Without loss of generality,
assume $\mu$ has no shift, so that its characteristic exponent is
$\Phi_\mu(z) = |z|^\alpha V(u_z)$ as in \eqref{e:che-ss3}.  From
Lemma \ref{l:ReV}, $V^p$ is bounded for any $p\in\Reals$, where the
principal branch of the power function is used when $p$ is a
noninteger.  Denote
\begin{align} \label{e:Sjp}
  S_{j,p} = (-\iunit)^j \proj_j (V^p), \quad j\in\Ints_+.
\end{align}
Then $S_{j,p}\in\Cal H_{j,d}$ and
\begin{align}\label{e:V-SH}
  V^p
  = \sumzi j \iunit^j S_{j,p} \text{~in $L^2(\sph{d-1})$}.
\end{align}
Since $[V(-u)]^p = \conj{[V(u)]^p}$, from the proof of Lemma
\ref{l:Q}, it is seen that $S_{j,p}$ is real-valued.

\begin{theorem} \label{t:pdf-lt1}
  Let $\mu$ be an $\alpha$-stable distribution on $\Reals^d$, $d\ge2$,
  with $\alpha\in(0,1)$ and characteristic exponent \eqref{e:che-ss3}.
  If $\lambda = P\omega$ with $0\ne P\in \Cal P_{\dg,d}$ for some
  $\dg\in\Ints_+$, then $\FT\mu\in L^1(\Reals^d)$ and the \pdf of
  $\mu$ is
  \begin{align} \label{e:mvs-pdf<1}
    g(x)
    =
    \sumoi k \frac{(-2^\alpha)^k \pi^{-d/2}}{k! |x|^{k\alpha+d}}
    \sum^{k\dg}_{j=0}
    \frac{\Gamma((j+k\alpha+d)/2)}{\Gamma((j-k\alpha)/2)}
    S_{j,k}(u_x), \quad x\ne0.
  \end{align}
  The series is uniformly \ac in $\{x{:}~|x|\ge r\}$ for any $r>0$.
\end{theorem}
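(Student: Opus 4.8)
The plan is to start from the inversion formula \eqref{e:ift-ss} and substitute the characteristic exponent $\Phi_\mu(z) = |z|^\alpha V(u_z)$, then expand $e^{-\Phi_\mu(z)} = e^{-|z|^\alpha V(u_z)}$ as a power series in $|z|^\alpha$. Since $P$ is a polynomial, $V = \sum_{j=0}^{\dg} a_j P_j$ is bounded (Lemma \ref{l:ReV}), so $\Re(V) \ge c > 0$ uniformly on $\sph{d-1}$ by nondegeneracy, whence $|e^{-|z|^\alpha V(u_z)}| \le e^{-c|z|^\alpha}$. Because $\alpha \in (0,1)$ one needs to check that $\FT\mu \in L^1(\Reals^d)$: in polar coordinates $\int_{\Reals^d} |\FT\mu(z)|\,\dd z \le A(\sph{d-1}) \int_0^\infty e^{-c s^\alpha} s^{d-1}\,\dd s < \infty$, which also justifies that $g$ is given pointwise by the absolutely convergent integral \eqref{e:ift-ss}. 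Writing $e^{-|z|^\alpha V(u_z)} = \sum_{k\ge 0} \frac{(-1)^k}{k!} |z|^{k\alpha} V(u_z)^k$ and substituting into \eqref{e:ift-ss}, the term-by-term integration over the radial variable produces $\int_0^\infty s^{k\alpha + d - 1} e^{-\iunit s \ip{u_x}{v}}$-type integrals (after also expanding the angular exponential), so the main analytic identity needed is a radial integral of the form $\int_0^\infty s^{k\alpha + d - 1} \frac{J_{j+b}(s|x|)}{(s|x|)^b}\,\dd s$ evaluated via \eqref{e:ft-j}.

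More precisely, I would organize the angular integral using \eqref{e:ft-j}: since $V^k \in L^2(\sph{d-1})$ (indeed it is a bounded function, and a polynomial when $P$ is, of degree at most $k\dg$), write $V(u_z)^k = \sum_{j=0}^{k\dg} \iunit^j S_{j,k}(u_z)$ with $S_{j,k} = (-\iunit)^j \proj_j(V^k)$ as in \eqref{e:Sjp}--\eqref{e:V-SH}. Then
\begin{align*}
  \rint e^{-\iunit \ip x z} |z|^{k\alpha} V(u_z)^k\,\dd z
  = \intzi s^{k\alpha + d - 1} \Sbr{\sint e^{-\iunit s \ip{u_x}{v}} V(v)^k\,\omega(\dd v)}\dd s,
\end{align*}
and by \eqref{e:ft-j} with $b = d/2 - 1$ the inner bracket equals $\frac{(2\pi)^{d/2}}{(s)^b}\sum_j (-\iunit)^j J_{j+b}(s) (\proj_j V^k)(u_x)$ — one must be slightly careful that here the Fourier variable is $sx$, not $su_x$, so I would instead apply \eqref{e:ft-j} in the form $\sint e^{-\iunit |x| s \ip{u_x} v} V(v)^k \omega(\dd v)$ and then do the radial integral in $s$ with a factor $|x|^{-(k\alpha+d)}$ pulled out after the substitution $t = |x| s$. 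The key closed-form evaluation is the Hankel-type integral $\intzi t^{k\alpha + d - 1 - b} J_{j+b}(t)\,\dd t$, which is a standard Mellin transform of the Bessel function: $\intzi t^{\mu - 1} J_\nu(t)\,\dd t = 2^{\mu-1}\Gamma((\nu+\mu)/2)/\Gamma((\nu-\mu)/2 + 1)$, valid for $-\nu < \Re\mu < 3/2$. With $\mu = k\alpha + d - b = k\alpha + d/2 + 1$ and $\nu = j + b = j + d/2 - 1$, this gives the $\Gamma$-quotient $\Gamma((j + k\alpha + d)/2)/\Gamma((j - k\alpha)/2 + 1)$ up to a power of $2$; after absorbing the factors $(2\pi)^{d/2}$, $(2\pi)^{-d}$, $(-\iunit)^j \iunit^j = 1$, and $2^{k\alpha + d/2}$, and using $\Gamma((j-k\alpha)/2+1) = \frac{j - k\alpha}{2}\Gamma((j-k\alpha)/2)$ — wait, more directly, the stated formula has $\Gamma((j-k\alpha)/2)$ in the denominator, so one reindexes via the reflection/recurrence appropriately — one should recover exactly the summand $\frac{(-2^\alpha)^k \pi^{-d/2}}{k!\,|x|^{k\alpha+d}}\cdot\frac{\Gamma((j+k\alpha+d)/2)}{\Gamma((j-k\alpha)/2)} S_{j,k}(u_x)$.

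The main obstacle, and the step I would spend the most care on, is justifying the interchange of $\sum_k$ (the exponential series), $\int_0^\infty \dd s$ (the radial integral), and $\sum_j$ / $\sint \omega(\dd v)$ (the angular expansion). The angular expansion $V^k = \sum_j \iunit^j S_{j,k}$ is a finite sum since $P$ is a polynomial, so that part is harmless. For the radial interchange I would first swap the finite angular sum with $\int \dd s$, then handle $\sum_k$ against $\int \dd s$: the bound $|e^{-t^\alpha V(v)}| \le e^{-ct^\alpha}$ does not directly dominate the term-by-term integrands because the series $\sum_k \frac{t^{k\alpha}}{k!}\|V\|_\infty^k$ diverges pointwise in $t$ only subexponentially — actually it converges, to $e^{t^\alpha \|V\|_\infty}$, so $\sum_k \frac{1}{k!}\intzi t^{k\alpha+d-1} |V(v)|^k |J_{\cdots}| \cdots$ must be shown finite; using $|J_a(t)| \le \const\, t^{-1/2}$ for large $t$ and $|J_a(t)| \le |t/2|^a/\Gamma(a+1)$ from \eqref{e:Bessel3} for small $t$, each radial integral converges and one gets a bound like $\sum_k \frac{C^k \Gamma(k\alpha + d/2 + 1/2)}{k!}$ or similar, which converges because $\alpha < 1$ makes $\Gamma(k\alpha + \const)/k! \to 0$ geometrically. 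This is the crux: the condition $\alpha < 1$ is exactly what makes the Fubini/Tonelli justification go through, and it is also what gives the uniform absolute convergence in $\{|x| \ge r\}$, since then $|x|^{-(k\alpha+d)} \le r^{-(k\alpha+d)}$ and the same $\Gamma$-ratio-over-factorial bound (together with $\sup_{\sph{d-1}} |S_{j,k}| \le \sqrt{c_{j,d}/A(\sph{d-1})}\,\|V^k\|_{L^2} \le \const^k$ by Lemma \ref{l:sup-sp}) makes the double series $\sum_k \sum_{j=0}^{k\dg}$ dominated by a convergent series independent of $x$.
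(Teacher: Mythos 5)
There is a genuine gap at the heart of your argument: the term-by-term integrals do not exist. After the exponential expansion, the $k$th term is $\rint e^{-\iunit\ip x z}|z|^{k\alpha}[V(u_z)]^k\,\dd z$, and its radial part reduces to $\intzi s^{k\alpha+d/2}J_{j+b}(s)\,\dd s$ with $b=d/2-1$. The Mellin--Bessel formula you invoke, $\intzi t^{\mu-1}J_\nu(t)\,\dd t=2^{\mu-1}\Gamma((\nu+\mu)/2)/\Gamma((\nu-\mu)/2+1)$, is valid only for $\Re\mu<3/2$, whereas here $\mu=k\alpha+d/2+1\ge 2$; you state the validity range and then apply the formula outside it. Your claim that ``each radial integral converges'' using $|J_a(t)|\le\const\,t^{-1/2}$ is false: the integrand's amplitude grows like $t^{k\alpha+d/2-1/2}$, so the integral diverges even as an improper oscillatory integral, and a fortiori there is no absolute convergence that would let Tonelli or dominated convergence justify swapping $\sum_k$ with $\rint\dd z$. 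The condition $\alpha<1$ controls the sum over $k$ of the final coefficients, but it does nothing to make the individual $k$-terms well defined; that is the real obstruction, not the bookkeeping of $\Gamma$-ratios over $k!$.

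The paper's proof exists precisely to get around this. It first inserts an Abel regularizer $e^{-\rx|z|}$ (writing $g=\lim_{\rx\dto0}g_\rx$ by dominated convergence, using integrability of $\FT\mu$); with $\rx>0$ fixed, $e^{-\rx|z|+|z|^\alpha|V(u_z)|}$ is integrable, so the expansion and term-by-term integration are legitimate, and the regularized radial integrals $\intzi e^{-\delta s}s^{k\alpha+d/2}J_{j+b}(s)\,\dd s$ have closed forms as ${_2}F_1$ expressions $L_{jk}(1/(1+\delta^2))$ (Andrews--Askey--Roy, Theorem 4.11.3 plus Pfaff). The bulk of the work is then the uniform bound $|L_{jk}(x)|\le\const^k\Gamma(k\alpha)$ for $|x|\le1$, proved by splitting the cases $j\ge k\alpha+1$ and $j<k\alpha+1$ and using Gauss's formula, log-convexity of $\Gamma$, and Stirling; only this bound, together with $\alpha<1$, permits interchanging $\lim_{\rx\dto0}$ with the sum over $k$, and the quotient $\Gamma((j+k\alpha+d)/2)/\Gamma((j-k\alpha)/2)$ emerges as the $\delta\dto0$ limit of the regularized integral (Eq.~(9.10.5) of that reference), i.e.\ as an Abel limit rather than as a convergent Hankel integral. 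Your final paragraph's bound $\sup_{\sph{d-1}}|S_{j,k}|\le\const^k(j+1)^{d/2-1}$ and the resulting uniform absolute convergence on $\{|x|\ge r\}$ are fine and match the paper's closing step, but the derivation of the formula itself needs the regularization-and-limit argument, which is missing from your proposal.
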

\begin{proof}[Remark]{\ }
  \begin{remarklist}
  \item By \eqref{e:V-sphere}, if $P$ is a polynomial of degree
    $\dg$, i.e., $P\in \Cal P_{\dg, d}$, then for each $k\in\Ints_+$,
    $V^k\in\Cal P_{k\dg, d}$, so from Section \ref{s:prelim},
    $S_{j,k}=0$ for $j>k\dg$, and each $S_{j,k}$ with $j\le k\dg$ has
    a closed form.  As a result, all the terms in the series
    \eqref{e:mvs-pdf<1} have closed form expressions.

  \item If $V$ is a constant $c>0$, then $S_{j,k}=c^k$ for $j=0$ and
    $0$ for $j>0$.  Consequently, \eqref{e:mvs-pdf<1} yields the well
    known result for the spherically symmetric case
    (\cite{uchaikin:99:vsp}, Eq.~(7.5.6)).
    
  \item It would be more satisfactory if an \ac series similar to
    \eqref {e:mvs-pdf<1} could be obtained for $P\in L^2(\sph{d-1})$.
    Since given $k$, $\Gamma((j+k\alpha+d)/2) / \Gamma((j-k\alpha)/2)$
    grows in the same order as $j^{k\alpha}$, the desired absolute
    convergence would require that $S_{j,k}(u) =(\proj_j V^k)(u)$
    vanish rapidly as $j\toi$.  However, it is unclear whether this is
    actually the case.  \qedhere
  \end{remarklist}
\end{proof}

In general, the tail asymptotic behavior of a multivariate stable
distribution is quite complicated, depending on the fractal dimensions
on the spectral spherical measure \cite{watanabe:07:tams}.  However,
\cite {rvaceva:62:stmsp}, Theorem 4.2 suggests that under the
condition of Theorem \ref {t:pdf-lt1}, as $|x|\toi$, $g(x)$ should
behave as $C P(u_x)/|x|^{\alpha+d}$ for some constant $C>0$ .  Indeed,
from \eqref {e:mvs-pdf<1}, 
\begin{align*}
  g(x) = -\frac{2^\alpha \pi^{-d/2}}{|x|^{\alpha+d}}
  \sum^\dg_{j=0} \frac{\Gamma((j+\alpha+d)/2)}{\Gamma((j-\alpha)/2)}
  S_{j,1}(u_x) + O(|x|^{-2\alpha-d}).
\end{align*}
On the other hand, from \eqref{e:che-ne1} and \eqref{e:V-sphere},
\begin{align*}
  S_{j,1} = (-\iunit)^j a_j P_j = (-1)^{(j+\rx_j)/2}
  [\tan(\pi\alpha/2)]^{\rx_j} w_j(\alpha) P_j,
\end{align*}
where $\rx_j = \cf{j\text{~is odd}}$ and $P_j = \proj_j P$.  Then by
the second expression in \eqref {e:Funk-Hecke2},
\begin{align*}
  \frac{\Gamma((j+\alpha+d)/2)}{\Gamma((j-\alpha)/2)}  S_{j,1}
  &=2^{1-\alpha} \pi^{d/2-1}\Gamma(\alpha+1)
    (-1)^{(j+\rx_j)/2} [\tan(\pi\alpha/2)]^{\rx_j}
    \sin((j-\alpha)\pi/2) P_j
  \\
  &=-2^{1-\alpha} \pi^{d/2-1} \Gamma(\alpha+1) \sin(\pi\alpha/2) P_j.
\end{align*}
It follows that
\begin{align} \label{e:pdf-asym-lt1}
  g(x) = \frac{2\alpha\Gamma(\alpha) \sin(\pi\alpha/2)}\pi
  \cdot \frac{P(u_x)}{|x|^{\alpha+d}} + O(|x|^{-2\alpha-d}).
\end{align}

For the general case, where $\mu$ may not have a spectral spherical
density, by using some of the arguments in the proof for Theorem
\ref{t:pdf-lt1}, the following result can be proved.
\begin{prop} \label{p:pdf-lt1}
  Let $\mu$ be a nondegenerate $\alpha$-stable distribution on
  $\Reals^d$, $d\ge2$, with $\alpha\in (0,1)$ and characteristic
  exponent \eqref{e:che-ss3}.  Let the \pdf of $\mu$ be $g$.  For
  $x\ne0$, put
  \begin{align} \label{e:proj-g}
    \phi_j(x) = 
    \sumoi k \frac{(-2^\alpha)^k \pi^{-d/2}}{k! |x|^{k\alpha+d}}
    \frac{\Gamma((j+k\alpha+d)/2)}{\Gamma((j-k\alpha)/2)}
    S_{j,k}(u_x), \quad j\in\Ints_+.
  \end{align}
  \begin{remarklist}
  \item \label{p:pdf-lt1a}
    The series in \eqref{e:proj-g} is uniformly \ac in $\{x{:}~|x|\ge
    c\}$ for any $c>0$.
  \item \label{p:pdf-lt1b}
    Given $r>0$, $\phi_j(r\,\cdot)\in \Cal H_{j,d}$ and
    $g(r\,\cdot) = \sumzi j \phi_j(r\,\cdot)$ in $L^2(\sph{d-1})$,
    that is, $\proj_j g(r\,\cdot)=  \phi_j(r\,\cdot)$. 
  \item \label{p:pdf-lt1c}
    The \pdf of $|X|$ with $X\sim\mu$ is equal to
    \begin{align} \label{e:pdf-abs-lt1}
      g_{|X|}(r)
      =
      \sumoi k \frac{(-2^\alpha)^k \pi^{-d/2}}{k! r^{k\alpha+1}}
      \frac{\Gamma((k\alpha+d)/2)}{\Gamma(-k\alpha/2)}
      \sint V^k\,\dd\omega.
    \end{align}
  \end{remarklist}
\end{prop}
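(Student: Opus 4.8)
The plan is to mimic the proof of Theorem~\ref{t:pdf-lt1} but without assuming $\mu$ has a spectral spherical density, keeping only the fact (guaranteed by Lemma~\ref{l:ReV}) that $\inf_{\sph{d-1}}\Re V>0$ and that $V$ is bounded, so every power $V^k$ lies in $L^2(\sph{d-1})$. The starting point is the inverse Fourier representation \eqref{e:ift-ss}, $g(x)=(2\pi)^{-d}\rint e^{-\iunit\ip xz}e^{-\Phi_\mu(z)}\,\dd z$. Writing $z=su_z$ with $s=|z|>0$ and using $\Phi_\mu(z)=s^\alpha V(u_z)$ from \eqref{e:che-ss3}, I would pass to polar coordinates via the measure $\omega$, expand $e^{-s^\alpha V(u_z)}=\sumzi k (-1)^k s^{k\alpha}V(u_z)^k/k!$, and integrate term by term in $s$ against $e^{-\iunit s\ip{u_x}{u_z}}s^{d-1}$. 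The radial integrals $\intzi s^{k\alpha+d-1}e^{-\iunit s\ip{u_x}{u_z}}\,\dd s$ are standard (a Gamma-function times a fractional power of $\ip{u_x}{u_z}$ with an appropriate complex phase), and collecting them produces exactly the kernel that, after applying the angular-integration identity \eqref{e:FJ-proj} or equivalently Funk--Hecke, yields the coefficients $\Gamma((j+k\alpha+d)/2)/\Gamma((j-k\alpha)/2)$ appearing in \eqref{e:proj-g} and \eqref{e:mvs-pdf<1}. Since $\alpha<1$, the relevant series of Bessel-type terms converges, which is what makes term-by-term integration legitimate here but not for $\alpha\in(1,2)$.

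For part~\ref{p:pdf-lt1a}, absolute and uniform convergence of \eqref{e:proj-g} on $\{|x|\ge c\}$: bound $|S_{j,k}(u_x)|\le\sup|S_{j,k}|\le (c_{j,d}/A(\sph{d-1}))^{1/2}\Norm{\proj_j V^k}$ by Lemma~\ref{l:sup-sp}, note $\sumzi j\Norm{\proj_j V^k}^2=\Norm{V^k}^2\le (\sup|V|)^{2k}A(\sph{d-1})$, and use \eqref{e:c-asym} together with $\Gamma((j+k\alpha+d)/2)/\Gamma((j-k\alpha)/2)=O(j^{k\alpha})$; the Gamma ratio $\Gamma(\alpha+1)^k$-type growth in $k$ is killed by the $1/k!$ and the $(\sup|V|)^k$ factor. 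Summing first over $j$ for fixed $k$ via Cauchy--Schwarz (exactly as in the proof of \eqref{e:wp}) and then over $k$ gives a geometric-times-$1/k!$ bound, uniform for $|x|\ge c$. For part~\ref{p:pdf-lt1b}, once term-by-term integration in the Fourier inversion is justified on each sphere $\{|x|=r\}$ in the $L^2(\sph{d-1})$ sense, the identity $g(r\,\cdot)=\sumzi j\phi_j(r\,\cdot)$ follows; that $\phi_j(r\,\cdot)\in\Cal H_{j,d}$ is immediate because each $S_{j,k}\in\Cal H_{j,d}$ and the series converges uniformly, hence in $L^2$, inside the finite-dimensional space $\Cal H_{j,d}$ (which is closed), so $\proj_j g(r\,\cdot)=\phi_j(r\,\cdot)$. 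For part~\ref{p:pdf-lt1c}, the \pdf of $|X|$ is $g_{|X|}(r)=r^{d-1}\sint g(ru)\,\omega(\dd u)$; integrating \eqref{e:proj-g}--type series over $u\in\sph{d-1}$ kills all $j\ge1$ terms (since $\sint\proj_j f\,\dd\omega=0$ for $j\ge1$) and leaves only $j=0$, where $S_{0,k}=\proj_0 V^k=A(\sph{d-1})^{-1}\sint V^k\,\dd\omega$; plugging in and simplifying the constants $A(\sph{d-1})=2\pi^{d/2}/\Gamma(d/2)$ against $\Gamma((k\alpha+d)/2)/\Gamma(-k\alpha/2)$ and the $r^{d-1}$ factor produces \eqref{e:pdf-abs-lt1}.

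The main obstacle is rigorously justifying the interchange of summation ($\sumzi k$ in $e^{-\Phi_\mu}$) with the (conditionally convergent, oscillatory) radial integral $\intzi \cdots\,\dd s$ without the polynomial hypothesis on $P$: for a polynomial spectral density one has only finitely many $j$'s per $k$, but in general one must control the double series in $(j,k)$. The cleanest route is to regularize — e.g. insert $e^{-\rx s}$, $\rx\dto0$, making the radial integral absolutely convergent so Fubini applies, evaluate explicitly, and then pass to the limit using the uniform bound from part~\ref{p:pdf-lt1a} together with the observation that the resulting series is dominated uniformly in $\rx$ on $\{|x|\ge c\}$; alternatively, one works directly in $L^2(\sph{d-1})$ using Parseval, writing $g(r\,\cdot)=\sumzi j \proj_j g(r\,\cdot)$ and computing each $\proj_j g(r\,\cdot)$ from \eqref{e:ft-j}/\eqref{e:FJ-proj} applied to the (bounded, hence $L^2$) function $e^{-\Phi_\mu(su_z)}$ fiber-by-fiber in $s$ before expanding in $k$, which is where the $\alpha<1$ restriction is genuinely used to guarantee the radial integrals defining $\phi_j$ converge. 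I would also need to confirm $\FT\mu\in L^1(\Reals^d)$ (so that \eqref{e:ift-ss} holds pointwise and $g$ is continuous), which for nondegenerate $\alpha$-stable with $\alpha<1$ follows from $\Re\Phi_\mu(z)\ge|z|^\alpha\inf_{\sph{d-1}}\Re V$ and the volume estimate $\rint e^{-\delta|z|^\alpha}\,\dd z<\infty$.
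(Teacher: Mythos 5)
Your overall route is the paper's: Fourier inversion, $e^{-\rx s}$ regularization, the Bessel/Funk--Hecke identities \eqref{e:ft-j} and \eqref{e:FJ-proj}, and dominated convergence as $\rx\dto0$. But two of your steps, as written, have genuine problems. First, for part \ref{p:pdf-lt1a} you propose to sum over $j$ for fixed $k$ by Cauchy--Schwarz ``exactly as in the proof of \eqref{e:wp}''. That step fails: the analogue of $\sum_j c_{j,d}\,w_j(\alpha)^2$ here is $\sum_j c_{j,d}\bigl[\Gamma((j+k\alpha+d)/2)/\Gamma((j-k\alpha)/2)\bigr]^2$, whose terms grow like $j^{2k\alpha+d-2}$ by \eqref{e:c-asym} and Stirling, so this $j$-sum diverges for every $k$. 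Absolute convergence of the double series in $(j,k)$ is precisely the open issue flagged in the remark after Theorem \ref{t:pdf-lt1}; it cannot be obtained this way. Fortunately it is also not needed: \eqref{e:proj-g} is, for each fixed $j$, a series in $k$ alone, and the bound \eqref{e:Sjk-bound} together with Euler's reflection formula and log-convexity of $\Gamma$ gives, for $k\ge j/\alpha$, a term bound of order $\const^k\,\Gamma(k\alpha+d/2)(k\alpha+1)^{d-2}/(k!\,|x|^{k\alpha+d})$, which is summable since $\alpha<1$; that is the paper's short proof of \ref{p:pdf-lt1a}.

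Second, in part \ref{p:pdf-lt1b} the crux is the uniform-in-$\rx$ domination that you assert, and the bound you invoke (``the uniform bound from part \ref{p:pdf-lt1a}'') does not supply it: after regularization the $k$th term is not the limiting Gamma-ratio but the value $L_{jk}\bigl(1/(1+\delta^2)\bigr)$ of a ${}_2F_1$, $\delta=\rx/r$, coming from the Laplace--Bessel integral (see the computation leading to \eqref{e:I_r} and \eqref{e:proj-pdf}), and what is required is a bound on these values uniform in $\delta>0$. The paper obtains it from the hypergeometric estimate \eqref{e:Ljk-G}; for fixed $j$ and large $k$ one is in the branch $j<k\alpha+1$ of that analysis, which does not use the polynomial hypothesis and so can be imported verbatim --- but it must be imported, since without it the interchange of $\lim_{\rx\dto0}$ with $\sum_k$ is unjustified. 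Note also that pushing $\proj_j$ inside the inversion integral to reach \eqref{e:proj-pdf} requires Fubini, which is where $\inf_{\sph{d-1}}\Re(V)>0$ from Lemma \ref{l:ReV} enters. Your argument for \ref{p:pdf-lt1c} is correct and matches the paper, except that no cancellation of $A(\sph{d-1})$ against Gamma factors occurs: integrating the constant $S_{0,k}$ over the sphere simply yields $\sint V^k\,\dd\omega$.
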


Combining the argument that leads to  \eqref{e:pdf-asym-lt1} and 
Proposition \ref{p:pdf-lt1}, it can be seen that if $\mu$ has a
square-integrable spectral spherical density $P$, then for each
$j\in\Ints$,
\begin{align*}
  (\proj_j g)(x) =
  \frac{2\alpha\Gamma(\alpha) \sin(\pi\alpha/2)}\pi
  \cdot \frac{(\proj_j P)(u_x)}{|x|^{\alpha+d}} + O(|x|^{-2\alpha-d})
\end{align*}
as $|x|\toi$.  This suggests the tail asymptotic in
\eqref{e:pdf-asym-lt1} may hold, which would be a more satisfactory
result.  Finally, consider the asymptotic expansion of $g(x)$ as
$x\to0$.  The treatment is simpler than the previous results and
follows the one for the univariate case (cf.\ \cite {uchaikin:99:vsp},
Section~4.2).
\begin{prop} \label{p:asym-lt1}
  Let $\mu$ be a nondegenerate $\alpha$-stable distribution on
  $\Reals^d$, $d\ge2$, with $\alpha\in (0,1)$ and characteristic
  exponent \eqref{e:che-ss3}.  Then given $n\in\Ints_+$, as $x\to0$,
  \begin{align} \label{e:asym-lt1}
    g(x) =
    \frac{2^{1-d}}{\pi^{d/2} \alpha}
    \sum^n_{k=0} \Gamma((k+d)/\alpha) \Grp{\frac{|x|}{2}}^k
    \sum^{\Flr{k/2}}_{m=0}
    \frac{(-1)^m S_{k-2m,-(k+d)/\alpha}(u_x)}{m!\Gamma(k-m+d/2)}
    + O(|x|^{n+1}).
  \end{align}
  Furthermore,
  \begin{align*}
    g(0) =
    \frac{\Gamma(d/\alpha)}{\alpha (2\pi)^d} 
    \int V^{-d/\alpha}\,\dd\omega.
  \end{align*}
\end{prop}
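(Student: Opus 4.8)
\emph{Proof proposal.} The plan is to read $g$ off the inversion formula \eqref{e:ift-ss} and extract the expansion by Taylor-expanding $e^{-\iunit\langle x,z\rangle}$ in powers of $|x|$, as in the univariate treatment of \cite{uchaikin:99:vsp}, Section~4.2. Since $\mu$ is nondegenerate, Lemma~\ref{l:ReV} gives $c:=\inf_{\sph{d-1}}\Re(V)>0$, hence $|\FT\mu(z)|=e^{-|z|^\alpha\Re V(u_z)}\le e^{-c|z|^\alpha}$, so $\FT\mu\in L^1(\Reals^d)$ and $\int_{\Reals^d}|z|^m e^{-c|z|^\alpha}\,\dd z<\infty$ for every $m\ge0$. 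Writing $e^{-\iunit\langle x,z\rangle}=\sum_{k=0}^{n}\frac{(-\iunit\langle x,z\rangle)^k}{k!}+R_{n+1}(x,z)$ with $|R_{n+1}(x,z)|\le\frac{|x|^{n+1}|z|^{n+1}}{(n+1)!}$, plugging into \eqref{e:ift-ss}, and using $\langle x,z\rangle=|x|\langle u_x,z\rangle$, one gets
\[
  g(x)=\sum_{k=0}^{n}\frac{(-\iunit)^k|x|^k}{(2\pi)^d\,k!}\,I_k(u_x)+O(|x|^{n+1}),\qquad I_k(u)=\int_{\Reals^d}\langle u,z\rangle^k\,\FT\mu(z)\,\dd z,
\]
where the $O$-term is bounded, uniformly in direction, by $(2\pi)^{-d}\frac{|x|^{n+1}}{(n+1)!}\int_{\Reals^d}|z|^{n+1}e^{-c|z|^\alpha}\,\dd z$.

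The next step evaluates $I_k(u)$. In polar coordinates $z=sv$, $I_k(u)=\int_{\sph{d-1}}\langle u,v\rangle^k\big[\int_0^\infty s^{k+d-1}e^{-s^\alpha V(v)}\,\dd s\big]\omega(\dd v)$, and since $\Re(V(v))>0$ the inner integral equals $\alpha^{-1}\Gamma((k+d)/\alpha)\,V(v)^{-(k+d)/\alpha}$; here the elementary identity $\int_0^\infty s^{\beta-1}e^{-\lambda s^\alpha}\,\dd s=\Gamma(\beta/\alpha)/(\alpha\lambda^{\beta/\alpha})$ is extended from $\lambda>0$ to $\Re(\lambda)>0$ by analytic continuation (principal branch). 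Put $p=-(k+d)/\alpha$; by Lemma~\ref{l:ReV}, $V^p$ is bounded, hence in $L^2(\sph{d-1})$, and by \eqref{e:V-SH}, $V^p=\sum_j\iunit^jS_{j,p}$ with each $S_{j,p}$ real-valued. Since $\langle u,\cdot\rangle^k\in\Cal P_{k,d}$ is orthogonal to $\Cal H_{j,d}$ for $j>k$, only finitely many terms contribute, and the Funk-Hecke formula \eqref{e:Funk-Hecke0} gives $I_k(u)=\alpha^{-1}\Gamma((k+d)/\alpha)\sum_{j=0}^{k}\iunit^j\lambda_{j,d}(t^k)\,S_{j,p}(u)$, with $\lambda_{j,d}(t^k)=A(\sph{d-2})\int_{-1}^1 t^k\wt C^{(d-2)/2}_j(t)(1-t^2)^{(d-3)/2}\,\dd t$; by parity and by orthogonality of $\wt C^{(d-2)/2}_j$ to lower-degree polynomials, $\lambda_{j,d}(t^k)=0$ unless $j$ and $k$ have the same parity and $0\le j\le k$.

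The computational heart is the closed form
\begin{align*}
  \lambda_{j,d}(t^k)=\frac{2\pi^{d/2}\,k!}{2^k\,((k-j)/2)!\,\Gamma((k+j+d)/2)}\qquad\text{when }k-j\text{ is a nonnegative even integer,}
\end{align*}
which I would obtain by the same mechanism used for $w_j(\alpha)$ in Section~\ref{s:che}: use \cite{andrews:99:cup}, Exercise~6.28 to write $\wt C^{(d-2)/2}_j(t)=t^{\rx_j}\,{}_2F_1(-\Flr{j/2},\Flr{j/2}+\rx_j+(d-2)/2;(d-1)/2;1-t^2)$, substitute $s=1-t^2$ (legitimate since $k+\rx_j$ is even here) to reduce $\lambda_{j,d}(t^k)$ to a Beta-type integral of a terminating ${}_2F_1$, apply \cite{andrews:99:cup}, Theorem~2.2.4 followed by Gauss' formula \eqref{e:Gauss2F1}, and finish with Legendre's duplication \eqref{e:L-dup} (the cases $k$ even and $k$ odd collapse to the same answer). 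Substituting this into $I_k$ with $j=k-2m$, so that $(k+j+d)/2=k-m+d/2$ and $(-\iunit)^k\iunit^{k-2m}=(-1)^m$, and collecting the constant $2\pi^{d/2}(2\pi)^{-d}=2^{1-d}\pi^{-d/2}$, gives precisely \eqref{e:asym-lt1}. For the value at $0$ one may either specialize \eqref{e:asym-lt1} to $n=0$, $x=0$ and use $S_{0,-d/\alpha}=\proj_0(V^{-d/\alpha})=A(\sph{d-1})^{-1}\int V^{-d/\alpha}\,\dd\omega$, or simply evaluate $g(0)=(2\pi)^{-d}\int_{\Reals^d}e^{-\Phi_\mu(z)}\,\dd z=(2\pi)^{-d}\alpha^{-1}\Gamma(d/\alpha)\int_{\sph{d-1}}V^{-d/\alpha}\,\dd\omega$ directly in polar coordinates.

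I expect the main obstacle to be the closed-form evaluation of $\lambda_{j,d}(t^k)$: although it relies only on the hypergeometric toolkit already deployed for $w_j(\alpha)$, correctly threading the half-integer parameters and the two parity cases through the terminating ${}_2F_1$ summation and the duplication formula is where care is needed. The remaining ingredients — the Taylor-remainder estimate, the interchange of sum and integral, the Gamma integral with complex parameter, and the finite spherical-harmonic expansion of $\langle u,\cdot\rangle^k$ paired against $V^p$ — are routine given Lemma~\ref{l:ReV} and the facts collected in Section~\ref{s:prelim}.
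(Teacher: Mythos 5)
Your proposal is correct and follows essentially the same route as the paper: Taylor expansion of $e^{-\iunit\ip x z}$ with a remainder controlled by $\inf_{\sph{d-1}}\Re(V)>0$, evaluation of the radial integral as $\alpha^{-1}\Gamma((k+d)/\alpha)V^{-(k+d)/\alpha}$, expansion of $V^{-(k+d)/\alpha}$ via \eqref{e:V-SH}, and a Funk--Hecke evaluation of $\sint\ip u v^k S_{j,p}(v)\,\omega(\dd v)$. The ``computational heart'' you flag is in fact already available: since the derivation of \eqref{e:Funk-Hecke2} in Section \ref{s:che} is valid for all exponents in $(0,\infty)$, your $\lambda_{j,d}(t^k)$ is exactly $w_j(k)$ and needs no fresh hypergeometric computation.
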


\subsection{Proof of Theorem \ref{t:pdf-lt1}}
By Lemma \ref{l:ReV}, $\FT \mu(z) = e^{-|z|^\alpha V(u_z)}$ is
integrable.  Then by dominated convergence, $g(x) =  \lim_{\rx\dto0}
g_\rx(x)$, where
\begin{align*}
  g_\rx(x)
  = 
  (2\pi)^{-d} \rint
  e^{-\iunit\ip x z} e^{-\rx |z| - |z|^\alpha V(u_z)}\,\dd z.
\end{align*}
Since $\alpha<1$ and $V$ is bounded on $\sph{d-1}$, $e^{-\rx
  |z| + |z|^\alpha |V(u_z)|}$ is integrable.  Then
\begin{align*}
  g_\rx(x)
  =
  (2\pi)^{-d} \sumzi k \frac{(-1)^k}{k!}
  \rint e^{-\iunit\ip x z}
  e^{-\rx |z|}|z|^{k\alpha} [V(u_z)]^k \,\dd z.
\end{align*}

Put $b=(d-2)/2$.  Given $\rx>0$, for each $k\ge0$,  \label{e:pdf-lt1-L2}
\begin{align*}
  &
    \rint e^{-\iunit\ip x z}  e^{-\rx |z|}|z|^{k\alpha}
    [V(u_z)]^k \,\dd z
  \\
  &=
    \intzi e^{-\rx s} s^{k\alpha+d-1} \Cbr{
    \sint e^{-\iunit |x|s\ip {u_x} v} [V(v)]^k\,\omega(\dd v)}\,\dd s
  \\
  &=
    \intzi e^{-\rx s} s^{k\alpha+d-1} \Cbr{
    \frac{(2\pi)^{d/2}}{(|x|s)^b}
    \sumzi j (-\iunit)^j J_{j+b}(|x| s) (\proj_j V^k)(u_x)}\,\dd
    s
  \\
  &=
    \intzi e^{-\rx s} s^{k\alpha+d/2} \Cbr{
    \frac{(2\pi)^{d/2}}{|x|^b}
    \sum^{k\dg}_{j=0} J_{j+b}(|x| s) S_{j,k}(u_x)}\,\dd
    s,
\end{align*}
where the second equality follows from \eqref{e:ft-j} and the last one
from $(-\iunit)^j \proj_j V^k = S_{j,k}\equiv0$ for $j>k\dg$.  Then
\begin{align*}
  \rint e^{-\iunit\ip x z}  e^{-\rx |z|}|z|^{k\alpha}
  [V(u_z)]^k \,\dd z
  =
  \sum^{k\dg}_{j=0} F_{j,k,\rx}(|x|)S_{j,k}(u_x),
\end{align*}
where
\begin{align*}
  F_{j,k,\rx}(|x|)
  &=
    (2\pi)^{d/2} |x|^{-b}
    \intzi e^{-\rx s} s^{k\alpha+d/2} J_{j+b}(|x| s) \,\dd s
  \\
  &=
    \frac{(2\pi)^{d/2}}{|x|^{k\alpha+d}}
    \intzi e^{-\delta s} s^{k\alpha+d/2} J_{j+b}(s)
    \,\dd s 
\end{align*}
with $\delta = \delta(x) = \lfrac\rx{|x|}$.  As a result,
\begin{align}  \label{e:I_r}
  g_\rx(x)
  =
  (2\pi)^{-d} \sumzi k \frac{(-1)^k}{k!} \sum^{k\dg}_{j=0}
  F_{j,k,\rx}(|x|) S_{j,k}(u_x).
\end{align}

Given $x\ne0$, let $\rx\dto0$.  Then $\delta\dto0$ and by \cite  {andrews:99:cup}, Eq.~(9.10.5),
\begin{align*}
  F_{j,k,\rx}(|x|)\to
  \frac{(2\pi)^{d/2}}{|x|^{k\alpha+d}}
  \frac{2^{k\alpha+d/2} \Gamma((j+k\alpha+d)/2)}{
  \Gamma((j-k\alpha)/2)}.
\end{align*}
Note that when $j=k\alpha$, the \rhs is zero.  Thus, once it is shown
that on the \rhs of \eqref{e:I_r}, the order of
the infinite sum over $k$ and the limit can interchange, then \eqref
{e:mvs-pdf<1} follows.

The rest of the proof except for its very end is devoted to the
proof of the interchangeability.  For $a\ge0$ and $c>0$, by \cite{andrews:99:cup},
Theorem 4.11.3,
\begin{align*}
  \intzi e^{-\delta s} J_a(s) s^{c-1} \,\dd s
  =
  \frac{\Gamma(a+c)}{2^a \delta^{a+c}
  \Gamma(a+1)} \,{_2}F_1\Grp{
  \begin{matrix}
    (a+c)/2, (a+c+1)/2\\a+1
  \end{matrix}; -\nth{\delta^2}
  }.
\end{align*}
Then by Pfaff identity (\cite{andrews:99:cup}, Theorem 2.2.5), the
\rhs is equal to
\begin{align*}
  &
    \frac{\Gamma(a+c)}{2^a \delta^{a+c}
    \Gamma(a+1)} \Grp{1+\nth{\delta^2}}^{-(a+c)/2}
    {_2}F_1\Grp{ 
    \begin{matrix} (a+c)/2,(a-c+1)/2\\a+1
    \end{matrix}; \nth{1+\delta^2}}
  \\
  &=
    \frac{\Gamma(a+c)}{2^a
    \Gamma(a+1)} \Grp{1+\delta^2}^{-(a+c)/2}
    {_2}F_1\Grp{ 
    \begin{matrix} (a+c)/2,(a-c+1)/2\\a+1
    \end{matrix}; \nth{1+\delta^2}}.
\end{align*}
Then, with $a = j+b$ and $c = k\alpha+d/2+1$,
\begin{align*}
  F_{j,k,\rx}(|x|)
  &=
    \frac{(2\pi)^{d/2}}{|x|^{k\alpha+d}}
    \frac{\Grp{1+\delta^2}^{-(j+k\alpha+d)/2}}{2^{j-1+d/2}}
    L_{jk}\Grp{\nth{1+\delta^2}},
\end{align*}
where
\begin{align*}
  L_{jk}(x)
  &=
    \frac{\Gamma(j+k\alpha+d)}{\Gamma(j+d/2)}
    \,{_2}F_1\Grp{ 
    \begin{matrix} (j+k\alpha+d)/2,(j-k\alpha-1)/2\\j+d/2
    \end{matrix}; x}
  \\
  &=
    \frac{\Gamma(j+k\alpha+d)}{\Gamma(j+d/2)}
    \sumzi m \frac{((j+k\alpha+d)/2)_m ((j-k\alpha-1)/2)_m}{
    m! (j+d/2)_m} x^m.
\end{align*}
As a result, by \eqref{e:I_r}
\begin{align}\label{e:I-L}
  g_\rx(x)
  =
  \sumzi k \frac{(-1)^k (2\pi)^{-d/2}}{|x|^{k\alpha+d} k!}
  \sum^{k\dg}_{j=0}
  \frac{\Grp{1+\delta^2}^{-(j+k\alpha+d)/2}}{2^{j-1+d/2}}
  L_{jk}\Grp{\nth{1+\delta^2}} S_{j,k}(u_x).
\end{align}

To prove the interchangeability of the infinite sum over $k$ and the
limit as $\rx\to0$, it suffices to consider the sum over $k\gg 1$.
For $j=0,\ldots, k\dg$, if $j\ge k\alpha+1$, then every coefficient in
the series expression of $L_{jk}(x)$ is nonnegative, so for $|x|\le
1$, $|L_{jk}(x)|\le L_{jk}(1)$.  By Gauss' formula \eqref{e:Gauss2F1}
followed by Legendre's duplication formula
\eqref{e:L-dup},  
\begin{align}\nonumber
  L_{jk}(1)
  &=
    \frac{\Gamma(j+k\alpha+d)\Gamma(1/2)}{
    \Gamma((j-k\alpha)/2) \Gamma((j+k\alpha+1+d)/2)}
  \\\label{e:Ljk(1)}
  &=
    \frac{2^{j+k\alpha+d-1}\Gamma((j+k\alpha+d)/2)}{
    \Gamma((j-k\alpha)/2)}.
\end{align}
Since $\Gamma(x)$ is log-convex on $x>0$, and since $j\le k\dg$, from
the above display,
\begin{align*}
  L_{jk}(1) \le \frac{2^{k(\alpha+\dg)+d-1}
  \Gamma((k\dg + k\alpha+d)/2)} {\Gamma((k\dg - k\alpha)/2)}.
\end{align*}
Since $j\le k\dg$, by assuming $j\ge k\alpha+1$, $\dg>\alpha$.  Apply
Stirling's formula \eqref{e:Stirling} to get
\begin{align*}
  L_{jk}(1)
  &\le \const^k
    \Grp{\frac{k\dg + k\alpha+d}{2e}}^{(k\dg + k\alpha+d)/2}
    \!\!\!\bigg/\!
    \Grp{\frac{k\dg - k\alpha}{2e}}^{(k\dg - k\alpha)/2}
  \\
  &\le\const^k\times
    \frac{k^{(k\dg + k\alpha+d)/2}}
    {k^{(k\dg - k\alpha)/2}} = \const^k\times k^{k\alpha + d/2}
    \le \const^k\times \Gamma(k\alpha),
\end{align*}
where $\const$ denotes a constant independent of $(k,j)$.  It
follows that
\begin{align}
  \label{e:Ljk-G}
  |L_{jk}(x)|
  \le \const^k\times \Gamma(k\alpha).
\end{align}

On the other hand, if $j<k\alpha+1$, then for $|x|\le 1$,
\begin{align} \label{e:Ljk-a}
  |L_{jk}(x)|
  \le
  \frac{\Gamma(j+k\alpha+d)}{\Gamma(j+d/2)}
  \sumzi m b_m
\end{align}
with
\begin{align*}
  b_m = 
  \frac{((j+k\alpha+d)/2)_m |((j-k\alpha-1)/2)_m|}{
  m! (j+d/2)_m}.
\end{align*}
Let $m_{jk} = \Cil{(k\alpha+1-j)/2}$.   Then for $m\ge m_{jk}$,
$(j-k\alpha-1)/2 + m\ge0$, so $((j-k\alpha-1)/2)_m$ has the same sign
as $((j-k\alpha-1)/2)_{m_{jk}}$.  Then
\begin{align*}
  \sumzi m b_m
  &=
  \sum_{m<m_{jk}} b_m
  +
  \Abs{\sum_{m\ge m_{jk}} \frac{((j+k\alpha+d)/2)_m
    ((j-k\alpha-1)/2)_m}{m! (j+d/2)_m}}
  \\
  &\le
    2\sum_{m<m_{jk}} b_m + 
    \Abs{{_2}F_1\Grp{ 
    \begin{matrix} (j+k\alpha+d)/2,(j-k\alpha-1)/2\\j+d/2
    \end{matrix}; 1} 
  }.
\end{align*}
By Gauss' formula \eqref{e:Gauss2F1} and Euler's reflection
formula \eqref{e:Euler}, 
\begin{align*}
  {_2}F_1\Grp{ 
  \begin{matrix} \frac{j+k\alpha+d}2, \frac{j-k\alpha-1}2\\j+ \frac d2
  \end{matrix}; 1}
  &=
    \frac{\Gamma(j+d/2) \sqrt\pi}{\Gamma((j-k\alpha)/2)
    \Gamma((j+k\alpha+1+d)/2)}
    \\
  &=
  \frac{\Gamma(j+d/2) \Gamma(1+(k\alpha-j)/2)
  \sin((j-k\alpha)\pi/2)}{\sqrt\pi \Gamma((j+k\alpha+1+d)/2)}.
\end{align*}
In particular, when $j=k=0$, ${_2} F_1(d/2, -1/2; d/2; 1)=0$.  Then
\begin{align}\label{e:a-bound}
  \sumzi m b_m
  &\le
    2\sum_{m<m_{jk}} b_m
    +
    \frac{\Gamma(j+d/2) \Gamma(1+(k\alpha-j)/2)}{
    \Gamma((j+k\alpha+1+d)/2)}.
\end{align}
For $0\le m<m_{jk}$, $(j-k\alpha-1)/2\le (j-k\alpha-1)/2+m<0$, so
$|(j-k\alpha-1)/2 + m| < (k\alpha+1-j)/2$.  As a result,
$|((j-k\alpha-1)/2)_m| \le [(k\alpha+1-j)/2]^m$.  On the other hand,
\begin{align*}
  \frac{((j+k\alpha+d)/2)_m}{(j+d/2)_m}
  \le 
  \frac{((j+k\alpha+d+1)/2)_m}{(j+d/2)_m}
\end{align*}
and by $j<k\alpha+1$, the \rhs is increasing in $m$.  Therefore,
\begin{align*}
  \sum_{m<m_{jk}} b_m
  &\le
    \frac{((j+k\alpha+d+1)/2)_{m_{jk}}}{(j+d/2)_{m_{jk}}}
    \sum_{m<m_{jk}} \frac{[(k\alpha+1-j)/2]^m}{m!}
  \\
  &\le 
    \frac{((j+k\alpha+d+1)/2)_{m_{jk}}}{(j+d/2)_{m_{jk}}}
    e^{(k\alpha+1)/2}
    \le
    \const^k\times
    \frac{((j+k\alpha+d)/2)_{m_{jk}}}{(j+d/2)_{m_{jk}}}.
\end{align*}
Combine this with \eqref{e:Ljk-a} and \eqref{e:a-bound}, and then 
plug in $m_{jk} = \Cil{(k\alpha+1-j)/2}$ to get \label{e:Ljk(x)}
\begin{align*}
  |L_{jk}(x)|
  &\le
    \const^k\times
    \frac{\Gamma(j+k\alpha+d)}{\Gamma((j+k\alpha+d)/2)}
    \frac{\Gamma((j+k\alpha+d)/2 + m_{jk})}{\Gamma(j+d/2+m_{jk})}
  \\
  &\quad
    + \frac{\Gamma(j+k\alpha+d)\Gamma(1+ (k\alpha-j)/2)}{
    \Gamma((j+k\alpha+1+d)/2)}
  \\
  &\le
    \const^k\times
    \frac{\Gamma(j+k\alpha+d)}{\Gamma((j+k\alpha+d)/2)}
    \frac{\Gamma(k\alpha+ (d+3)/2)}{\Gamma((j+k\alpha+d+1)/2)}
  \\
  &\quad
    + \frac{\Gamma(j+k\alpha+d)\Gamma(1+(k\alpha-j)/2)}{
    \Gamma((j+k\alpha+1+d)/2)}
  \\
  &\le
    \const^k\times
    [\Gamma(k\alpha+(d+3)/2)+\Gamma((j+k\alpha+d)/2)
    \Gamma(1+(k\alpha - j)/2)],
\end{align*}
where Legendre's duplication formula \eqref{e:L-dup} is used in the
last line.  Since $k\gg1$, from \eqref{e:Stirling},
\begin{align*}
  |L_{jk}(x)|
  \le
  \const^k\times [\Gamma(k\alpha) +
  \Gamma(1+(j+k\alpha)/2)\Gamma(1+(k\alpha-j)/2)].
\end{align*}
Recall $0\le j<k\alpha+1$.  By the log-convexity of $\Gamma(x)$ on
$x>0$, the \rhs is no greater than the value when $j=k\alpha+1$.  Then
\eqref{e:Ljk-G} again holds.

As a result, for $k\gg1$, the $k\th$ summand in \eqref{e:I-L} has
absolute value no greater than
\begin{align} \label{e:I-term-bound0}
  \frac{\const^k \times \Gamma(k\alpha)}{|x|^{k\alpha+d} k!}
  \sum^{k\dg}_{j=0} |S_{j,k}(u_x)|.
\end{align}
Since $S_{j,k} = (-\iunit)^j \proj_j(V^k)\in\Cal H_{j,d}$, by Lemma
\ref {l:sup-sp},
\begin{align*} 
  \sup_{\sph{d-1}} |S_{j,k}|^2
  \le  \frac{c_{j,d}}{A(\sph{d-1})}\|S_{j,k}\|^2_{L^2(\sph{d-1})}
  \le \frac{c_{j,d}}{A(\sph{d-1})}\|V^k\|^2_{L^2(\sph{d-1})}
  \le c_{j,d}\sup_{\sph{d-1}} |V^{2k}|.
\end{align*}
Then by \eqref{e:c-asym} and Lemma \ref{l:ReV}
\begin{align} \label{e:Sjk-bound}
  \sup_{\sph{d-1}} |S_{j,k}| \le  \sqrt{c_{j,d}} \sup_{\sph{d-1}} |V|^k
  \le \const\times (j+1)^{d/2-1}
  \sup_{\sph{d-1}} |V|^k.
\end{align}
It follows that \eqref{e:I-term-bound0} is bounded by $D_k = \const^k
\times\Gamma(k\alpha)/(|x|^{k\alpha+d} k!)$.  Since $\alpha \in
(0,1)$, by Stirling's formula \eqref{e:Stirling}, $\sumzi k D_k
<\infty$.  Then by  dominated convergence, the desired
interchangeability follows.  Finally, by combining \eqref{e:Ljk-G} and
the bound on $\sup_{\sph{d-1}} |S_{j,k}|$, for any constant $M>0$,
\begin{align*}
  \sumoi k \frac{M^k}{k!} \sum^{k\dg}_{j=0}
  |L_{jk}(1)|\sup_{\sph{d-1}} |S_{j,k}|
  \le
  \sumoi k \frac{M^k}{k!} \sum^{k\dg}_{j=0} [\const^k\times
  \Gamma(k\alpha)(j+1)^{d/2-1}] < \infty.
\end{align*}
Then from the form of $L_{jk}(1)$ in \eqref{e:Ljk(1)}, it easily
follows that the series \eqref{e:mvs-pdf<1} is uniformly \ac in
$\{x{:}~|x|\ge r\}$ for any $r>0$.

\subsection{Proof of Proposition \ref{p:pdf-lt1}}
\ref{p:pdf-lt1a}
From \eqref{e:Sjk-bound}, Euler's reflection formula \eqref
{e:Euler}, and log-convexity of the Gamma function, each term in
\eqref{e:proj-g} with $k\ge j/\alpha$ has absolute value no greater
than
\begin{align*}
  &
    \frac{2^{\alpha k}}{k! |x|^{k\alpha+d}}
    \Gamma((j+k\alpha+d)/2) \Gamma(1+(k\alpha-j)/2)\times \const^k
    \times (j+1)^{d/2-1}
  \\
  &\le
    \frac{\const^k}{k! |x|^{k\alpha+d}}
    \Gamma(k\alpha+d/2) (k\alpha+1)^{d-2}.
\end{align*}
Since $\alpha\in (0,1)$, for any $c>0$, the sum of the
above terms over $k\ge j/\alpha$ is uniformly convergent on
$\{x{:}~|x|\ge c\}$, yielding the proof of \ref{p:pdf-lt1a}.

\ref{p:pdf-lt1b}
Fix $r>0$.  From \ref{p:pdf-lt1a}, $\phi_j(r\,\cdot)$ is a continuous
function on $\sph{d-1}$.  Then by $S_{j,k}\in \Cal H_{j,d}$,
$\phi_j(r\,\cdot)\in \Cal H_{j,d}$.  On the other hand, $g(r\,\cdot)$
is bounded so it is in $L^2(\sph{d-1})$.  Thus, to prove
\ref{p:pdf-lt1b}, it suffices to show $\proj_j g(r\,\cdot) =
\phi_j(r\,\cdot)$.

Write $h = \proj_j g(r\,\cdot)$ and $b=d/2-1$.  Since
$\inf_{\sph{d-1}} \Re(V)>0$ by Lemma \ref{l:sup-sp}, from the
integral representation of $\proj_j$ in \eqref{e:proj} and
Fubini's theorem, for each $u\in\sph{d-1}$,
\begin{align*}
  h(u)
  &=
    \proj_j
    \Sbr{(2\pi)^{-d} \rint e^{-\iunit r |z| \ip {u_z} \cdot}
    e^{-|z|^\alpha V(u_z)}\,\dd z}(u)
  \\
  &=
    (2\pi)^{-d} \rint (\proj_j e^{-\iunit r|z|\ip {u_z}\cdot})(u)
    e^{-|z|^\alpha V(u_z)}\,\dd z.
\end{align*}

Change the last integral into polar and coordinates and
plug in \eqref{e:FJ-proj}.  Then
\begin{align*}
  h(u)
  =
  \frac{(2\pi)^{-d/2}}{r^b}
  \intzi s^{d/2} (-\iunit)^j  J_{j+b}(r s)\Sbr{
  \frac{c_{j,d}}{A(\sph{d-1})}\sint
  \wt C^b_j(\ip u v) e^{-s^\alpha V(v)}\,\omega(\dd v)}\,\dd s.
\end{align*}
Then by \eqref{e:proj} again,
\begin{align} \label{e:proj-pdf}
  h(u)
  =
  \frac{(2\pi)^{-d/2} (-\iunit)^j}{r^b} \intzi s^{d/2} J_{j+b}(r
  s)\proj_j(e^{-s^\alpha V})(u)\,\dd s.
\end{align}
Note that by Fubini's theorem, each integral in the above three
displays is well-defined in the $L^1$ sense.  Then by dominated
convergence, $h(u) = \lim_{\rx\dto0} h_\rx(u)$, where
\begin{align*}
  h_\rx(u)
  &=
    \frac{(2\pi)^{-d/2} (-\iunit)^j}{r^b}
    \intzi s^{d/2} J_{j+b}(rs) \proj_j(e^{-s^\alpha
    V})(u) e^{-\rx s}\,\dd s
  \\
  &=
    \frac{(2\pi)^{-d/2} (-\iunit)^j}{r^b}
    \intzi s^{d/2} J_{j+b}(rs) \sumzi k
    \frac{(-s^\alpha)^k}{k!} (\proj_j V^k)(u) e^{-\rx s}\,\dd s
  \\
  &=
    \frac{(2\pi)^{-d/2}}{r^{k\alpha+d}}
    \sumzi k \frac{(-1)^k S_{j,k}(u)}{k!}
    \intzi s^{k\alpha+d/2} J_{j+b}(s) e^{-\delta s}\,\dd s
\end{align*}
with $\delta = \rx/r$.  From \eqref{e:I_r},
\begin{align*}
  h_\rx(u) =
  \sumzi k \frac{(-1)^k (2\pi)^{-d/2}}{r^{k\alpha+d} k!}
  \frac{\Grp{1+\delta^2}^{-(j+k\alpha+d)/2}}{2^{j-1+d/2}}
  L_{jk}\Grp{\nth{1+\delta^2}} S_{j,k}(u),
\end{align*}
and from the last paragraph in the proof of Theorem \ref{t:pdf-lt1},
each term on the \rhs has absolute value bounded by $C^k \sint
|V|^{2k}\, \dd\omega\cdot \Gamma(k\alpha)/(r^{k\alpha+ d} k!)$, where
$C$ is a constant.  Then by the same dominated convergence argument
following \eqref{e:I_r}, the proof of \ref{p:pdf-lt1b} follows.

\ref{p:pdf-lt1c}
The joint \pdf of $|X|$ and $u_X$ is $f(r,\theta) = r^{d-1}
g(r\theta)$, which is bounded on $\sph{d-1}$ for given $r>0$.
Integrate over $\theta\in\sph{d-1}$.  From \ref{p:pdf-lt1b} and $\sint
\phi\, \dd\omega=0$ for $\phi\in \Cal H_{j,d}$ if $j\ge 1$,
$g_{|X|}(r) = r^{d-1} \sint \phi_0(rv)\,\omega(\dd v)$.  By
\ref{p:pdf-lt1a}, the last integral can be  done term by term for the
series in \eqref{e:proj-g}.  By \eqref{e:proj}, $S_{0,k}$ is constant
$\nth{A(\sph{d-1})} \sint V^k\,\dd\omega$.  Then $\sint S_{0,k}\,
\dd\omega =  \sint V^k\,\dd\omega$ and \eqref{e:pdf-abs-lt1} follows.

\subsection{Proof of Proposition \ref{p:asym-lt1}}
Fix $n\in\Ints_+$.  By Taylor's theorem, for $t\in\Reals$,
\begin{align*}
  e^{-\iunit t}
  =
  \sum^n_{k=0} \frac{(-\iunit t)^k}{k!}
  + \nth{n!} \int^t_0 s^n (-\iunit)^{n+1} e^{-\iunit
  (t-s)}\,\dd s.
\end{align*}
Denote the integral on the \rhs by $R_n(t)$.  Then $|R_n(t)| =
O(|t|^{n+1})$.  As a result, for $x$, $z\in\Reals^d$, $|R_n(\ip x z)|
= |z|^{n+1} O(|x|^{n+1})$.  Since by Lemma  \ref{l:ReV},
$\inf_{\sph{d-1}} \Re(V) >0$, then $\rint |z|^{n+1} e^{-|z|^\alpha
  \Re(V(u_z))}\,\dd z<\infty$.  Then \label{e:g(x)}
\begin{align*}
  g(x)
  &=(2\pi)^{-d}     
    \rint
    \Sbr{\sum^n_{k=0} \frac{(-\iunit\ip x z)^k}{k!} + R_n(\ip x z)}
    e^{-|z|^\alpha V(u_z)}\,\dd z
  \\
  &=
    (2\pi)^{-d}\sum^n_{k=0} \frac{(-\iunit)^k}{k!}
    \rint \ip x z^k e^{-|z|^\alpha V(u_z)}\,\dd z
    + O(|x|^{n+1}).
\end{align*}
Given $k=0,\ldots,n$, in polar coordinates,
\begin{align*}
  \rint \ip x z^k e^{-|z|^\alpha V(u_z)}\,\dd z
  &=
    |x|^k\sint \ip {u_x} v^k \Grp{
    \intzi s^{k+d-1} e^{-V(v) s^\alpha}\,\dd s}\omega(\dd v)
  \\
  &=\frac{\Gamma(\frac{k+d}\alpha)}\alpha |x|^k
    \sint \ip {u_x} v^k [V(v)]^{-(k+d)/\alpha}\,\omega(\dd v).
\end{align*}
Since $\ip{u_x} \cdot$ and $V^{-(k+d)/\alpha}$ are both in
$L^2(\sph{d-1})$, by \eqref{e:V-SH}
\begin{align}\label{e:power-exp}
  \rint \ip x z^k e^{-|z|^\alpha V(u_z)}\,\dd z
  =\frac{\Gamma(\frac{k+d}\alpha)}\alpha |x|^k
  \sumzi j \iunit^j
  \sint \ip {u_x} v^k S_{j, -(k+d)/\alpha}(v)
  \,\omega(\dd v).   
\end{align}
Put $P_j = S_{j, -(k+d)/\alpha}$.  Because $P_j$ is a homogeneous
polynomial with degree $j$ and given $u\in\sph{d-1}$, $\ip u \cdot^k$
is a homogeneous polynomial with degree $k$, if $k-j$ is odd, then the
integral $\sint \ip u v^k P_j(v) \,\omega(\dd v)$ is equal to 0, 
while if $k-j$ is even, it is equal to
\begin{align*}
  \sint |\ip u v|^k (\sgn\ip u v)^{\rx_j}
  P_j(v)\,\omega(\dd v).
\end{align*}
From \eqref{e:Funk-Hecke} and the first expression in
\eqref{e:Funk-Hecke2}, the integral is equal to $w_j(k) P_j(u)$
with
\begin{align*}
  w_j(k)
  =
  \frac{\pi^{d/2}\Gamma(k+1)}
    {2^{k-1}\Gamma(\frac{k+j+d}2)\Gamma(\frac{k-j}2+1)}.
\end{align*}
Write $j=k-2m$, where $m$ is an integer.  Then $\iunit^j = \iunit^k
(-1)^m$ and $w_{k-2m}(k)=0$ if $m<0$.  Meanwhile, since $j\ge0$,
then $m\le k/2$.  Combine all the information with the \rhs of the
above display for $g(x)$.  Then \eqref{e:asym-lt1} follows.  Finally,
let $n=0$ in \eqref{e:asym-lt1}.  Notice that for $u\in\sph{d-1}$,
$S_{0, -d/\alpha}(u)$ is the constant $\nth{A(\sph{d-1})} \int
V^{-d/\alpha} \,\dd\omega$.  Then by the continuity of $g$ at $0$,
the expression of $g(0)$ is obtained.

\section{Density when $\alpha\in (1,2)$}  \label{s:pdf(1,2)}
In this section, let $\mu$ be a nondegenerate $\alpha$-stable
distribution with $\alpha\in (1,2)$.  As before, let $\mu$ have no
shift, so that $\Phi_\mu(z) = |z|^\alpha V(u_z)$ as in
\eqref{e:che-ss3}.

With $\alpha>1$, the first step to get the \pdf of $\mu$ follows the
one for the univariate case; cf.\ \cite {uchaikin:99:vsp}, Section~4.2
and also the proof of Proposition \ref{p:asym-lt1}.  That is, in
evaluating the inverse transform \eqref{e:ift-ss}, expand
$e^{-\iunit\ip x z}$ into a power series of $z$ and then integrate
term by term.  This leads to the result below.   Let the spherical
harmonics $S_{j,p}$ be as in  \eqref{e:Sjp}.

\begin{prop} \label{p:pdf-gt1}
  Let $\mu$ be a nondegenerate $\alpha$-stable distribution on
  $\Reals^d$, $d\ge2$, with $\alpha\in(1,2)$ and characteristic
  exponent \eqref{e:che-ss3}.  Then the \pdf of $\mu$ is 
  \begin{align}\label{e:pdf>1b}
    g(x)=
    \frac{2^{1-d}}{\pi^{d/2} \alpha}
    \sumzi n \Gamma((n+d)/\alpha) \Grp{\frac{|x|}{2}}^n
    \sum^{\Flr{n/2}}_{m=0}
    \frac{(-1)^m S_{n-2m,-(n+d)/\alpha}(u_x)}{m!\Gamma(n-m+d/2)},
    \quad x\in\Reals^d.
  \end{align}
  The series is uniformly \ac in $\{x{:}~|x|\le M\}$ for any $M>0$.
  If $X\sim \mu$, then $|X|$ has \pdf
  \begin{align} \label{e:pdf-abs-gt1}
    g_{|X|}(r) = \nth{\pi^{d/2}\alpha}
    \sumzi n \frac{(-1)^n \Gamma((2n+d)/\alpha)}{n! \Gamma(n+d/2)}
    \Grp{\frac{r}{2}}^{2n+d-1}
    \sint \frac{\dd\omega}{V^{(2n+d)/\alpha}}.
  \end{align}
\end{prop}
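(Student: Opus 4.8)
\emph{Plan.} The idea is to imitate, almost verbatim, the term‑by‑term computation already performed in the proof of Proposition~\ref{p:asym-lt1}; the one genuinely new point is that for $\alpha>1$ the \emph{full} Taylor series of $e^{-\iunit\ip xz}$ (not just a finite truncation with remainder) converges after integration. First, by Lemma~\ref{l:ReV} nondegeneracy gives $\inf_{\sph{d-1}}\Re(V)>0$, so $|\FT\mu(z)|=e^{-|z|^\alpha\Re V(u_z)}\le e^{-|z|^\alpha\inf\Re V}$ is integrable and $g$ is given by \eqref{e:ift-ss}. Writing $e^{-\iunit t}=\sum_{n=0}^{N}(-\iunit t)^n/n!+R_N(t)$ with $|R_N(t)|\le|t|^{N+1}/(N+1)!$ and using $|\ip xz|\le|x|\,|z|$, the inversion integral truncated at $N$ differs from $g(x)$ by at most $(2\pi)^{-d}\tfrac{|x|^{N+1}}{(N+1)!}\rint|z|^{N+1}e^{-|z|^\alpha\Re V(u_z)}\,\dd z$, and in polar coordinates this last integral equals $\tfrac1\alpha\Gamma\big(\tfrac{N+1+d}{\alpha}\big)\sint(\Re V)^{-(N+1+d)/\alpha}\,\dd\omega$, which is finite since $\inf\Re V>0$ and is $O(\const^{N})\,\Gamma\big(\tfrac{N+1+d}{\alpha}\big)$. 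Because $\alpha>1$, Stirling's formula \eqref{e:Stirling} forces $\Gamma((N+d)/\alpha)/(N+1)!\to0$ faster than any geometric rate, so the truncation error tends to $0$, uniformly for $|x|\le M$. Hence $g(x)=(2\pi)^{-d}\sumzi n\frac{(-\iunit)^n}{n!}\rint\ip xz^n e^{-|z|^\alpha V(u_z)}\,\dd z$, with this $n$‑series absolutely and uniformly convergent on $\{|x|\le M\}$.

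Next I would evaluate $\rint\ip xz^n e^{-|z|^\alpha V(u_z)}\,\dd z$ exactly as in the derivation of \eqref{e:power-exp} and the following display in the proof of Proposition~\ref{p:asym-lt1}: pass to polar coordinates, use the contour rotation $\intzi s^{n+d-1}e^{-V(v)s^\alpha}\,\dd s=\tfrac1\alpha\Gamma\big(\tfrac{n+d}{\alpha}\big)V(v)^{-(n+d)/\alpha}$ (legitimate since $\Re V(v)>0$, principal branch), expand $V^{-(n+d)/\alpha}=\sumzi j\iunit^jS_{j,-(n+d)/\alpha}$ by \eqref{e:V-SH}, integrate against $\ip{u_x}v^n$ term by term, discard the terms of the wrong parity, and apply the Funk--Hecke identity \eqref{e:Funk-Hecke} with exponent $n$. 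Putting $j=n-2m$ yields
\[
  \rint\ip xz^n e^{-|z|^\alpha V(u_z)}\,\dd z
  =\frac{\iunit^n}{\alpha}\Gamma\Big(\tfrac{n+d}{\alpha}\Big)|x|^n
  \sum_{m=0}^{\Flr{n/2}}(-1)^m\,\frac{\pi^{d/2}\,n!}{2^{n-1}\,m!\,\Gamma(n-m+d/2)}\,
  S_{n-2m,-(n+d)/\alpha}(u_x),
\]
using the first expression in \eqref{e:Funk-Hecke2}. Substituting into the $n$‑series, cancelling $(-\iunit)^n\iunit^n=1$ and the factorials, and collecting the constant $(2\pi)^{-d}\pi^{d/2}2^{1-n}=2^{1-d-n}\pi^{-d/2}$ (absorbing $2^{-n}$ into $(|x|/2)^n$) produces exactly \eqref{e:pdf>1b}.

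For the uniform absolute convergence claimed for \eqref{e:pdf>1b} on $\{|x|\le M\}$, I would estimate each block. By Lemma~\ref{l:sup-sp} and \eqref{e:c-asym}, $\sup_{\sph{d-1}}|S_{n-2m,-(n+d)/\alpha}|\le\sqrt{c_{n-2m,d}}\,\Norm{V^{-(n+d)/\alpha}}_{L^2(\sph{d-1})}\le\const\,n^{(d-2)/2}\,(\inf\Re V)^{-(n+d)/\alpha}=O(\const^{n}n^{(d-2)/2})$; and
\[
  \sum_{m=0}^{\Flr{n/2}}\frac{1}{m!\,\Gamma(n-m+d/2)}
  =\frac1{n!}\sum_{m=0}^{\Flr{n/2}}\binom nm\,\frac{\Gamma(n-m+1)}{\Gamma(n-m+d/2)}
  \le\frac{\const\,2^{n}}{n!},
\]
the last step because $\Gamma(k+1)/\Gamma(k+d/2)$ is bounded uniformly in $k\ge0$ for $d\ge2$. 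Hence the $n$th block of \eqref{e:pdf>1b} is $O\big(\Gamma((n+d)/\alpha)(M/2)^n n^{(d-2)/2}\const^{n}/n!\big)$, which is summable by \eqref{e:Stirling} precisely because $\alpha>1$. This is the only point where the hypothesis $\alpha\in(1,2)$ is essential and is where I expect the main care to be needed; for $\alpha<1$ the very same scheme collapses to merely the asymptotic expansion of Proposition~\ref{p:asym-lt1}.

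Finally, for the density of $|X|$: the joint density of $(|X|,u_X)$ at $(r,\theta)$ is $r^{d-1}g(r\theta)$, which is bounded in $\theta$, so $g_{|X|}(r)=r^{d-1}\sint g(rv)\,\omega(\dd v)$, and by the uniform absolute convergence just established I integrate \eqref{e:pdf>1b} term by term over $\sph{d-1}$. Since $\sint S\,\dd\omega=0$ for $S\in\Cal H_{j,d}$ with $j\ge1$, only the summands with $n-2m=0$ survive, i.e.\ $n=2\ell$ and $m=\ell$; by \eqref{e:proj} $S_{0,-(2\ell+d)/\alpha}$ is the constant $\tfrac1{A(\sph{d-1})}\sint V^{-(2\ell+d)/\alpha}\,\dd\omega$, so $\sint S_{0,-(2\ell+d)/\alpha}\,\dd\omega=\sint V^{-(2\ell+d)/\alpha}\,\dd\omega$. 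Using $\Gamma(n-m+d/2)=\Gamma(\ell+d/2)$ together with $r^{d-1}(r/2)^{2\ell}=2^{d-1}(r/2)^{2\ell+d-1}$, and then relabeling $\ell$ as $n$, yields \eqref{e:pdf-abs-gt1}, which completes the plan.
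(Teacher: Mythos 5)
Your proposal is correct and follows essentially the same route as the paper: expand $e^{-\iunit\ip x z}$ into its full power series, integrate term by term, evaluate each integral exactly as in the proof of Proposition~\ref{p:asym-lt1} (polar coordinates, $\intzi s^{n+d-1}e^{-Vs^\alpha}\,\dd s$, the expansion \eqref{e:V-SH} and Funk--Hecke), bound the $n$th block via Lemma~\ref{l:sup-sp} and Stirling to get uniform absolute convergence on $\{|x|\le M\}$ (using $\alpha>1$), and then integrate over $\sph{d-1}$ term by term so that only the $j=0$ harmonics survive, giving \eqref{e:pdf-abs-gt1}. The only cosmetic differences are that you justify the interchange with an explicit Taylor remainder bound where the paper invokes dominated convergence against $e^{|\ip x z|-|z|^\alpha\Re V(u_z)}$, and you bound $\sum_m 1/(m!\,\Gamma(n-m+d/2))$ by a binomial argument where the paper uses log-convexity plus Legendre duplication; both yield the same estimates.
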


For spherically symmetric $\alpha$-stable distributions, the following
construction based on subordination is well known
(cf.~\cite{sato:99:cup}, section 30).  Let $Z = (\eno Z d)$, where
$Z_i\sim N(0,1)$ are independent.  Let $\zeta$ be a positive
$(\alpha/2)$-stable random variable independent of $Z$ such that for
$t>0$, $\mean(e^{-t\zeta}) = e^{-t^{\alpha/2}}$.  Given constant
$V_0>0$, for $z\in \Reals^d$, $\mean(e^{\iunit \ip z {V^{1/\alpha}_0
    \sqrt{2\zeta} Z}})= \mean(e^{-V^{2/\alpha}_0 |z|^2\zeta}) =
e^{-|z|^\alpha V_0}$.  As a result, $V^{1/\alpha}_0 \sqrt{2\zeta} Z$
is spherically symmetric and $\alpha$-stable with characteristic
exponent $\Phi(z) = |z|^\alpha V_0$.  For more general $\alpha$-stable
distributions, Proposition \ref{p:pdf-gt1} has the useful consequence
below.
\begin{cor} \label{c:pdf-gt1}
  The \pdf in \eqref{e:pdf-abs-gt1} can be written as
  \begin{align} \label{e:pdf-abs-gt1b}
    g_{|X|}(r) = \frac{(r/2)^{d-1}}{2\pi^{d/2}}
    \sint \nth{V^{d/\alpha}}\mean\Sbr{\zeta^{-d/2}
    \exp\Cbr{-\frac{r^2}{4 V^{2/\alpha}\zeta}}}\,\dd\omega.
  \end{align}
\end{cor}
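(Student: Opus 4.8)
The plan is to recognise the coefficient $\Gamma((2n+d)/\alpha)/[\alpha\,\Gamma(n+d/2)]$ occurring in \eqref{e:pdf-abs-gt1} as (essentially) a negative moment of the subordinator $\zeta$, and then to resum the series over $n$ in closed form.

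First I would record the moment identity
\[
  \mean[\zeta^{-s}] = \frac{2\,\Gamma(2s/\alpha)}{\alpha\,\Gamma(s)},
  \qquad s>0 .
\]
This follows by writing $\zeta^{-s} = \Gamma(s)^{-1}\intzi t^{s-1}e^{-t\zeta}\,\dd t$ almost surely, taking expectations (Tonelli's theorem applies since the integrand is nonnegative), inserting $\mean[e^{-t\zeta}] = e^{-t^{\alpha/2}}$, and changing variables $u = t^{\alpha/2}$. Applying it with $s = n+d/2$, which is $\ge d/2\ge1$ because $d\ge2$, turns the coefficient of $(-1)^n(r/2)^{2n+d-1}\sint V^{-(2n+d)/\alpha}\,\dd\omega$ in \eqref{e:pdf-abs-gt1} into $\mean[\zeta^{-(n+d/2)}]/(2\pi^{d/2} n!)$.

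Next I would substitute this into \eqref{e:pdf-abs-gt1}, write $V^{-(2n+d)/\alpha} = V^{-d/\alpha}(V^{2/\alpha})^{-n}$ and $\mean[\zeta^{-(n+d/2)}] = \mean[\zeta^{-d/2}\zeta^{-n}]$, and interchange the summation over $n$ with $\sint\cdot\,\dd\omega$ and with $\mean$. After the interchange the sum over $n$ is just the exponential series
\[
  \sumzi n \frac{1}{n!}\Grp{-\frac{r^2}{4V^{2/\alpha}\zeta}}^n
  = \exp\Cbr{-\frac{r^2}{4V^{2/\alpha}\zeta}},
\]
and collecting the surviving factors $(r/2)^{d-1}/(2\pi^{d/2})$, $V^{-d/\alpha}$ and $\zeta^{-d/2}$ produces exactly \eqref{e:pdf-abs-gt1b}.

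The step that needs care, and which I expect to be the main obstacle, is justifying these interchanges, i.e.\ the absolute convergence of the resulting double series in $n$ and $v\in\sph{d-1}$. By Lemma \ref{l:ReV}, $\Re V$ is bounded below by a positive constant and $V$ is bounded, so $|V|^{-2/\alpha}$ is bounded on $\sph{d-1}$; hence it suffices to show $\mean[\zeta^{-d/2}\exp\{c/\zeta\}]<\infty$ for every $c>0$. By monotone convergence and the moment identity this expectation equals $\sumzi n (c^n/n!)\,\mean[\zeta^{-(n+d/2)}] = \sumzi n (c^n/n!)\cdot 2\Gamma((2n+d)/\alpha)/[\alpha\,\Gamma(n+d/2)]$, and Stirling's formula \eqref{e:Stirling} shows the $n$th term here is of order $\const^n\, n^{(2/\alpha-2)n}$, which is summable for every $c$ precisely because $\alpha>1$ makes the exponent $2/\alpha-2$ negative. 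This legitimises Fubini's theorem and, as a byproduct, shows that the right-hand side of \eqref{e:pdf-abs-gt1b} is absolutely convergent even though $\exp\{-r^2/(4V^{2/\alpha}\zeta)\}$ need not have modulus at most $1$. The hypothesis $\alpha\in(1,2)$ is thus used in an essential way, consistent with there being no analogous formula when $\alpha\le1$.
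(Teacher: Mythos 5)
Your argument is correct and follows the same basic route as the paper's proof: identify the coefficient $2\Gamma((2n+d)/\alpha)/[\alpha\Gamma(n+d/2)]$ in \eqref{e:pdf-abs-gt1} as $\mean(\zeta^{-n-d/2})$, push the expectation outside the sum over $n$, and resum the exponential series to get \eqref{e:pdf-abs-gt1b}. The one genuine difference is how the moment identity is obtained: the paper quotes the Mellin-transform formula $\mean(\zeta^s)=2\Gamma(-2s/\alpha)/[\alpha\Gamma(-s)]$, valid a priori only for $-1<\Re s<\alpha/2$, and must extend it to all $\Re s<\alpha/2$ by invoking the superexponential decay of the density of $\zeta$ near $0$ (Uchaikin--Zolotarev, Theorem 5.4.1); your derivation via $\zeta^{-s}=\Gamma(s)^{-1}\intzi t^{s-1}e^{-t\zeta}\,\dd t$, Tonelli, and the Laplace transform $\mean(e^{-t\zeta})=e^{-t^{\alpha/2}}$ produces all negative moments directly and is more self-contained. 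You also make explicit the Fubini justification — reducing it to $\mean\Sbr{\zeta^{-d/2}e^{c/\zeta}}<\infty$ and checking summability via Stirling, with the $n$th term of order $\const^n n^{(2/\alpha-2)n}$ — which the paper only asserts by saying the summation and integration in \eqref{e:pdf-abs-gt1} are interchangeable; your caution that $\exp\Cbr{-r^2/(4V^{2/\alpha}\zeta)}$ need not have modulus at most $1$ is warranted, since $\Re(V^{-2/\alpha})$ can be negative when $|\arg V|>\pi\alpha/4$. In short: same strategy, with a cleaner moment computation and a fuller convergence argument.
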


From \eqref{e:proj}, the series in \eqref{e:pdf>1b} consists of
integrals of the form $\int \ip u v^k [V(u)]^{-p} \,\omega(\dd u)$
with $k\in\Ints_+$ and $p>0$.  No closed form expressions of the
integrals are known except when $V$ is a constant $c$, in which case
$S_{n-2m, -(n+d)/\alpha} = c^{-(n+d)/\alpha}$ if $n=2m$ and 0
otherwise, yielding the \pdf of a spherically symmetric stable
distribution (\cite {uchaikin:99:vsp}, p.~212).  To get a more
explicit representation, one way is to expand $h(z) = z^{-p}$ into a
power series, plug in $z = V$, and integrate the resulting series term
by term.  Because $h(z)$ is singular at 0 and $V$ only takes values in
the right half of the complex plane, one may expand $h(z)$ at some
$R\in (0,\infty)$.  Since the resulting power series is convergent
only in the disc $\{z{:}~|z-R|<R\}$, it is necessary that $|V-R|<R$.
This is guaranteed by the result below.

\begin{lemma} \label{e:V-R}
  Let $\varrho(z) = \lfrac{|z|^2}{(2\Re z)}$.  Then
  \begin{align*}
    0<\varrho_0 = \sup_{\sph{d-1}} \varrho(V)
    <
    \frac{\lambda(\sph{d-1})}{1+\cos(\pi\alpha)} < \infty.
  \end{align*}
  Moreover, given any $R>\varrho_0$, letting $V_* = R - V$,
  $\sup_{\sph{d-1}} |V_*| < R$.
\end{lemma}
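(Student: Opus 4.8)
The plan is to estimate $\varrho(V)$ directly from the integral form of $V$ in \eqref{e:che-ss3}. Splitting into real and imaginary parts,
\[
  \Re V(u) = \sint |\ip u v|^\alpha\,\lambda(\dd v), \qquad
  \Im V(u) = -\tan\tfrac{\pi\alpha}2 \sint |\ip u v|^\alpha \sgn{\ip u v}\,\lambda(\dd v),
\]
so by the triangle inequality ($|\sgn t|\le1$) one gets $|\Im V(u)| \le |\tan(\pi\alpha/2)|\,\Re V(u)$, while $\Re V(u) \ge \inf_{\sph{d-1}}\Re V > 0$ by Lemma \ref{l:ReV} and the standing nondegeneracy assumption. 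Hence $|V(u)|^2 = (\Re V(u))^2 + (\Im V(u))^2 \le (1+\tan^2(\pi\alpha/2))(\Re V(u))^2 = (\Re V(u))^2/\cos^2(\pi\alpha/2)$, so
\[
  \varrho(V(u)) = \frac{|V(u)|^2}{2\Re V(u)} \le \frac{\Re V(u)}{2\cos^2(\pi\alpha/2)} = \frac{\Re V(u)}{1+\cos(\pi\alpha)}
\]
by the identity $2\cos^2(\pi\alpha/2) = 1+\cos(\pi\alpha)$. Since also $\varrho(V(u)) \ge \tfrac12\Re V(u) \ge \tfrac12\inf_{\sph{d-1}}\Re V > 0$ and $\varrho(V)$ is continuous on the compact sphere (its denominator being bounded away from $0$), the bounds $0<\varrho_0<\infty$ are immediate.

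Next I would estimate $\Re V$ uniformly. Since $|\ip u v|\le1$ with equality exactly when $v=\pm u$ and $\alpha>0$, we have $\Re V(u) = \sint |\ip u v|^\alpha\,\lambda(\dd v) \le \lambda(\sph{d-1})$, and equality for some $u$ would force $\lambda$ to be carried by $\{u,-u\}$, making $\mu$ supported on the line $\Reals u$ --- impossible because $\mu$ is nondegenerate and $d\ge2$. As $\Re V$ is continuous on $\sph{d-1}$ its supremum is attained, so in fact $\sup_{\sph{d-1}}\Re V < \lambda(\sph{d-1})$; combining with the previous display yields $\varrho_0 \le (\sup_{\sph{d-1}}\Re V)/(1+\cos(\pi\alpha)) < \lambda(\sph{d-1})/(1+\cos(\pi\alpha))$, the first assertion.

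For the second assertion, fix $R>\varrho_0$ and set $V_*=R-V$. The key point is the identity
\[
  |R-V(u)|^2 = R^2 - 2R\,\Re V(u) + |V(u)|^2 = R^2 - 2\Re V(u)\bigl(R - \varrho(V(u))\bigr),
\]
using $|V(u)|^2 = 2\Re V(u)\,\varrho(V(u))$. For every $u$, $\Re V(u) \ge \inf_{\sph{d-1}}\Re V>0$ and $R-\varrho(V(u)) \ge R-\varrho_0>0$, so the subtracted term is at least the positive constant $2(\inf_{\sph{d-1}}\Re V)(R-\varrho_0)$, uniformly in $u$. Hence $\sup_{\sph{d-1}}|V_*|^2 \le R^2 - 2(\inf_{\sph{d-1}}\Re V)(R-\varrho_0) < R^2$, i.e.\ $\sup_{\sph{d-1}}|V_*|<R$.

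The only step needing a real (if short) argument is the \emph{strict} inequality $\sup_{\sph{d-1}}\Re V < \lambda(\sph{d-1})$, and that is precisely where nondegeneracy is used: equality would confine $\mu$ to a line. All the other estimates are crude and uniform over $\sph{d-1}$, so nothing further is needed.
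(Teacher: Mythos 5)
Your proof is correct, but it reaches the bound by a different route than the paper. For the upper bound on $\varrho_0$ the paper estimates $|V|$ by Cauchy--Schwarz, getting $|V(u)|^2 < \frac{\lambda(\sph{d-1})}{\cos^2(\pi\alpha/2)}\sint \ip u v^2\,\lambda(\dd v)$, and pairs this with the lower bound $\Re V(u) > \sint \ip u v^2\,\lambda(\dd v)$ (valid since $\alpha<2$), so the factor $\sint\ip u v^2\,\lambda(\dd v)$ cancels in the ratio $\varrho(V)=|V|^2/(2\Re V)$; strictness plus continuity on the compact sphere then gives the strict bound. You instead use only $|\Im V|\le|\tan(\pi\alpha/2)|\,\Re V$ to get $\varrho(V)\le \Re V/(1+\cos(\pi\alpha))$, and obtain strictness from $\sup_{\sph{d-1}}\Re V<\lambda(\sph{d-1})$, which you justify via the equality case of $|\ip u v|\le1$ together with nondegeneracy (equality would force $\lambda$ to sit on $\{\pm u\}$ and hence $\mu$ on a translate of a line). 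Both arguments are sound; yours is more elementary and makes the role of nondegeneracy explicit, while the paper's Cauchy--Schwarz route avoids any support/equality-case discussion. For the final claim the paper argues geometrically --- $\varrho(z)$ is the radius of the circle through $0$ and $z$ centered on the real axis, so $R>\varrho(V(u))$ gives $|R-V(u)|<R$ pointwise --- whereas your identity $|R-V|^2=R^2-2\Re V\,(R-\varrho(V))$ yields the uniform bound $\sup_{\sph{d-1}}|V_*|^2\le R^2-2(\inf_{\sph{d-1}}\Re V)(R-\varrho_0)$ directly, without appealing to continuity or compactness; that quantitative margin is a small bonus of your approach.
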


Fixing $R$ as in Lemma \ref{e:V-R}, $V^{-p} = (R - V_*)^{-p}$ can be
expanded into a power series of $V_*$ that can be integrated term by
term.  Since $[V_*(-u)]^k = \conj{[V_*(u)]^k}$, similar to 
\eqref{e:V-SH},
\begin{align}\label{e:V-SH*}
  \begin{split}
    S^*_{j,k}
    &= (-\iunit)^j \proj_j (V^k_*)\in \Cal H_{j,d}\
    \text{is real-valued}, \quad j\ge0,\\
    V^k_*
    &= \sumzi j \iunit^j S^*_{j,k} \text{~in $L^2(\sph{d-1})$}.
  \end{split}
\end{align}

\begin{theorem} \label{t:pdf-gt1}
  Let $\mu$ be a nondegenerate $\alpha$-stable distribution on
  $\Reals^d$, $d\ge2$, with $\alpha\in(1,2)$ and characteristic
  exponent \eqref{e:che-ss3}.  Let $R$ and $S^*_{j,k}$ be defined as above.
  Then the \pdf of $\mu$ can be written as
  \begin{multline}\label{e:pdf>1}
    g(x)
    =
    \frac{2^{1-d}}{\pi^{d/2}\alpha}
    \sumzi n \Grp{\frac{|x|}{2}}^n
    \sum^{\Flr{n/2}}_{m=0} \frac{(-1)^m}{m!\Gamma(n-m+d/2)}
    \sumzi k \frac{\Gamma(k+(n+d)/\alpha)}{k! R^{k+(n+d)/\alpha}}
    S^*_{n-2m,k}(u_x),\\
    x\in\Reals^d.
  \end{multline}
  The series is uniformly \ac in $\{x{:}~|x|\le M\}$ for any $M>0$.
  If $X\sim\mu$, then $|X|$ has \pdf
  \begin{align}\label{e:pdf-abs>1}
    g_{|X|}(r)
    =
    \frac{(r/2)^{d-1}}{\pi^{d/2}\alpha}
    \sumzi n 
    \frac{(-r^2/4)^n}{n!\Gamma(n+d/2)}
    \sumzi k \frac{\Gamma(k+(2n+d)/\alpha)}{k! R^{k+(2n+d)/\alpha}}
    \sint V^k_*\,\dd\omega.
  \end{align}
\end{theorem}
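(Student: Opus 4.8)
The plan is to derive both \eqref{e:pdf>1} and \eqref{e:pdf-abs>1} from Proposition \ref{p:pdf-gt1} by replacing, in its formulas \eqref{e:pdf>1b} and \eqref{e:pdf-abs-gt1}, each negative power of $V$ by its Taylor expansion about $R$ in powers of $V_*=R-V$ and then rearranging. By Lemma \ref{e:V-R}, $\rho:=\Norm{V_*}_\infty/R<1$ (the supremum over the compact set $\sph{d-1}$ is attained since $V_*$ is continuous), and the open disc $\Cbr{z{:}~|z-R|<R}$ is contained in the half plane $\Cbr{z{:}~\Re z>0}$, on which $z\mapsto z^{-p}$ is holomorphic for every $p>0$. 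Since $\Abs{V(v)-R}=\Abs{V_*(v)}\le\Norm{V_*}_\infty<R$ for all $v\in\sph{d-1}$, one has, for $p>0$,
\[
  V^{-p}=(R-V_*)^{-p}
  =\sumzi k\frac{(p)_k}{k!\,R^{k+p}}\,V_*^k
  =\nth{\Gamma(p)}\sumzi k\frac{\Gamma(k+p)}{k!\,R^{k+p}}\,V_*^k ,
\]
the series converging on $\sph{d-1}$ absolutely and uniformly, because $\Abs{(p)_kV_*^k/(k!\,R^k)}\le(p)_k\rho^k/k!$ and $\sumzi k(p)_k\rho^k/k!=(1-\rho)^{-p}<\infty$. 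In particular it converges in $L^2(\sph{d-1})$, so applying the bounded operator $\proj_j$ term by term and recalling \eqref{e:Sjp} and \eqref{e:V-SH*},
\[
  S_{j,-p}=(-\iunit)^j\proj_j(V^{-p})
  =\nth{\Gamma(p)}\sumzi k\frac{\Gamma(k+p)}{k!\,R^{k+p}}\,S^*_{j,k},
  \qquad j\in\Ints_+ .
\]

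Next I would put $p=(n+d)/\alpha$, $j=n-2m$ and substitute the last identity into \eqref{e:pdf>1b}; the explicit factor $\Gamma((n+d)/\alpha)$ cancels the $1/\Gamma(p)$, leaving exactly the triple series in \eqref{e:pdf>1}. Likewise, since the $V_*$-expansion of $V^{-(2n+d)/\alpha}$ converges uniformly on $\sph{d-1}$, it may be integrated term by term,
\[
  \sint V^{-(2n+d)/\alpha}\,\dd\omega
  =\nth{\Gamma((2n+d)/\alpha)}\sumzi k
   \frac{\Gamma(k+(2n+d)/\alpha)}{k!\,R^{k+(2n+d)/\alpha}}
   \sint V^k_*\,\dd\omega ;
\]
substituting this into \eqref{e:pdf-abs-gt1}, the factor $\Gamma((2n+d)/\alpha)$ again cancels, and pulling $(r/2)^{d-1}$ out of $(r/2)^{2n+d-1}$ while combining $(-1)^n(r/2)^{2n}=(-r^2/4)^n$ yields \eqref{e:pdf-abs>1}. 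Both manipulations are purely formal at this stage and must be legitimized by absolute summability of the reindexed triple (resp.\ double) families.

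To that end, and to obtain the asserted uniform absolute convergence on $\Cbr{x{:}~|x|\le M\}$, I would bound $S^*_{n-2m,k}$ uniformly: by Lemma \ref{l:sup-sp}, \eqref{e:c-asym}, and $\Norm{V_*^k}_{L^2(\sph{d-1})}\le\sqrt{A(\sph{d-1})}\,\Norm{V_*}_\infty^k$,
\[
  \sup_{\sph{d-1}}\Abs{S^*_{n-2m,k}}
  \le\sqrt{c_{n-2m,d}}\,\Norm{V_*^k}_{L^2(\sph{d-1})}
  \le\const\,(n+1)^{d/2-1}(\rho R)^k ,
\]
so the inner $k$-sum in \eqref{e:pdf>1} is dominated by $\const\,(n+1)^{d/2-1}\Gamma(p)\,(R(1-\rho))^{-p}$ with $p=(n+d)/\alpha$, while $\Abs{\sint V_*^k\,\dd\omega}\le A(\sph{d-1})(\rho R)^k$ bounds the $k$-sum in \eqref{e:pdf-abs>1} by $A(\sph{d-1})\Gamma(p)(R(1-\rho))^{-p}$ with $p=(2n+d)/\alpha$. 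Feeding these into the outer sums reproduces, term by term, exactly the estimates already carried out in the proof of Proposition \ref{p:pdf-gt1}, with the base $(\inf_{\sph{d-1}}\Re V)^{-1}$ there replaced by the larger but still finite constant $(R(1-\rho))^{-1}=(R-\Norm{V_*}_\infty)^{-1}$; since $\alpha>1$, the factorial growth of $\Gamma((n+d)/\alpha)$ (resp.\ $\Gamma((2n+d)/\alpha)$) is absorbed by the $\Gamma(n-m+d/2)$ (resp.\ $n!\,\Gamma(n+d/2)$) in the denominator, so the dominating series converge, uniformly in $x$ on bounded sets. Tonelli's theorem then legitimizes every rearrangement and delivers the two representations with their stated convergence. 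I expect the main obstacle to be precisely this last point: controlling the newly introduced inner sum over $k$ uniformly in the outer indices $n,m$ so that the whole family is absolutely summable; once that reduction to the estimates of Proposition \ref{p:pdf-gt1} is in place, the remainder is bookkeeping with the binomial series and Pochhammer--Gamma identities.
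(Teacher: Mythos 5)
Your proposal is correct and follows essentially the same route as the paper: starting from Proposition \ref{p:pdf-gt1}, expanding $V^{-p}$ in the binomial series in $V_*/R$ (justified by Lemma \ref{e:V-R}), applying $\proj_{n-2m}$ (resp.\ integrating) term by term, and then establishing uniform absolute convergence via Lemma \ref{l:sup-sp}, the bound $\sup_{\sph{d-1}}|S^*_{n-2m,k}|\le\sqrt{c_{n-2m,d}}\,\sup_{\sph{d-1}}|V_*|^k$, summation of the inner $k$-series to $\Gamma(p)(R-\sup|V_*|)^{-p}$, and the same \eqref{e:pdf-gt1b-finite}-type estimate used for Proposition \ref{p:pdf-gt1}. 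The only differences are cosmetic (you state the $S_{j,-p}$ expansion as a separate identity and are slightly more explicit about the principal branch and the disc lying in the right half plane), so no changes are needed.
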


\begin{proof}[Remark]
  If $\mu$ has a polynomial spectral spherical density $P$ of degree
  $\dg$, i.e., $P\in\Cal P_{\dg, d}$, then by \eqref{e:V-sphere},
  $V_* = R - V\in\Cal P_{\dg,d}$.  Consequently, the first remark that
  follows Theorem \ref{t:pdf-lt1} applies to $S^*_{j,k}$.  Similarly,
  $\sint V^k_*\,\dd\omega$ can be expressed in closed form.
\end{proof}

Finally, consider the asymptotic expansion of $g(x)$ as $|x|\toi$.  It
is well known that all the partial derivatives of $g(x)$ tend to 0 as 
$|x|\toi$ (\cite{sato:99:cup}, Proposition 28.1).  However, it is hard
to get a grasp on the asymptotic behavior of $g(x)$ from
Proposition \ref {p:pdf-gt1} or Theorem \ref{t:pdf-gt1}, as their
representations of $g(x)$ are in terms of positive integral powers of
$|x|$.  Since given $|x|$, $g$ is characterized by its behavior on the
sphere $|x|\cdot \sph{d-1}$, one essentially needs to consider the
asymptotic expansion of the function $g(r\,\cdot)$ as $r\toi$.  The
following partial result supplies an asymptotic
expansion of the spherical harmonic of $g(r\,\cdot)$ of a given
degree. 
\begin{prop} \label{p:asym-gt1}
  Let $\mu$ be a nondegenerate $\alpha$-stable distribution on
  $\Reals^d$, $d\ge2$, with $\alpha\in(1,2)$, characteristic exponent
  \eqref{e:che-ss3}, and \pdf $g$.  Fix $j\in\Ints_+$.  Then for each
  $n\ge0$, as $r\toi$,
  \begin{align} \label{e:asym-proj-gt1}
    \sup_{\sph{d-1}} \Abs{
    \proj_j g(r\,\cdot) -
    \sum^n_{k=0} \frac{(-2^\alpha)^k \pi^{-d/2}}{k! r^{k\alpha+d}}
    \frac{\Gamma((j+k\alpha+d)/2)}{\Gamma((j-k\alpha)/2)}
    S_{j,k}}= O(r^{-(n+1)\alpha-d}).
  \end{align}
  Furthermore, if $g_{|X|}(r)$ denotes the \pdf of $|X|$, then as
  $r\toi$,
  \begin{align} \label{e:asym-radius-gt1}
    g_{|X|}(r) = 
    \sum^n_{k=0} \frac{(-2^\alpha)^k \pi^{-d/2}}{k! r^{k\alpha+d}}
    \frac{\Gamma((1+k\alpha+d)/2)}{\Gamma((1-k\alpha)/2)}
    \int V^k\,\dd\omega + O(r^{-(n+1)\alpha-1}).
  \end{align}
\end{prop}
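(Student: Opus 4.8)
The starting point is the representation \eqref{e:proj-pdf} from the proof of Proposition~\ref{p:pdf-lt1}: with $b=d/2-1$,
\[
(\proj_j g(r\,\cdot))(u)=\frac{(2\pi)^{-d/2}(-\iunit)^j}{r^b}\intzi s^{d/2}J_{j+b}(rs)\,(\proj_j e^{-s^\alpha V})(u)\,\dd s,\qquad r>0,\ u\in\sph{d-1}.
\]
Its derivation uses only $\FT\mu=e^{-|z|^\alpha V(u_z)}\in L^1(\Reals^d)$ and $\inf_{\sph{d-1}}\Re V>0$, both furnished by Lemma~\ref{l:ReV}, so it is valid verbatim for $\alpha\in(1,2)$. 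The new feature is that $\alpha>1$ forces $(\proj_j e^{-s^\alpha V})(u)=O(e^{-c_0 s^\alpha})$ as $s\toi$, where $c_0=\inf_{\sph{d-1}}\Re V$, so the integral is now \emph{absolutely} convergent. Thus the task reduces to the large‑$r$ asymptotics of the Hankel‑type integral $I(r,u):=\intzi s^{d/2}J_{j+b}(rs)\Psi(s,u)\,\dd s$, where $\Psi(s,u):=(\proj_j e^{-s^\alpha V})(u)=\iunit^j\sumzi k\frac{(-1)^k}{k!}S_{j,k}(u)s^{k\alpha}$ is a convergent series; throughout, uniformity in $u$ will be controlled by $\sup_{\sph{d-1}}|S_{j,k}|\le\sqrt{c_{j,d}}\,(\sup_{\sph{d-1}}|V|)^k$, from Lemma~\ref{l:sup-sp} and Lemma~\ref{l:ReV}.

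\textbf{Extracting the main terms.}
Fix $n$, pick $N\ge n+1$ large (constrained below), and a $C^\infty$ cutoff $\chi$ with $\chi\equiv1$ on $[0,1]$ and $\chi\equiv0$ on $[2,\infty)$. Write $s^{d/2}\Psi(s,u)=\chi(s)\sum_{k=0}^{N}\iunit^j\frac{(-1)^k}{k!}S_{j,k}(u)s^{k\alpha+d/2}+\rho_N(s,u)$, where $\rho_N(\cdot,u)=O(s^{(N+1)\alpha+d/2})$ as $s\to0$ uniformly in $u$, agrees with $s^{d/2}\Psi(s,u)$ (super‑exponentially small) for $s\ge2$, and is $C^{\Flr{(N+1)\alpha+d/2}}$. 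For each $k\le N$, the $\chi$‑term equals the Abel‑regularized integral $\intzi s^{k\alpha+d/2}J_{j+b}(rs)\,\dd s$ minus $\intzi(1-\chi(s))s^{k\alpha+d/2}J_{j+b}(rs)\,\dd s$; the first is the value already obtained in the proof of Theorem~\ref{t:pdf-lt1} via \cite{andrews:99:cup}, Theorem~4.11.3, the Pfaff transformation, Gauss' formula \eqref{e:Gauss2F1}, and Legendre's duplication \eqref{e:L-dup} (equivalently \cite{andrews:99:cup}, Eq.~(9.10.5)), namely $2^{k\alpha+d/2}\Gamma((j+k\alpha+d)/2)\big/\big(r^{k\alpha+d/2+1}\Gamma((j-k\alpha)/2)\big)$, while the second is $O(r^{-M})$ for every $M$ because $(1-\chi)s^{k\alpha+d/2}$ vanishes to infinite order at $s=1$, permitting unlimited integration by parts against the oscillating kernel. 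Multiplying by $\frac{(2\pi)^{-d/2}(-\iunit)^j}{r^b}$ and using $(-\iunit)^j\iunit^j=1$, $b+d/2+1=d$, and $(-1)^k2^{k\alpha}=(-2^\alpha)^k$, the $\chi$‑part contributes exactly $\sum_{k=0}^{N}\frac{(-2^\alpha)^k\pi^{-d/2}}{k!\,r^{k\alpha+d}}\frac{\Gamma((j+k\alpha+d)/2)}{\Gamma((j-k\alpha)/2)}S_{j,k}(u)+O(r^{-M})$; the terms with $n<k\le N$ are each $O(r^{-(n+1)\alpha-d})$ and fold into the error of \eqref{e:asym-proj-gt1}, leaving the asserted partial sum up to $k=n$.

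\textbf{The remainder --- the main obstacle.}
What remains is the $\rho_N$‑contribution $\frac{(2\pi)^{-d/2}(-\iunit)^j}{r^b}\intzi J_{j+b}(rs)\rho_N(s,u)\,\dd s$, and one must show $\intzi J_{j+b}(rs)\rho_N(s,u)\,\dd s=O(r^{-(N+1)\alpha-d/2-1})$ uniformly in $u$; after dividing by $r^{b}=r^{d/2-1}$ this is $O(r^{-(N+1)\alpha-d})=o(r^{-(n+1)\alpha-d})$ since $N\ge n+1$, completing \eqref{e:asym-proj-gt1}. This is the delicate point. Because $\rho_N(s,u)$ is only $O(s^{(N+1)\alpha+d/2})$ near $s=0$, the naive bound $|J_{j+b}(rs)|\le\min\{C(rs)^{j+b},C(rs)^{-1/2}\}$ gives merely $O(r^{-1/2})$, so the oscillation of $J_{j+b}$ must be used. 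The plan is to establish, uniformly in the parameter, a Watson‑type lemma for Hankel transforms: $\intzi J_{j+b}(rs)\phi(s)\,\dd s=O(r^{-\gamma-1})$ whenever $\phi$ is $C^m$ with $m$ large, super‑exponentially decaying, and $\phi(s)=O(s^{\gamma})$ near $0$ (here $\gamma=(N+1)\alpha+d/2$); this is where the choice of $N$ large enters, supplying the smoothness the lemma needs. Concretely one may either substitute the asymptotic expansion of $J_{j+b}$ on $[\delta,\infty)$ while controlling $[0,\delta]$ by its small‑argument bound and invoking the $|\xi|^{-\gamma-1}$ decay of the Fourier transform of a branch‑type singularity $s^{\gamma}_{+}$ (plus the rapid decay of the Fourier transform of the smooth part), or instead write $(\proj_j e^{-s^\alpha V})(u)$ as a $\wt C$‑weighted integral of $e^{-s^\alpha V(v)}$, use the subordination $e^{-s^\alpha w}=\mean[e^{-s^2w^{2/\alpha}\zeta}]$ with $\zeta$ the $(\alpha/2)$‑stable subordinator to reduce $\intzi s^{d/2}J_{j+b}(rs)e^{-s^\alpha w}\,\dd s$ to a Gaussian Hankel transform --- a ${}_1F_1$ --- and invoke the large‑argument asymptotics of ${}_1F_1$ inside the expectation, with care where $\zeta$ is comparable to $r^2$ (the regime in which moments of $\zeta$ diverge). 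Either route requires genuine work; this is precisely the sense in which the $\alpha\in(1,2)$ analysis is "much more subtle" than the univariate one.

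\textbf{The radial density.}
Finally, \eqref{e:asym-radius-gt1} follows from the case $j=0$ of \eqref{e:asym-proj-gt1}. The pdf of $|X|$ is $g_{|X|}(r)=r^{d-1}\sint g(rv)\,\omega(\dd v)=r^{d-1}A(\sph{d-1})\,(\proj_0 g(r\,\cdot))$, since $\sint\phi\,\dd\omega=0$ for every $\phi\in\Cal H_{j,d}$ with $j\ge1$ while $\proj_0 g(r\,\cdot)$ is the constant $\frac1{A(\sph{d-1})}\sint g(rv)\,\omega(\dd v)$; and \eqref{e:proj} with $j=0$ gives $S_{0,k}=\proj_0(V^k)=\frac1{A(\sph{d-1})}\sint V^k\,\dd\omega$. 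Substituting the $j=0$ expansion, whose $k=0$ term vanishes because $1/\Gamma(0)=0$, and simplifying yields \eqref{e:asym-radius-gt1}, with error $r^{d-1}\cdot O(r^{-(n+1)\alpha-d})=O(r^{-(n+1)\alpha-1})$.
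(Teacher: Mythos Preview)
Your approach differs substantially from the paper's, and it contains a genuine gap.

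The paper's route is cleaner. It first rescales, writing $g(ru)=(2\pi)^{-d/2}r^{-d}G_t(u)$ with $t=r^{-\alpha}$ and $G_t(u)=(2\pi)^{-d/2}\int e^{-\iunit\langle u,z\rangle}e^{-t|z|^\alpha V(u_z)}\,\dd z$; the variable $r$ now enters only through the small parameter $t$, and $(\proj_j G_t)(u)=(-\iunit)^j\int_0^\infty s^{d/2}J_{j+b}(s)\proj_j(e^{-ts^\alpha V})(u)\,\dd s$ has no large oscillatory parameter in the Bessel argument. The expansion \eqref{e:asym-proj-gt1} thus reduces to a Taylor expansion of $\proj_j G_t$ about $t=0+$. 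The paper shows (Lemma~\ref{l:Gtlimit}) that each derivative $\partial_t^k(\proj_j G_t)(u)$ converges uniformly as $t\to0+$, the limit being precisely the $k$-th coefficient in \eqref{e:asym-proj-gt1}; an elementary fact (Lemma~\ref{l:derivative}) then makes $\proj_j G_t$ genuinely $C^n$ at $t=0+$, and Taylor's theorem with remainder finishes. The technical core is Lemma~\ref{l:Bessel-limit}: $\int_0^\infty J_\tau(s)s^{p-1}e^{-ts^\alpha\xi}\,\dd s$ converges as $t\to0+$ uniformly for $\xi$ in compacta of $\{\Re z>0\}$, proved by inserting the large-$s$ asymptotic of $J_\tau$ and evaluating $\int_1^\infty e^{\pm\iunit s}s^q e^{-ts^\alpha\xi}\,\dd s$ via contour deformation into a sector where $e^{\pm\iunit s}$ decays exponentially. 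No cutoff, no Watson lemma, and no distributional Hankel transform are needed.

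Your plan breaks at the main-term extraction. You claim $\int_0^\infty(1-\chi(s))s^{k\alpha+d/2}J_{j+b}(rs)\,\dd s=O(r^{-M})$ for all $M$ ``because $(1-\chi)s^{k\alpha+d/2}$ vanishes to infinite order at $s=1$, permitting unlimited integration by parts''. But this integral does not even exist: $(1-\chi)s^{k\alpha+d/2}$ is supported on $[1,\infty)$ and grows like $s^{k\alpha+d/2}$, while $|J_{j+b}(rs)|\asymp(rs)^{-1/2}$, so the integrand grows at infinity for every $k\ge0$ (recall $d\ge2$). Vanishing to infinite order at $s=1$ only kills the boundary term there; each integration by parts still leaves a polynomially growing integrand and an undefined boundary contribution at infinity. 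Your identity ``$\chi$-integral $=$ Abel-regularized full integral $-$ $(1-\chi)$-integral'' therefore has an undefined right-hand side, and the main terms are not extracted. Separately, as you yourself note, the $\rho_N$ remainder bound is only a plan; both suggested routes would require real work. The paper's $t=r^{-\alpha}$ reparametrization sidesteps both difficulties simultaneously: the Bessel factor loses its large parameter, and all that is needed is the existence of the $t\to0+$ limits of finitely many $t$-derivatives, supplied by one contour-integral lemma. Your derivation of \eqref{e:asym-radius-gt1} from the $j=0$ case is correct and matches the paper.
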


It would be more satisfactory to have a full asymptotic expansion of
$g$.  Eq.~\eqref{e:asym-proj-gt1} indicates that the expansion would
have the form as the series representation \eqref{e:mvs-pdf<1},
and in view of \cite {rvaceva:62:stmsp}, Theorem 4.2 or \eqref
{e:pdf-asym-lt1}, its first term would be $2\pi^{-1}
\alpha\Gamma(\alpha) \sin(\pi\alpha/2) r^{-\alpha-d} P$, where $P$ is
the spectral spherical density of $\mu$.  Thus, if $\mu$ has no
spectral spherical density, an asymptotic expansion of $g$ is unlikely
to exist.  On the other hand, even if $\mu$ has a polynomial spectral
spherical density $P$, except for $P$ being a constant, the method of
the paper has yet been able to yield the desired expansion.

\subsection{Proof of Proposition \ref{p:pdf-gt1}} \label{ss:p:pdf-gt1}
Given $x$, since $\rint e^{|\ip x z| - |z|^\alpha \Re(V(u_z))}\,\dd
z<\infty$ due to $\alpha>1$ and $\inf_{\sph{d-1}}\Re(V)>0$, by
dominated convergence followed by changing to polar coordinates,
\begin{align*}
  g(x)
  =(2\pi)^{-d}\sumzi n \frac{(-\iunit)^n}{n!}
  \rint \ip x z^n e^{-|z|^\alpha V(u_z)}\,\dd z.
\end{align*}
Treat each of the integrals on the \rhs the same way as in the proof
of Proposition \ref {p:asym-lt1}; see the argument from
\eqref{e:power-exp} to the end of that proof.  Then \eqref{e:pdf>1b}
follows.  Given $M>0$, to show the uniform and absolute convergence of
\eqref{e:pdf>1b} on $\{x{:}~|x|\le M\}$, for $0\le m\le \Flr{n/2}$,
similar to \eqref{e:Sjk-bound},
\begin{align*}
  \sup_{\sph{d-1}} |S_{n-2m,-(n+d)/\alpha}|
  \le
  \sqrt{c_{n-2m,d}}
  \sup_{\sph{d-1}} |V^{-(n+d)/\alpha}|
  \le
  \frac{\const\times (n-2m+1)^{d/2-1}}
  {[\inf_{\sph{d-1}}\Re(V)]^{(n+d)/\alpha}}.
\end{align*}
Then by Lemma \ref{l:ReV}, there is a constant $C>0$, such
that $\sup_{\sph{d-1}} |S_{n-2m,-(n+d)/\alpha}|\le C^n$ for all $n$
and $0\le m\le n/2$.  Consequently, for all $|x|\le M$ and $0\le m\le
n/2$,
\begin{align*}
  \Abs{
  \Gamma((n+d)/\alpha) \Grp{\frac{|x|}{2}}^n
  \frac{(-1)^m S_{n-2m,-(n+d)/\alpha}(u_x)}{m!\Gamma(n-m+d/2)}}
  \le
  \frac{\Gamma((n+d)/\alpha) (CM/2)^n}{m!\Gamma(n-m+d/2)},
\end{align*}
so the uniform and absolute convergence of the series \eqref{e:pdf>1b}
follows from
\begin{align} \label{e:pdf-gt1b-finite}
  \sum_{0\le m\le n/2} 
  \frac{\Gamma((n+d)/\alpha) C^n}{m!\Gamma(n-m+d/2)}
  <\infty \quad\text{for any~$C>0$}.
\end{align}
To see \eqref{e:pdf-gt1b-finite}, first, by log-convexity of the
Gamma function,
\begin{align*}
  m!\Gamma(n-m+d/2) \ge [\Gamma((n+1+d/2)/2)]^2 \quad\text{for~}
  0\le m\le n/2.
\end{align*}
Since $\Gamma(x)$ is increasing on $[1,\infty)$ (\cite{NIST:10},
\S5.4(iii)) and $d\ge2$, by Legendre's duplication formula
\eqref{e:L-dup}, for $n\ge1$,
$[\Gamma(n+1+d/2)/2]^2\ge \Gamma((n+2)/2)\Gamma((n+1)/2) =
2^{-n}n!\sqrt\pi$.  Then \eqref{e:pdf-gt1b-finite} follows by
$\alpha>1$.

From \eqref{e:pdf>1b}, the joint \pdf of $|X|$ and $u_X$ is
\begin{align*}
  r^{d-1} g(r\theta)
  =
  \frac{(r/2)^{d-1}}{\pi^{d/2} \alpha}
  \sum_{0\le m\le n/2} \Gamma((n+d)/\alpha) (r/2)^n
  \frac{(-1)^m S_{n-2m,-(n+d)/\alpha}(\theta)}{m!\Gamma(n-m+d/2)}. 
\end{align*}
Given $r>0$, since the series in \eqref{e:pdf>1b} is uniformly \ac on
$\{x{:}~|x|=r\}$, its integral over $\theta$ can be done term-by-term,
so that
\begin{align*}
  g_{|X|}(r)
  &=\sint r^{d-1} g(r\theta)\,\omega(\dd\theta)
    \\
  &=
    \frac{(r/2)^{d-1}}{\pi^{d/2} \alpha}
    \sum_{0\le m\le n/2} 
    \Gamma((n+d)/\alpha) (r/2)^n
    \sint\frac{(-1)^m S_{n-2m,-(n+d)/\alpha}(\theta)}
    {m!\Gamma(n-m+d/2)}\, \omega(\dd\theta). 
\end{align*}
Since $\sint S_{0, -(n+d)/\alpha}\,\dd\omega = \sint
V^{-(n+d)/\alpha}\, \dd\omega$ and $\sint S_{n-2m,-(n+d)/\alpha}
\,\dd\omega=0$ if $n>2m$, the \pdf of $|X|$ in \eqref{e:pdf-abs-gt1}
is obtained.

\subsection{Proof of Corollary \ref{c:pdf-gt1}}
Since $\zeta$ has characteristic exponent $|t|^{\alpha/2}
\exp\{-\iunit (\alpha/2)(\pi/2)\sgn t\}$, the Mellin transform of its 
\pdf, denoted $q$, yields that for all $s\in\Coms$ with $-1 < \Re s <\alpha/2$,
\begin{align*}
  \mean(\zeta^s) = \frac{2\Gamma(-2s/\alpha)}{\alpha\Gamma(-s)}
\end{align*}
(\cite{uchaikin:99:vsp}, section 5.6).  From \cite{uchaikin:99:vsp},
Theorem 5.4.1, $q(x) = O(x^{-(4-\alpha)/(4-2\alpha)} e^{-c
  x^{-\alpha/(2-\alpha)}})$, $x\to0+$, where $c$ is a constant.  Then
for all $s$ with $\Re s<\alpha/2$,
$\zeta^s$ is integrable and so the above identity holds.  Then, as the
summation and integration in \eqref{e:pdf-abs-gt1} are
interchangeable,
\begin{align*}
  g_{|X|}(r)
  &= \nth{2\pi^{d/2}}\sint
    \Sbr{\sumzi n \frac{(-1)^n}{n!}
    \frac{2\Gamma((2n+d)/\alpha)}{\alpha\Gamma(n+d/2)}
    \Grp{\frac{r}{2}}^{2n+d-1}
    \frac{1}{V^{(2n+d)/\alpha}}}\dd\omega
    \\
  &= \nth{2\pi^{d/2}}\sint
    \Sbr{\sumzi n \frac{(-1)^n}{n!}
    \mean(\zeta^{-n-d/2})
    \Grp{\frac{r}{2}}^{2n+d-1}
    \frac{1}{V^{(2n+d)/\alpha}}}\dd\omega,
\end{align*}
yielding \eqref{e:pdf-abs-gt1b}.

\subsection{Proof of Lemma \ref{e:V-R}}
By Lemma \ref{l:ReV}, $\inf_{\sph{d-1}} \Re(V)>0$, giving
$\varrho_0>0$.  It is easy to check that if $\Re z>0$, then $|z -
\varrho(z)| = \varrho(z)$.  Geometrically, $\varrho(z)$ is the radius
of the unique circle that is centered on $\Reals$ and passes both $0$
and $z$.  Then for any $r>\varrho(z)$, the circle with center and
radius both equal to $r$ encircles $z$, i.e., $|z - r|<r$.  Provided
that $\varrho_0<\infty$, from the geometric interpretation, the last
claim of the lemma holds.  Thus, it only remains to show the upper
bound on $\varrho_0$.  From \eqref{e:che-ss} and $\alpha\in (1,2)$,
\begin{gather*}
  |V(u)|<
  \nth{|\cos(\pi\alpha/2)|}
  \sint |\ip u v|^\alpha \,\lambda(\dd v)
  <
  \nth{|\cos(\pi\alpha/2)|}
  \sint |\ip u v|\,\lambda(\dd v),
  \\
  \Re(V(u)) = 
  \sint |\ip u v|^\alpha \,\lambda(\dd v)
  >
  \sint \ip u v^2\,\lambda(\dd v),
\end{gather*}
where all the inequalities are strict.  By Cauchy--Schwarz
inequality, the first line yields
\begin{align*}
  |V(u)|^2 <
  \frac{\lambda(\sph{d-1})}{|\cos(\pi\alpha/2)|^2} \sint
  \ip u v^2\, \lambda(\dd v).
\end{align*}
Then for each $u\in\sph{d-1}$,
\begin{align*}
  \varrho(V(u)) = \frac{|V(u)|^2}{2\Re(V(u))}
  < \frac{\lambda(\sph{d-1})}{2|\cos(\pi\alpha/2)|^2}
  =\frac{\lambda(\sph{d-1})}{1+\cos(\pi\alpha)}.
\end{align*}
By the continuity of $\varrho(V)$ on $\sph{d-1}$, the upper bound on
$\varrho_0$ follows.

\subsection{Proof of Theorem \ref{t:pdf-gt1}}
By the definitions of $S_{n-2m, -(n+d)/\alpha}$ and $V_*$,
\begin{align*}
  &\Gamma((n+d)/\alpha) S_{n-2m, -(n+d)/\alpha}
  =(-\iunit)^{n-2m} \Gamma((n+d)/\alpha)
    \proj_{n-2m}(V^{-(n+d)/\alpha})
  \\
  &\hspace{3cm}=
    (-\iunit)^{n-2m} R^{-(n+d)/\alpha} \Gamma((n+d)/\alpha)
    \proj_{n-2m}((1-V_*/R)^{-(n+d)/\alpha}).
\end{align*}
Then by $S^*_{n-2m,k} = (-\iunit)^{n-2m} \proj_{n-2m}(V^k_*)$ and
$\sumzi k \frac{(c)_k}{k!} (-z)^k = (1+z)^{-c}$ for $c>0$ and $|z|<1$,
the \rhs equals
\label{e:gt1S}
\begin{align*}
  &(-\iunit)^{n-2m} R^{-(n+d)/\alpha}\, \Gamma((n+d)/\alpha)\,
    \proj_{n-2m} \Sbr{
    \sumzi k \frac{((n+d)/\alpha))_k}{k!}
    (V_*/R)^k
    }
  \\
  &=
  \sumzi k \frac{\Gamma(k+(n+d)/\alpha)}{k! R^{k+(n+d)/\alpha}}
    S^*_{n-2m,k}.
\end{align*}
This combined with \eqref{e:pdf>1b} then yields \eqref{e:pdf>1}.
Put $D=\sup_{\sph{d-1}}|V_*|$.  Similar to \eqref{e:Sjk-bound},
for any $n\ge0$ and $k\ge0$,  \label{e:S*}
\begin{align*}
  \sup_{\sph{d-1}} |S^*_{n-2m,k}|
  \le
  \sqrt{c_{n-2m,d}}\sup_{\sph{d-1}} |V_*|^k
  \le \const\times (n+1)^{d/2} D^k.
\end{align*}
Since $D<R$, then for any $M>0$,
\begin{align*}
  &
    \sumzi n M^n
    \sum^{\Flr{n/2}}_{m=0} \frac{1}{m!\Gamma(n-m+d/2)}
    \sumzi k \frac{\Gamma(k+(n+d)/\alpha)}{k! R^{k+(n+d)/\alpha}}
    \sup_{\sph{d-1}} |S^*_{n-2m,k}|
  \\
  &\le
    \const\times\sumzi n (n+1)^{d/2} M^n \sum^{\Flr{n/2}}_{m=0}
    \frac{\Gamma((n+d)/\alpha)}{m!\Gamma(n-m+d/2)
    R^{(n+d)/\alpha}}
    \sumzi k \frac{((n+d)/\alpha)_k}{k!} \Grp{\frac{D}{R}}^k
    \\
  &=
    \const\times\sumzi n (n+1)^{d/2} M^n \sum^{\Flr{n/2}}_{m=0}
    \frac{\Gamma((n+d)/\alpha)}{m!\Gamma(n-m+d/2)}
    (R-D)^{-(n+d)/\alpha},
\end{align*}
which is finite by similar argument for \eqref{e:pdf-gt1b-finite}.  It
follows that the series in \eqref{e:pdf>1} is uniformly \ac in
$\{x{:}~|x|\le M\}$.  Finally, \eqref{e:pdf-abs>1} follows by similar
argument for \eqref{e:pdf-abs-gt1}.

\subsection{Proof of Proposition
  \ref{p:asym-gt1}}  \label{ss:asym-gt1}
For $r>0$ and $u\in\sph{d-1}$, by change of variable,
\begin{align} \label{e:g-G}
  g(ru)
  =
  (2\pi)^{-d} r^{-d}\rint e^{-\iunit \ip u z}
  e^{-r^{-\alpha} |z|^\alpha V(u_z)}\,\dd z
  =
  (2\pi)^{-d/2} r^{-d} G_{r^{-\alpha}}(u),
\end{align}
where for $t>0$,
\begin{align*} 
  G_t(u)
  =
  (2\pi)^{-d/2}
  \rint e^{-\iunit \ip u z} e^{-t |z|^\alpha V(u_z)}\,\dd z.
\end{align*}
From \eqref{e:proj-pdf}, for $j\in\Ints_+$,
\begin{align} \label{e:proj-Gt}
  (\proj_j G_t)(u)
  =(-\iunit)^j \intzi s^{d/2} J_{j+b}(s)
  \proj_j(e^{-ts^\alpha V})(u)\,\dd s.
\end{align}

\begin{lemma} \label{l:Gtlimit}
  Given $k\in\Ints_+$, as $t\to0+$,
  \begin{align*}
    \frac{\partial^k}{\partial t^k} (\proj_j G_t)(u)
    \to
    \frac{2^{d/2}(-2^\alpha)^k  \Gamma((j+k\alpha+d)/2)}
    {\Gamma((j-k\alpha)/2)} S_{j,k}(u)
  \end{align*}
  uniformly in $u\in\sph{d-1}$.
\end{lemma}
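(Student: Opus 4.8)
The plan is to begin from \eqref{e:proj-Gt} and differentiate $k$ times in $t$ under the integral sign. For $t$ in a compact subset of $(0,\infty)$ the exponent $e^{-ts^\alpha V(v)}$ decays like $e^{-ts^\alpha\rho_0}$, where $\rho_0=\inf_{\sph{d-1}}\Re V>0$ by Lemma \ref{l:ReV} and nondegeneracy; together with $\partial_t^k e^{-ts^\alpha V(v)}=(-s^\alpha V(v))^k e^{-ts^\alpha V(v)}$, the bound $|\wt C^b_j|\le1$ from \eqref{e:Gegenbauer}, and boundedness of $V$, this lets one differentiate under both the $\omega(\dd v)$-integral inside $\proj_j$ and the $\dd s$-integral, giving
\[
  \frac{\partial^k}{\partial t^k}(\proj_j G_t)(u)
  =(-\iunit)^j(-1)^k\intzi s^{k\alpha+d/2}J_{j+b}(s)\,\proj_j\!\big(V^k e^{-ts^\alpha V}\big)(u)\,\dd s .
\]
Inserting the kernel representation \eqref{e:proj} of $\proj_j$ and applying Fubini (the double integral is absolutely convergent for fixed $t>0$ by the same decay), this becomes
\[
  (-\iunit)^j(-1)^k\,\frac{c_{j,d}}{A(\sph{d-1})}\sint\wt C^b_j(\ip u v)\,V(v)^k\,I_t(V(v))\,\omega(\dd v),
  \qquad
  I_t(w)=\intzi s^{k\alpha+d/2}J_{j+b}(s)\,e^{-ts^\alpha w}\,\dd s .
\]

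The crux is the behaviour of the scalar integral $I_t(w)$ as $t\to0+$, uniformly over $w$ in the compact set $K$ of values of $V$, on which $\Re w\ge\rho_0>0$ and $|\arg w|\le\theta_\alpha:=\pi(2-\alpha)/2<\pi/2$ (the bound on $\arg V$ being immediate from \eqref{e:che-ss3}). One cannot simply set $t=0$, since for large $k$ the oscillatory tail of $J_{j+b}$ makes $\intzi s^{k\alpha+d/2}J_{j+b}(s)\,\dd s$ even conditionally divergent. Instead I would substitute the Mellin--Barnes representation $e^{-x}=\tfrac1{2\pi\iunit}\int_{(c)}\Gamma(\zeta)x^{-\zeta}\,\dd\zeta$ (valid for $\Re\zeta=c>0$, $|\arg x|<\pi/2$) with $x=ts^\alpha w$, exchange it with the $\dd s$-integral, and evaluate the inner integral by the Weber--Schafheitlin formula $\intzi s^{\mu}J_{\nu}(s)\,\dd s=2^{\mu}\Gamma(\tfrac{\nu+\mu+1}{2})/\Gamma(\tfrac{\nu-\mu+1}{2})$; both the formula and the exchange are legitimate once $c$ is chosen in the nonempty range $k+\tfrac{d+1}{2\alpha}<c<k+\tfrac{j+d}{\alpha}$. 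This yields
\[
  I_t(w)=\frac1{2\pi\iunit}\int_{(c)}\Gamma(\zeta)\,(tw)^{-\zeta}\,
  \frac{2^{k\alpha+d/2-\alpha\zeta}\,\Gamma\!\big(\tfrac{j+d+k\alpha-\alpha\zeta}{2}\big)}{\Gamma\!\big(\tfrac{j-k\alpha+\alpha\zeta}{2}\big)}\,\dd\zeta .
\]

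Shifting the contour to $\Re\zeta=c'$ for a small $c'\in(-1,0)$ crosses only the simple pole of $\Gamma(\zeta)$ at $\zeta=0$, whose residue is exactly $R:=2^{k\alpha+d/2}\Gamma((j+k\alpha+d)/2)/\Gamma((j-k\alpha)/2)$ (which vanishes when $(j-k\alpha)/2$ is a nonpositive integer, as it must). On the line $\Re\zeta=c'$ the factor $(tw)^{-\zeta}$ is $O(t^{|c'|})$ uniformly for $w\in K$, while $\Gamma(\zeta)$ contributes decay $e^{-\pi|\Im\zeta|/2}$ that dominates both the growth $e^{\theta_\alpha|\Im\zeta|}$ coming from $(tw)^{-\zeta}$ and the merely polynomial growth of the Gamma ratio (whose $e^{-\pi\alpha|\Im\zeta|/4}$ factors cancel by Stirling along verticals); hence $I_t(w)=R+O(t^{|c'|})$ uniformly in $w\in K$, so $I_t$ is uniformly bounded on $(0,1]$ and $I_t(V(v))\to R$ uniformly in $v$. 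Feeding this back into the formula of the first paragraph, bounded convergence in $v$ together with $|\wt C^b_j|\le1$ give, uniformly in $u$,
\[
  \frac{\partial^k}{\partial t^k}(\proj_j G_t)(u)\to(-\iunit)^j(-1)^kR\,\frac{c_{j,d}}{A(\sph{d-1})}\sint\wt C^b_j(\ip u v)V(v)^k\,\omega(\dd v)=(-\iunit)^j(-1)^kR\,(\proj_j V^k)(u);
\]
since $\proj_j V^k=\iunit^j S_{j,k}$ and $(-\iunit)^j(-1)^kR\,\iunit^j=(-1)^kR=2^{d/2}(-2^\alpha)^k\Gamma((j+k\alpha+d)/2)/\Gamma((j-k\alpha)/2)$, this is precisely the asserted limit. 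The main obstacle is exactly the estimate for $I_t(w)$: extracting the $t\to0$ limit of a Bessel integral whose tail is only conditionally (or not even) convergent, uniformly over $K$. The Mellin--Barnes device makes this clean; the alternative would be to split the $s$-integral at a finite point and integrate by parts repeatedly against the large-argument asymptotics of $J_{j+b}$, which works but is considerably messier.
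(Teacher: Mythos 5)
Your proposal is correct, and its outer frame is the same as the paper's: starting from \eqref{e:proj-Gt}, differentiate $k$ times under the $s$- and $v$-integrals (justified by $\inf_{\sph{d-1}}\Re(V)>0$ and the boundedness of $V$), reduce the problem to the uniform limit as $t\to0+$ of the scalar integral $I_t(w)=\intzi s^{k\alpha+d/2}J_{j+b}(s)e^{-ts^\alpha w}\,\dd s$ over the range of $V$, and then integrate against the zonal kernel to recover $S_{j,k}$. Where you genuinely differ is in how that scalar limit is proved. The paper isolates it as Lemma \ref{l:Bessel-limit}: it expands $J_{j+b}$ by its large-argument asymptotics into finitely many oscillatory terms plus an $O(r^{-N-1})$ remainder, rotates the contour of each oscillatory piece onto a ray $e^{\pm\iunit\theta_*}$ with $\alpha\theta_*+|\arg\xi|<\pi/2$ to gain exponential decay and uniform convergence, and, since the limit obtained this way is only known to be a constant independent of $\xi$, identifies its value by an auxiliary two-parameter integral $f(s,t)$ and the classical limit of $\intzi J_\tau(r)r^{p-1}e^{-tr}\,\dd r$. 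You instead insert the Mellin--Barnes kernel for $e^{-x}$ (legitimate since $|\arg V|\le\pi(2-\alpha)/2<\pi/2$ uniformly, which indeed follows from \eqref{e:che-ss3} for $\alpha\in(1,2)$), evaluate the Mellin transform of $J_{j+b}$ in closed form on an absolutely convergent strip, and shift the contour across the simple pole at $\zeta=0$, whose residue is exactly the limiting constant; the shifted-line integral is $O(t^{|c'|})$ uniformly in $w$ because the $e^{-\pi|\Im\zeta|/2}$ decay of $\Gamma(\zeta)$ dominates $e^{\theta_\alpha|\Im\zeta|}$ and the polynomially growing Gamma ratio. Your route delivers the limit value and an explicit rate (any $O(t^{1-\epsilon})$) in one stroke, with no separate identification step; the paper's route uses only Bessel asymptotics and contour rotation and yields uniformity over arbitrary compact subsets of $\{\Re z>0\}$ as a reusable standalone lemma. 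Two cosmetic points: your lower bound $k+\frac{d+1}{2\alpha}$ on $c$ is the absolute-convergence threshold (the formula itself holds down to $k+\frac{d-1}{2\alpha}$), which is fine since you want Fubini, and the single-Bessel integral you invoke is the Mellin transform of $J_\nu$ rather than the Weber--Schafheitlin integral proper; neither affects the argument.
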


\begin{lemma} \label{l:derivative}
  Suppose $f: (0,1]\to \Reals$ is $n$ times differentiable such that
  for each $0\le k\le n$, $f\Sp k(t)$ converges as $t\to0+$.  Let the
  limits be $a_k$.  Then by defining $f(0)=a_0$, $f$ is $n$ times
  differentiable at $0$ with (one-sided) derivatives $f\Sp k(0) =
  a_k$, $0\le k\le n$.
\end{lemma}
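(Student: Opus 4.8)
The plan is to argue by induction on $n$. The base case $n=0$ says nothing more than that, having set $f(0)=a_0$, the extended $f$ is continuous at $0$; this is immediate from the definition of the limit $a_0$. The whole content lies in promoting a statement about limits of derivatives on $(0,1]$ to a statement about derivatives at the endpoint, and the natural tool for that is the mean value theorem.

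First I would isolate the case $n=1$. Assume $f$ is differentiable on $(0,1]$ with $f(t)\to a_0$ and $f\Sp1(t)\to a_1$ as $t\to0+$, and set $f(0)=a_0$. Then $f$ is continuous on $[0,t]$ and differentiable on $(0,t)$ for every $t\in(0,1]$, so the mean value theorem supplies $\xi_t\in(0,t)$ with $(f(t)-f(0))/t=f\Sp1(\xi_t)$. Letting $t\to0+$ forces $\xi_t\to0+$, hence $f\Sp1(\xi_t)\to a_1$, so the one-sided difference quotient converges and $f\Sp1(0)$ exists and equals $a_1$. (The same computation shows $f\Sp1$ is continuous at $0$.)

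For the step from $n-1$ to $n$, given $f$ as in the hypothesis I would apply the induction hypothesis at order $n-1$ (whose assumptions hold since $f\Sp k(t)\to a_k$ for $0\le k\le n-1$) to conclude that $f$ is $(n-1)$ times differentiable at $0$ with $f\Sp k(0)=a_k$ for $0\le k\le n-1$. In particular $g:=f\Sp{n-1}$ is now defined on all of $[0,1]$, satisfies $g(0)=a_{n-1}$, is differentiable on $(0,1]$ with $g\Sp1=f\Sp n$, and has $g(t)\to a_{n-1}$ and $g\Sp1(t)\to a_n$ as $t\to0+$. Applying the $n=1$ case to $g$ then yields $g\Sp1(0)=a_n$, i.e.\ $f\Sp n(0)=a_n$, which closes the induction.

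I do not expect a genuine obstacle here. The only points needing care are (i) verifying the hypotheses of the mean value theorem at each stage, which is exactly where the convention $f(0)=a_0$ (and, inductively, $f\Sp k(0)=a_k$) is used, and (ii) organizing the induction so that the lower-order endpoint values are already available before passing to $g=f\Sp{n-1}$. As a byproduct the argument shows each $f\Sp k$ is continuous at $0$, so the extension is actually $C^n$ on $[0,1]$, although the lemma only claims $n$-fold differentiability at $0$, which is all that is needed for Lemma \ref{l:Gtlimit} to deliver the expansion in Proposition \ref{p:asym-gt1}.
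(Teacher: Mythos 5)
Your proposal is correct and follows essentially the same route as the paper: the mean value theorem applied on $[0,t]$ (using the endpoint value $f(0)=a_0$) gives $f\Sp1(0)=a_1$, and higher orders are handled by induction, exactly as in the paper's proof. Your write-up is just a more detailed version of that same argument.
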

\begin{proof}
  For each $t\in (0,1)$, by the mean value theorem, there is $s\in
  (0,t)$ such that $f'(s) = [f(t)-f(0)]/t$.  As $t\to0+$, $s\to0+$ and
  so by assumption, $[f(t)-f(0)]/t\to a_1$.  This shows $f'(0)=a_1$.
  The proof for higher order derivatives of $f$ at 0 follows by
  induction.
\end{proof}

Assuming Lemma \ref{l:Gtlimit} is true for now, by Lemma
\ref{l:derivative} and Taylor's theorem, given $n\ge0$, for
$u\in\sph{d-1}$ and $t>0$,
\begin{align*}
  (\proj_j G_t)(u)
  =
  \sum^n_{k=0} 
  \frac{2^{d/2}(-2^\alpha)^k  \Gamma((j+k\alpha+d)/2)}
  {\Gamma((j-k\alpha)/2)} S_{j,k}(u) \frac{t^k}{k!}
  +
  \frac{\partial^{k+1}}{\partial t^{k+1}} (\proj_j
  G_{t^*})(u) \frac{t^{k+1}}{(k+1)!},  
\end{align*}
where $t^*=t^*(u)$ is some number in $(0,t)$.  As $t\to0+$,
$t^*\to0+$, 
so by Lemma \ref{l:Gtlimit}, the remainder term is $O(t^{k+1})$ with
the implicit constant being uniform in $u$.  From the relationship
\eqref{e:g-G}, \eqref{e:asym-proj-gt1} follows.  Since by
\eqref{e:proj},
\begin{align*}
  g_{|X|}(r) = r^{d-1} \sint g(r u)\,\omega(\dd u)
  = r^{d-1} 
  A(\sph{d-1})\proj_0 g(r\,\cdot),
\end{align*}
by \eqref{e:asym-proj-gt1},
\begin{align*}
  g_{|X|}(r) = 
  A(\sph{d-1})
  \sum^n_{k=0} \frac{(-2^\alpha)^k \pi^{-d/2}}{k! r^{k\alpha+d}}
  \frac{\Gamma((1+k\alpha+d)/2)}{\Gamma((1-k\alpha)/2)} S_{0,k}
  + O(r^{-(n+1)\alpha-1}).
\end{align*}
Since $S_{0,k} = \nth{A(\sph{d-1})}\sint V^k\,\dd\omega$,
\eqref{e:asym-radius-gt1} follows, completing the proof of Proposition
\ref{p:asym-gt1}.

To prove Lemma \ref{l:Gtlimit}, the following result is needed.
\begin{lemma} \label{l:Bessel-limit}
  Let $\alpha\in (1,2)$.  Given $\tau>0$, $p>0$, for $\xi\in\Coms$
  with $\Re\xi>0$ and $s>0$, let
  \begin{align*}
    f_s(\xi) = \intzi J_\tau(r) r^{p-1} e^{-s r^\alpha\xi}\,\dd r.
  \end{align*}
  Then, as $s\to0+$,
  \begin{align} \label{e:Bessel-limit}
    f_s(\xi)\to
    \frac{2^{p-1} \Gamma((\tau+p)/2)}{\Gamma(1 + (\tau-p)/2)}
  \end{align}
  uniformly in any compact set of $\xi\in\{z\in\Coms{:}\ \Re z>0\}$.
\end{lemma}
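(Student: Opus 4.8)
The plan is to realise $f_s(\xi)$ as a Mellin--Barnes integral and push the contour past the simple pole at the origin; the limit $2^{p-1}\Gamma((\tau+p)/2)/\Gamma(1+(\tau-p)/2)$ is exactly the (meromorphically continued) Mellin transform of $J_\tau$ evaluated at $p$, so the statement is in effect an Abelian theorem: $f_s$ is a regularisation of the possibly divergent integral $\int_0^\infty J_\tau(r)r^{p-1}\,\dd r$ which converges to its regularised value. First I would record two preliminaries. From \eqref{e:Bessel}, $|J_\tau(r)|\le Cr^\tau$ for $r\le1$, and from the standard large-argument estimate $|J_\tau(r)|\le Cr^{-1/2}$ for $r\ge1$; hence for every $s>0$ and every $\xi$ with $\Re\xi>0$ the integrand of $f_s(\xi)$ is $O(r^{\tau+p-1})$ near $0$ and $O(r^{p-3/2}e^{-sr^\alpha\Re\xi})$ near $\infty$, so $f_s(\xi)$ is an absolutely convergent integral. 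Second, as the $\delta\dto0$ limit of \cite{andrews:99:cup}, Eq.~(9.10.5), already used in the proof of Theorem \ref{t:pdf-lt1}, the Mellin transform of the Bessel function is
\begin{align*}
  M(z):=\int_0^\infty J_\tau(r)\,r^{z-1}\,\dd r
  = \frac{2^{z-1}\Gamma((\tau+z)/2)}{\Gamma(1+(\tau-z)/2)},
  \qquad -\tau<\Re z<\tfrac32,
\end{align*}
the right-hand side being meromorphic in $z$ with poles only at $z=-\tau-2n$, $n\in\Ints_+$; note $M(p)$ equals the constant on the right of \eqref{e:Bessel-limit}.

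For the main step, observe $\Re(s\xi r^\alpha)>0$, so the Cahen--Mellin formula gives $e^{-s\xi r^\alpha}=\frac1{2\pi\iunit}\int_{\Re\zeta=c}\Gamma(\zeta)(s\xi r^\alpha)^{-\zeta}\,\dd\zeta$ for any $c>0$. I would fix $c$ in the interval $\bigl(\max(0,(p-\tfrac12)/\alpha),\,(p+\tau)/\alpha\bigr)$, which is nonempty because $\tau>0$; then $\int_0^\infty|J_\tau(r)|r^{p-1-\alpha c}\,\dd r<\infty$ and, using $|\arg\xi|<\pi/2$, $\int_{\Reals}|\Gamma(c+\iunit t)|e^{t\arg\xi}\,\dd t<\infty$, so Fubini's theorem (together with the classical formula for $M$, valid on $\Re\zeta=c$ by the choice of $c$) yields
\begin{align*}
  f_s(\xi)=\frac1{2\pi\iunit}\int_{\Re\zeta=c}\Gamma(\zeta)\,(s\xi)^{-\zeta}\,M(p-\alpha\zeta)\,\dd\zeta .
\end{align*}
In the strip $-\rx\le\Re\zeta\le c$, with $0<\rx<1$ fixed, the integrand is meromorphic with a single pole, the simple pole of $\Gamma$ at $\zeta=0$, of residue $(s\xi)^0M(p)=M(p)$; the poles $\zeta=(p+\tau+2n)/\alpha$ of $M(p-\alpha\zeta)$ all lie to the right of $\Re\zeta=c$. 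Stirling's formula bounds $|\Gamma(\zeta)|$ by a multiple of $|\Im\zeta|^{\Re\zeta-1/2}e^{-\pi|\Im\zeta|/2}$ and shows $M(p-\alpha\zeta)$ grows only polynomially along vertical lines, while $|(s\xi)^{-\zeta}|\le|s\xi|^{-\Re\zeta}e^{|\Im\zeta|\,|\arg\xi|}$ with $|\arg\xi|<\pi/2$; hence the contributions of the horizontal segments at $\Im\zeta=\pm T$ tend to $0$ as $T\to\infty$, and the contour may be moved to $\Re\zeta=-\rx$, picking up the residue:
\begin{align*}
  f_s(\xi)=M(p)+\frac1{2\pi\iunit}\int_{\Re\zeta=-\rx}\Gamma(\zeta)\,(s\xi)^{-\zeta}\,M(p-\alpha\zeta)\,\dd\zeta .
\end{align*}

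It remains to bound the last integral uniformly over a compact set $K\subset\{\Re z>0\}$. On $K$ one has $|\arg\xi|\le\theta_0<\pi/2$ and $|\xi|\le M_0<\infty$, so along $\Re\zeta=-\rx$, $|(s\xi)^{-\zeta}|\le s^{\rx}M_0^{\rx}e^{|\Im\zeta|\theta_0}$, while $|\Gamma(\zeta)M(p-\alpha\zeta)|=O\bigl(|\Im\zeta|^{\,p+(\alpha-1)\rx-3/2}e^{-\pi|\Im\zeta|/2}\bigr)$; since $\theta_0<\pi/2$ the product is integrable in $\Im\zeta$ with integral at most a constant depending only on $K,\rx,\tau,p,\alpha$. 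Therefore the last integral is $O(s^{\rx})$ uniformly in $\xi\in K$, and letting $s\to0+$ gives $f_s(\xi)\to M(p)$ uniformly on $K$, which is \eqref{e:Bessel-limit}.

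The main obstacle is the bookkeeping in the contour shift: checking that no pole other than $\zeta=0$ is crossed (this is where $c<(p+\tau)/\alpha$, i.e., $\tau>0$, enters) and that the connecting horizontal integrals vanish, together with the Fubini justification at the start. It is essential that the argument cover $p\ge\tfrac32$ as well, for which $\int_0^\infty J_\tau(r)r^{p-1}\,\dd r$ diverges and $M(p)$ is defined only by analytic continuation, so a direct dominated-convergence argument on the original integral is not available; this is also why a subordination argument (writing $e^{-s\xi r^\alpha}=\mean e^{-s^{2/\alpha}\xi^{2/\alpha}\zeta r^2}$ with $\zeta$ the $(\alpha/2)$-stable subordinator and reducing to a known $\int_0^\infty J_\tau(r)r^{p-1}e^{-\beta r^2}\,\dd r$) does not suffice: it only reaches $\xi$ in the sub-sector $|\arg\xi|\le\pi\alpha/4$, whereas Lemma \ref{l:Gtlimit} needs $\xi=V(v)$ ranging over an arbitrary compact subset of the right half-plane.
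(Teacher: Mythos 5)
Your Mellin--Barnes argument is correct, and it takes a genuinely different route from the paper. The paper's proof works directly with the large-argument expansion of $J_\tau$ (\cite{andrews:99:cup}, Eq.~(4.8.5)): it splits $f_s(\xi)$ into an absolutely convergent part (the integral over $(0,1)$ plus the remainder $R_N$), handled by dominated convergence, and finitely many oscillatory tail integrals $\int_1^\infty e^{\pm\iunit r}r^q e^{-sr^\alpha\xi}\,\dd r$, which are treated by rotating the ray of integration by an angle $\theta_*$ with $\alpha\theta_*+|\arg\xi|<\pi/2$ on the compact set; this yields uniform convergence to a constant independent of $\xi$, and the constant is then identified only indirectly, by specializing to $\xi=1$, inserting an extra damping factor $e^{-tr}$, and invoking \cite{andrews:99:cup}, Eq.~(9.10.5). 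You instead represent $e^{-s\xi r^\alpha}$ by the Cahen--Mellin integral, justify Fubini correctly (your choice of $c$ with $p-\alpha c\in(-\tau,1/2)$ makes the double integral absolutely convergent and keeps the Mellin transform $M$ of $J_\tau$ inside its strip of validity), and shift the contour across the simple pole at $\zeta=0$, whose residue is exactly the right-hand side of \eqref{e:Bessel-limit}; your bound on the shifted line is also right, since the two Gamma factors inside $M(p-\alpha\zeta)$ have imaginary parts of equal modulus, so their exponential factors cancel and $M$ grows only polynomially, while $|\Gamma(\zeta)|e^{|\Im\zeta|\theta_0}$ decays like $e^{-(\pi/2-\theta_0)|\Im\zeta|}$ with $\theta_0<\pi/2$ on the compact set. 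What your approach buys is a direct identification of the limit as a residue (no auxiliary two-parameter comparison) together with a quantitative, uniform rate $O(s^{\rx})$, and in fact it nowhere uses $\alpha\in(1,2)$, only $\alpha>0$; what it costs is reliance on the Mellin transform of $J_\tau$ on its full strip and Stirling estimates along vertical lines, whereas the paper's argument needs only the Bessel asymptotic expansion and an elementary contour rotation. Your closing remark is also apt: since in the application $p=d/2+k\alpha$ can exceed $3/2$, the limit must be understood as a regularized (analytically continued) value, which both your residue computation and the paper's rotation-plus-identification step accommodate, while a naive dominated-convergence or subordination argument would not.
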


For $\alpha=1$, the limit is essential for the proof of Theorem
\ref{t:pdf-lt1}; see the passage below \eqref{e:I_r}.  For $\alpha=2$,
the limit is also known (cf.\ \cite{andrews:99:cup}, Theorem 4.11.7
followed by Theorem 4.2.2).

\begin{proof}
  Let $r_0 =  \tau\pi/2+\pi/4$.  By \cite {andrews:99:cup},
  Eq.~(4.8.5), given $N\in\Nats$, $J_\tau(r) =  \Re(k_N(r)) + R_N(r)$,
  $r>0$, where
  \begin{align*}
    k_N(r)
    =
    \sqrt{\frac2{\pi r}}  \sum^N_{n=0} e^{\iunit(r - r_0)}
    \frac{(1/2-\tau)_n (1/2+\tau)_n}{n! (2 r\iunit)^n}
  \end{align*}
  and $R_N(r) = O(r^{-N-1})$ as $r\toi$.  Then
  \begin{align*}
    f_s(\xi)
    =
    \int^1_0 J_\tau(r) r^{p-1} e^{-s r^\alpha \xi}\,\dd r
    + \int^\infty_1 R_N(r) r^{p-1} e^{-s r^\alpha \xi}\,\dd r
    + \int^\infty_1 \Re(k_N(r)) r^{p-1} e^{-s r^\alpha \xi}\,\dd r.
  \end{align*}
  Since $\Re\xi>0$, $|e^{-s r^\alpha \xi}| \le1$.  By
  \eqref{e:Bessel3}, $|J_\tau(r)| r^{p-1} = O(r^{\tau+p-1})$ as
  $r\to0+$.  \label{e:B-asymp0} Also, clearly $|R_N(r)
  r^{p-1}| = O(r^{p-2-N})$ as $r\toi$.  Fix $N>p-1$.  Then by
  dominated convergence, as $s\to0+$, the sum of the first two
  integrals on the \rhs tends to $\int^1_0 J_\tau(r) r^{p-1}\,\dd r +
  \int^\infty_1 R_N(r) r^{p-1}\,\dd r$.  It is easy to see that the
  convergence is uniform in any compact set of $\xi\in \{z{:}\
  \Re z>0\}$.  For the third integral, since it is equal to
  \begin{align*}
    \nth{\sqrt{2\pi}}
    \sum^N_{n=0} \frac{(1/2-\tau)_n (1/2+\tau)_n}{n! 2^n}
    \int^\infty_1 [(-\iunit)^n e^{\iunit(r-r_0)}
    + \iunit^n e^{-\iunit(r-r_0)}]
    r^{p-n-3/2} e^{-s r^\alpha\xi}\, \dd r,
  \end{align*}
  it suffices to consider, for any given $q\in\Reals$, the convergence
  of the integrals
  \begin{align*}
    \int^\infty_1 e^{\pm\iunit r} r^q e^{-s r^\alpha \xi}\,\dd r.
  \end{align*}

  Put $H_s(z)=e^{\iunit z} z^q e^{-s z^\alpha \xi}$.  Then $H_s(z)$ is
  analytic on the right half of $\Coms$.  Given a compact
  $D\subset\{z{:}\ \Re z>0\}$, fix $\theta_*\in (0,
  \pi/2)$ such that $\alpha\theta_* + |\arg \xi| < \pi/2$ for all
  $\xi\in D$.  Given $R>1$, consider the contour consisting of
  $[1,R]$, $\{r e^{\iunit\theta_*}{:}\ r\in [1, R]\}$, $C =
  \{e^{\iunit\theta}{:}\ \theta \in [0, \theta_*]\}$, and $C_R = \{R
  e^{\iunit \theta}{:}\ \theta\in [0,\theta_*]\}$.  Fixing $s>0$, for
  $z\in C_R$, $|H_s(z)|= R^q e^{- R\sin(\arg z) - s R^\alpha
    |\xi|\cos(\alpha\arg z + \arg \xi)}$.  By $|\alpha\arg z + \arg
  \xi| \le \alpha\theta_* + |\arg \xi| < \pi/2$, $\inf_{z\in C_R}
  \cos(\alpha \arg z + \arg \xi)>0$.  Then by $\alpha>1$, as $R\toi$,
  $\int_{C_R} H_s(z)\,\dd z\to0$, and the integral of $H_s(z)$ along
  the ray from 1 to $R$ and the integral along the ray from
  $e^{\iunit\theta_*}$ to $Re^{\iunit\theta_*}$ both converge.
  Therefore,
  \begin{align*}
    \int^\infty_1 H_s(r)\,\dd r
    &=
      \Grp{\int_C + \int^{\infty\cdot e^{\iunit\theta_*}}_{1\cdot
      e^{\iunit\theta_*}}} H_s(z)\,\dd z
    \\
    &=
      \iunit \int^{\theta_*}_0 e^{\iunit e^{\iunit\theta} +
      \iunit(q+1)\theta - s \xi e^{\iunit\alpha\theta}}\,\dd\theta
      +
      e^{\iunit (q+1)\theta_*} \int^\infty_1 e^{\iunit r
      e^{\iunit\theta_*}} r^q e^{-s r^\alpha e^{\iunit \alpha\theta_*}
      \xi}\,\dd r,
  \end{align*}
  where the integral along $C$ is from 1 to $e^{\iunit \theta_*}$.  As
  $s\to0+$, the first integral on the \rhs converges uniformly in
  $\xi\in D$.  On the other hand,
  \begin{align*}
    |e^{\iunit r e^{\iunit\theta_*}}
    r^q e^{-s r^\alpha e^{\iunit\alpha
    \theta_*}\xi}|=e^{-r\sin\theta_*} r^q e^{-s r^\alpha
    |\xi|\cos(\alpha \theta_* +\arg \xi)}.
  \end{align*}
  By $\sin\theta_*>0$ and dominated convergence, the second integral
  on the \rhs converges uniformly in $\xi\in D$.  As a result, 
  \begin{align*}
    \int^\infty_1 H_s(r)\,\dd r
    \to 
    \iunit \int^{\theta_*}_0 e^{\iunit e^{\iunit\theta} +
    \iunit(q+1)\theta}\,\dd\theta  +
    e^{\iunit (q+1)\theta_*} \int^\infty_1 e^{\iunit r
    e^{\iunit\theta_*}} r^q\,\dd r
  \end{align*}
  uniformly in $\xi\in D$.  Likewise, by considering the contour
  consisting of $[1,R]$, $\{r e^{-\iunit\theta_*}{:}\ r\in [1, R]\}$,
  $C = \{e^{-\iunit\theta}{:}\ \theta\in [0, \theta_*]\}$, and $C_R =
  \{R e^{-\iunit \theta}{:}\ \theta\in  [0,\theta_*]\}$, similar
  argument leads to 
  \begin{align*}
    \int^\infty_1 e^{-\iunit r} r^q e^{-s r^\alpha \xi}\,\dd r
    \to
    -\iunit \int^{\theta_*}_0 e^{-\iunit e^{-\iunit\theta} -
    \iunit(q+1)\theta}\,\dd\theta+
    e^{-\iunit (q+1)\theta_*} \int^\infty_1 e^{-\iunit r
    e^{-\iunit\theta_*}} r^q \,\dd r
  \end{align*}
  uniformly in $\xi\in D$.

  In each of the above convergence results, the limit is independent
  of $\xi$.  Thus, as $s\to0+$, $f_s(\xi)$ converges to the same
  constant $\lambda$ uniformly in any compact set of
  $\xi\in\{z{:}~\Re z>0\}$.  It only remains to find out $\lambda$.
  For this purpose, fix $\xi=1$.  For $t>0$, let
  \begin{align*}
    f(s,t) = \intzi J_\tau(r) r^{p-1} e^{-s r^\alpha - t r}\,\dd r.
  \end{align*}
  Then exactly the same argument as above shows that as $(s,t)\to (0+,
  0+)$, $f(s,t)\to \lambda$.  On the other hand, given $t>0$, by
  dominated convergence, as $s\to0+$,
  \begin{align*}
    f(s,t)\to f(0,t) = 
    \intzi J_\tau(r) r^{p-1} e^{-t r}\,\dd r.
  \end{align*}
  Then $\lambda = \lim_{t\to0+} f(0,t)$.  From \cite{andrews:99:cup},
  Eq.~(9.10.5), the last limit is the \rhs of \eqref{e:Bessel-limit}. 
\end{proof}

\begin{proof}[Proof of Lemma \ref{l:Gtlimit}]
  By \eqref{e:proj-Gt} and dominated convergence, for $j,k\in\Ints_+$,
  \begin{align*}
    &
    \frac{\dd^k}{\dd t^k} (\proj_j G_t)(u)
      =
      \frac{(-\iunit)^j c_{j,d}}{A(\sph{d-1})} 
      \frac{\dd^k}{\dd t^k}    
      \sint \wt C^b_j(\ip u v) \Sbr{
      \intzi r^{d/2}J_{j+b}(r) e^{-t r^\alpha V(v)}\,\dd r
      }\omega(\dd v)
    \\
    &\hspace{1cm}=
      \frac{(-1)^k  (-\iunit)^j c_{j,d}}{A(\sph{d-1})}
      \sint \wt C^b_j(\ip u v) V(v)^k \Sbr{
      \intzi r^{d/2+k\alpha}J_{j+b}(r) e^{-t r^\alpha V(v)}\,\dd r
      }\omega(\dd v).
  \end{align*}
  By Lemma \ref{l:Bessel-limit}, as $t\to0+$,
  \begin{align*}
    \intzi r^{d/2+k\alpha}J_{j+b}(r) e^{-t r^\alpha V(v)}\,\dd r
    \to
    \frac{2^{d/2+k\alpha}
    \Gamma((j+k\alpha+d)/2)}{\Gamma((j-k\alpha)/2)}
  \end{align*}
  uniformly in $v\in\sph{d-1}$.  Then
  \begin{align*}
    &
      \frac{\dd^k}{\dd s^k}(\proj_{j,d}G_s)(u)
    \\
    &\to
      \frac{(-1)^k}{A(\sph{d-1})} (-\iunit)^j c_{j,d}
      \sint \wt C^b_j(\ip u v) V(v)^k
      \frac{2^{d/2+k\alpha}
      \Gamma((j+k\alpha+d)/2)}{\Gamma((j-k\alpha)/2)}
      \omega(\dd v)
    \\
    &=
      \frac{2^{d/2}(-2^\alpha)^k  \Gamma((j+k\alpha+d)/2)}
      {\Gamma((j-k\alpha)/2)} S_{j,k}(u)
  \end{align*}
  uniformly in $u\in\sph{d-1}$.
\end{proof}

\section{An example with linear spectral spherical
  density}  \label{s:example}
Fix $c\ge1$ and $\theta\in\sph{d-1}$, where $d\ge2$.  This section
applies the results in previous sections to the case where $\mu$ is an
$\alpha$-stable distribution on $\Reals^d$ with spectral spherical
density 
\begin{align*}
  P(u) = c + \ip \theta u
\end{align*}
and no shift.  This is the simplest case other than the one that
assumes a constant spectral spherical density.  Similar treatment also
applies if the spectral spherical density is $F(\ip\theta \cdot)$.
Denote the zonal harmonic $\wt C^{(d-2)/2}_j(\ip\theta\cdot)$ by  $Z_j(\cdot)$.

\begin{prop} \label{p:linear}
  Let
  \begin{align} \label{e:linear-kappa}
    \kappa_0 = \frac{c}{\Gamma(\frac{\alpha+d}2)
    \Gamma(\frac\alpha2+1)},
    \quad
    \kappa_1 = -\frac{\tan(\frac{\pi\alpha}{2})}
    {\Gamma(\frac{\alpha+d+1} 2) \Gamma(\frac{\alpha+1}2)},
    \quad
    \kappa_2 = \frac{\pi^{d/2}\Gamma(\alpha+1)}{2^{\alpha -
    1}}.
  \end{align}
  \begin{remarklist}
  \item\label{p:linear-che}
    If $\alpha\ne1$, then the characteristic exponent of $\mu$ is
    \begin{align} \label{e:che-linear}
      \Phi_\mu(z) = |z|^\alpha V(u_z) \quad\text{with}\quad
      V(u)
      =
      \kappa_3[\kappa_0 + \iunit \kappa_1 \ip\theta{u}],
    \end{align}
    and if $\alpha=1$, then
    \begin{align}\label{e:che-linear1}
      \Phi_\mu(z)
      =
        2|z| \pi^{(d-1)/2} \Sbr{
        \frac{c}{\Gamma(\frac{1+d}2)}
        -\frac{\iunit\beta_d \ip\theta{u_z}}
        {\Gamma(\frac d2+1)\sqrt\pi} }
        + \iunit|z|\ln|z| \frac{2\pi^{d/2-1}}{\Gamma(\frac d2+1)}
        \ip\theta{u_z},
    \end{align}
    where $\beta_d$ is given in Theorem \ref{t:che}.
  \item\label{p:linear-pdf}
    If $\alpha\in (0,1)$, then
    the \pdf of $\mu$ at $x\ne0$ is
    \begin{align} \label{e:pdf-linear<1}
      g(x)
      =
      \frac{\Gamma(\frac d2)}{\pi^{d/2}}
      \sumoi k \frac{[-2\Gamma(\alpha+1) \pi^{d/2}]^k}
      {k! |x|^{k\alpha+d}}
      \sum^k_{j=0} c_{j,d} \gamma_{j,k} Z_j(u_x),
    \end{align}
    where
    \begin{align*}
      \gamma_{j,k}
      =
      \frac{\Gamma(\frac{j+k\alpha+d}2)}{
      \Gamma(\frac{j-k\alpha}2)}
      \sum^{\Flr{(k-j)/2}}_{m=0}
      \frac{k!}{(k-j-2m)! \Gamma(j+m+\frac d2) m!} 
      \frac{ (-1)^m \kappa^{k-j-2m}_0 \kappa^{j+2m}_1} {2^{j+2m}},
    \end{align*}
    and if $\alpha\in (1,2)$, then the \pdf at any $x$ is
    \begin{align}\nonumber
      g(x)
      &=
        \frac{2^{1-d} \Gamma(\frac d2)}{\pi^{d/2}\alpha}
        \sumzi n
        \kappa^{-(n+d)/\alpha}_2
        \Grp{\frac{|x|}{2}}^n
        \sum^{\Flr{n/2}}_{m=0} \frac{(-1)^{m+n}
        c_{n-2m,d}}{m!\Gamma(n-m+\frac d2)}\times
      \\\label{e:pdf-linear>1}
      &\quad
        \sum^\infty_{k=n-2m}
        \frac{\Gamma(k+(n+d)/\alpha)}{
        (\kappa_0 + \kappa^2_1/\kappa_0)^{k+(n+d)/\alpha}}
        \gamma^*_{n-2m,k} Z_{n-2m}(u_x),
    \end{align}
    where for $k\ge j$,
    \begin{align*}
      \gamma^*_{j,k}
      =
      \sum^{\Flr{(k-j)/2}}_{l=0}
      \frac{1}{(k-j-2l)! \Gamma(j+l+\frac d2) l!}
      \frac{(-1)^l \kappa^k_1
      (\kappa_1/\kappa_0)^{k-j-2l}} 
      {2^{j+2l}}.
    \end{align*}
  \end{remarklist}  
\end{prop}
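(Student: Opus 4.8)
The plan is to specialise the general results of Sections~\ref{s:che}--\ref{s:pdf(1,2)} to the one-parameter density $P(u)=c+\ip\theta u$, whose spherical harmonics are immediate: since $\ip\theta\cdot\in\Cal H_{1,d}$ while constants lie in $\Cal H_{0,d}$, we have $P_0=\proj_0P=c$, $P_1=\proj_1P=\ip\theta\cdot$, and $P_j=0$ for $j\ge2$. Inserting these three projections into Theorem~\ref{t:che} at once gives Part~\ref{p:linear-che}: for $\alpha\ne1$ only $w_0(\alpha)$ and $w_1(\alpha)$ survive in \eqref{e:che-ne1}, and the first expression in \eqref{e:Funk-Hecke2} yields $w_0(\alpha)\,c=\kappa_2\kappa_0$ and $-\iunit\tan(\pi\alpha/2)\,w_1(\alpha)=\iunit\kappa_2\kappa_1$, so that $V(u)=\kappa_2\Sbr{\kappa_0+\iunit\kappa_1\ip\theta u}$ (the constant written as $\kappa_3$ in \eqref{e:che-linear}). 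For $\alpha=1$ the same substitution into \eqref{e:che-eq1}, using $w_0(1)=2\pi^{(d-1)/2}/\Gamma((d+1)/2)$, the value $w^*_1=\pi^{d/2}\beta_d/\Gamma(d/2+1)$ from \eqref{e:che-eq1d}, and \eqref{e:che-eq1a}, produces \eqref{e:che-linear1} after routine Gamma-function algebra.

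The computational core of Part~\ref{p:linear-pdf} is the closed form, for integers $l,j\ge0$,
\begin{align*}
  \proj_j(\ip\theta\cdot^l)=
  \begin{cases}
    \dfrac{\Gamma(d/2)\,c_{j,d}\,l!}{2^{l}\,((l-j)/2)!\,\Gamma((l+j+d)/2)}\,Z_j,
      & l-j\in2\Ints_+,\\[2.8ex]
    0,& \text{otherwise,}
  \end{cases}
\end{align*}
where $Z_j=\wt C^{(d-2)/2}_j(\ip\theta\cdot)$. Vanishing when $l-j$ is odd follows from the parity relation $(\proj_j h)(-v)=(-1)^j(\proj_j h)(v)$ combined with $(\ip\theta{-v})^l=(-1)^l(\ip\theta v)^l$. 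Otherwise $\proj_j(\ip\theta\cdot^l)\in\Cal H_{j,d}$ is a function of $\ip\theta\cdot$ alone, hence a scalar multiple of $Z_j$; evaluating at $u=\theta$ (where $Z_j(\theta)=1$) and using the integral representation \eqref{e:proj} identifies the scalar as $\frac{c_{j,d}}{A(\sph{d-1})}\sint\wt C^{(d-2)/2}_j(\ip\theta v)(\ip\theta v)^l\,\omega(\dd v)$, which by the Funk--Hecke identity \eqref{e:Funk-Hecke} applied with exponent $\alpha=l$ (legitimate because $l-j$ is even, exactly as in the proof of Proposition~\ref{p:asym-lt1}) equals $w_j(l)$; the first expression in \eqref{e:Funk-Hecke2} together with $A(\sph{d-1})=2\pi^{d/2}/\Gamma(d/2)$ then gives the stated constant.

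For $\alpha\in(0,1)$, Theorem~\ref{t:pdf-lt1} applies with $\dg=1$, so $g(x)$ equals the series \eqref{e:mvs-pdf<1} with inner sum running to $j=k$ and $S_{j,k}=(-\iunit)^j\proj_j(V^k)$. I would expand $V^k=\kappa_2^k\sum_{l}\binom kl\kappa_0^{k-l}(\iunit\kappa_1)^l(\ip\theta\cdot)^l$ by the binomial theorem, apply $\proj_j$ termwise via the displayed formula (only $l=j+2m$, $0\le m\le\Flr{(k-j)/2}$, contribute), and observe that $(-\iunit)^j(\iunit\kappa_1)^{j+2m}=(-1)^m\kappa_1^{j+2m}$, so every factor $\iunit$ cancels. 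The surviving sum over $m$, together with $\binom{k}{j+2m}$ and the lemma's constant, collapses to $\Gamma(d/2)\,c_{j,d}\,\gamma_{j,k}$ once one uses $(-2^{\alpha})^k\kappa_2^k=[-2\Gamma(\alpha+1)\pi^{d/2}]^k$, giving \eqref{e:pdf-linear<1}; uniform absolute convergence on $\{|x|\ge r\}$ is inherited from Theorem~\ref{t:pdf-lt1}.

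The case $\alpha\in(1,2)$ carries the real work, and the one non-mechanical step — the main obstacle — is choosing the expansion centre in Theorem~\ref{t:pdf-gt1} well rather than arbitrarily. I would take $R=\kappa_2(\kappa_0+\kappa_1^2/\kappa_0)$, equivalently factor out $\kappa_2^{-p}$ and expand $(\kappa_0+\iunit\kappa_1 t)^{-p}$ about $R_0:=\kappa_0+\kappa_1^2/\kappa_0$ with $t=\ip\theta\cdot\in[-1,1]$. Since $V=\kappa_2(\kappa_0+\iunit\kappa_1 t)$ one computes $\varrho(V)=\kappa_2(\kappa_0^2+\kappa_1^2 t^2)/(2\kappa_0)\le\kappa_2R_0/2<R$, so $R>\varrho_0$ and Lemma~\ref{e:V-R} applies; the resulting $V_*=R-V=\kappa_2\kappa_1(\kappa_1/\kappa_0-\iunit t)$ has $\sup_{\sph{d-1}}|V_*|<R$, the strict inequality reducing to $|\kappa_1|\sqrt{\kappa_1^2+\kappa_0^2}<\kappa_0^2+\kappa_1^2$, hence to $\kappa_0\ne0$, which is automatic as $c\ge1$. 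Then \eqref{e:pdf>1} holds; expanding $V_*^k=\kappa_2^k\kappa_1^k\sum_r\binom kr(\kappa_1/\kappa_0)^{k-r}(-\iunit t)^r$, applying $\proj_{n-2m}$ via the lemma (with $r=n-2m+2l$), and tracking the powers of $\iunit$ and $\kappa_2$ gives $S^*_{n-2m,k}=(-1)^n\,k!\,\Gamma(d/2)\,c_{n-2m,d}\,\kappa_2^k\,\gamma^*_{n-2m,k}\,Z_{n-2m}$. In \eqref{e:pdf>1} the factor $k!$ then cancels the $k!$ in the $k$-summand, the factor $\kappa_2^k$ reduces $R^{k+(n+d)/\alpha}=\kappa_2^{k+(n+d)/\alpha}R_0^{k+(n+d)/\alpha}$ to leave precisely the prefactor $\kappa_2^{-(n+d)/\alpha}$ and denominator $(\kappa_0+\kappa_1^2/\kappa_0)^{k+(n+d)/\alpha}$ of \eqref{e:pdf-linear>1}, and the $(-1)^n$ merges with the ambient $(-1)^m$ into $(-1)^{m+n}$; the rest is bookkeeping. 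Uniform absolute convergence on $\{|x|\le M\}$ is inherited from Theorem~\ref{t:pdf-gt1}, and the auxiliary \pdf of $|X|$ in each regime, if stated, follows by feeding $P_0=c$, $P_1=\ip\theta\cdot$ into \eqref{e:pdf-abs-lt1} and \eqref{e:pdf-abs>1}.
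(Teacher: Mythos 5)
Your proposal is correct and follows essentially the same route as the paper's proof: the paper likewise reads off $P_0=c$, $P_1=\ip\theta\cdot$, $P_j=0$ for $j\ge2$ and substitutes into Theorem \ref{t:che}, proves your closed form for $\proj_j(\ip\theta\cdot^{\,l})$ as its Lemma \ref{l:proj-1form} via the Funk--Hecke identity (your zonal-invariance step is just an equivalent phrasing of the same computation), and then expands $V^k$, respectively $V_*^k$, binomially and projects termwise, choosing for $\alpha\in(1,2)$ exactly your centre $R=2\varrho_0=\kappa_2(\kappa_0+\kappa_1^2/\kappa_0)$ so that $V_*=\kappa_2[\kappa_1^2/\kappa_0-\iunit\kappa_1\ip\theta\cdot]$. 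The only cosmetic point is the one you already flagged: $\kappa_3$ in \eqref{e:che-linear} is the paper's $\kappa_2$.
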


To prove Proposition \ref{p:linear}, the next technical result is
needed.
\begin{lemma} \label{l:proj-1form}
  Let $f_k(u) = \ip\theta u^k$.  Then for
  $j$, $k\in\Ints_+$ and $u\in \sph{d-1}$,
  \begin{align*}
    \proj_j f_k = 
    \begin{cases}
      \displaystyle
      \frac{c_{j,d} w_j(k)}{A(\sph{d-1})} Z_j,
      & \text{if $k-j\ge0$ is even} \\
      0 & \text{else,}
    \end{cases}
  \end{align*}
  where $w_j(k)$ are given in \eqref{e:Funk-Hecke2}.
\end{lemma}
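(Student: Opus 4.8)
The plan is to strip off the projection with the integral representation \eqref{e:proj} and then recognize the remaining spherical integral via the Funk--Hecke formula \eqref{e:Funk-Hecke0}, which reduces the whole computation to a one-dimensional Gegenbauer integral that has essentially already appeared in the proof of Proposition~\ref{p:asym-lt1}. Since $d\ge2$, \eqref{e:proj} gives
\[
  (\proj_j f_k)(u) = \frac{c_{j,d}}{A(\sph{d-1})}\sint \wt C^{(d-2)/2}_j(\ip u v)\,\ip\theta v^k\,\omega(\dd v),\qquad u\in\sph{d-1}.
\]
For fixed $u$, the zonal harmonic $h=\wt C^{(d-2)/2}_j(\ip u\cdot)$ belongs to $\Cal H_{j,d}$, so the integrand $\ip\theta v^k\,h(v)$ is of Funk--Hecke type with test function $t\mapsto t^k$ and pole $\theta$. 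Applying \eqref{e:Funk-Hecke0} turns the integral into $\lambda_{j,d}(t^k)\,h(\theta)$, and $h(\theta)=\wt C^{(d-2)/2}_j(\ip u\theta)=Z_j(u)$ since the inner product is symmetric. Hence $\proj_j f_k=\dfrac{c_{j,d}\,\lambda_{j,d}(t^k)}{A(\sph{d-1})}\,Z_j$, and everything comes down to evaluating $\lambda_{j,d}(t^k)=A(\sph{d-2})\int_{-1}^1 t^k\,\wt C^{(d-2)/2}_j(t)\,(1-t^2)^{(d-3)/2}\,\dd t$.

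Next I would compute this scalar integral by a parity argument. The polynomial $\wt C^{(d-2)/2}_j$ has parity $(-1)^j$ and the weight $(1-t^2)^{(d-3)/2}$ is even, so if $k-j$ is odd the integrand is odd and $\lambda_{j,d}(t^k)=0$, which is the ``else'' branch when $k-j$ is odd. If $k-j$ is even the integrand is even, and on $(0,1)$ it coincides with $|t|^k(\sgn t)^{\rx_j}\wt C^{(d-2)/2}_j(t)(1-t^2)^{(d-3)/2}$, which is even too because $\rx_j$ and $j$ have the same parity; integrating over $[-1,1]$ and comparing with the integral \eqref{e:F-H-G} defining $w_j(\alpha)$, evaluated at $\alpha=k$, gives $\lambda_{j,d}(t^k)=w_j(k)$. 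The first expression in \eqref{e:Funk-Hecke2} moreover shows $w_j(k)=0$ when $j>k$, since then $\Gamma((k-j)/2+1)$ has a pole; this absorbs the remaining ``else'' case $k<j$, and the two cases of the lemma follow. (Alternatively, one can quote verbatim the identity $\sint \ip u v^k P_j(v)\,\omega(\dd v)=w_j(k)P_j(u)$ for $P_j\in\Cal H_{j,d}$ that is established inside the proof of Proposition~\ref{p:asym-lt1}, taking $P_j=\wt C^{(d-2)/2}_j(\ip u\cdot)$.)

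I do not anticipate a real obstacle; the delicate points are only bookkeeping: (i) applying \eqref{e:Funk-Hecke0} with the ``pole'' $\theta$ in place of $u$ and keeping in mind that it returns the value $h(\theta)$; (ii) matching the sign conventions so that $\int_{-1}^1 t^k(\cdots)\,\dd t$ is identified with the integral for $w_j(k)$ in \eqref{e:F-H-G} and not with some unrelated constant; and (iii) the case $d=2$, where $\wt C^0_j=T_j$ and the same parity reasoning goes through, consistently with \eqref{e:Funk-Hecke} and \eqref{e:Funk-Hecke2} having been noted to hold for all $d\ge2$. In short, the lemma is a short corollary of \eqref{e:proj}, \eqref{e:Funk-Hecke0}, and the evaluation of $w_j(\alpha)$ obtained in Section~\ref{s:che}.
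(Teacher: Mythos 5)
Your proof is correct and follows essentially the same route as the paper: apply \eqref{e:proj}, invoke the Funk--Hecke formula \eqref{e:Funk-Hecke0} with pole $\theta$ and the zonal harmonic $\wt C^{(d-2)/2}_j(\ip u\cdot)\in\Cal H_{j,d}$, kill the odd-parity cases by symmetry, and identify the remaining one-dimensional integral with $w_j(k)$ from \eqref{e:F-H-G}. The only cosmetic difference is that the paper disposes of the case $j>k$ at the outset via $f_k\in\Cal P_{k,d}$, whereas you absorb it through the vanishing of $w_j(k)$ coming from the factor $1/\Gamma((k-j)/2+1)$ in \eqref{e:Funk-Hecke2}; both are valid.
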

\begin{proof}
  If $j>k$, then $\proj_j f_k=0$ as $f_k\in\Cal P_{k, d}$.  If
  $j\le k$, then by \eqref{e:proj} and \eqref{e:Funk-Hecke0}, for
  $u\in\sph{d-1}$,
  \begin{align*}
    (\proj_j f_k)(u)
    &=
      \frac{c_{j,d}}{A(\sph{d-1})}
      \sint
      \wt C^{(d-2)/2}_j(\ip u v) \ip \theta v^k\, \omega(\dd v)
    \\
    &=
      \frac{c_{j,d}}{A(\sph{d-1})}
      \lambda_{j,k} \wt C^{(d-2)/2}_j(\ip \theta u),
  \end{align*}
  where
  \begin{align*}
    \lambda_{j,k}
    =
    A(\sph{d-1}) \int^1_{-1} t^k \wt C^{(d-2)/2}_j(t)
    (1-t^2)^{(d-3)/2}\,\dd t.
  \end{align*}
  Since $\wt C^{(d-2)/2}_j(-t) = (-1)^j\wt C^{(d-2)/2}_j(t)$, if $j$
  and $k$ have different parities, then $\lambda_{j,k}=0$.  On the
  other hand, if $j$ and $k$ have the same parity, then by comparing
  the above display and \eqref{e:F-H-G}, $\lambda_{j,k}$ is exactly
  $w_j(k)$.
\end{proof}
\begin{proof}[Proof of Proposition \ref{p:linear}]
  Denote $f(u) = \ip\theta u$.

  \medbreak\noindent
  \ref{p:linear-che} From $P = c + f$ and $f\in \Cal H_{1,d}$, $P_0 =
  \proj_0 P = c$, $P_1 = \proj_1 P = f$, and $P_j = \proj_j P = 0$ for
  $j>1$.   Then \eqref{e:che-linear} and \eqref{e:che-linear1}
  directly follow from Theorem \ref{t:che}.

  \medbreak\noindent
  \ref{p:linear-pdf}
  Let $\alpha\in (0,1)\cup(1,2)$.  If $k<j$, then by $(\kappa_0
  + \iunit \kappa_1 f)^k\subset \Cal P_{k,d}$, $\proj_j[(\kappa_0 +
  \iunit \kappa_1 f)^k]=0$.  On the other hand, if $k\ge j$, then by
  Lemma \ref{l:proj-1form},
  \begin{align*}
    \proj_j[(\kappa_0 + \iunit \kappa_1 f)^k]
    &=
      \sum^k_{l=0} \binom k l \kappa^{k-l}_0 (\iunit \kappa_1)^l
      \proj_j f_l
    \\
    &=
      \sum_{j\le l\le k,\ l-j{~\rm even}}
      \binom k l \kappa^{k-l}_0 (\iunit \kappa_1)^l
      \frac{c_{j,d} w_j(l)}{A(\sph{d-1})} Z_j
    \\
    &=
      \frac{\iunit^j c_{j,d} Z_j}
      {A(\sph{d-1})}
      \sum^{\Flr{(k-j)/2}}_{m=0}
      \binom k {j+2m} \kappa^{k-j-2m}_0 (-1)^m \kappa^{j+2m}_1
      w_j(j+2m).
  \end{align*}
  Apply the first expression in \eqref{e:Funk-Hecke2} to $w_j(j+2m)$
  on the \rhs.  It follows that
  \begin{align}\nonumber
    &
      \proj_j[(\kappa_0 + \iunit \kappa_1 f)^k]
    \\\label{e:proj-poly}
    &=
      \iunit^j c_{j,d}\Gamma(d/2) Z_j
      \sum^{\Flr{(k-j)/2}}_{m=0}
      \frac{k!}{(k-j-2m)! \Gamma(j+m+\frac d2) m!} 
      \frac{ (-1)^m \kappa^{k-j-2m}_0 \kappa^{j+2m}_1}{2^{j+2m}}.
  \end{align}
  
  Let $\alpha\in (0,1)$.  Then by Theorem \ref{t:pdf-lt1} and
  \eqref{e:che-linear}, for $x\ne0$,
  \begin{align*}
    g(x)
    =
    \comment{
    \sumoi k \frac{(-2\alpha)^k \pi^{-d/2}}{k! |x|^{k\alpha+d}}
    \sum^k_{j=0}
    \frac{\Gamma(\frac{j+k\alpha+d}2)}{\Gamma(\frac{j-k\alpha}2)}
    \Grp{\frac{\pi^{d/2} \Gamma(\alpha+1)}{2^{\alpha-1}}}^k
    (-\iunit)^j \proj_j[(\kappa_0 + \iunit\kappa_1 \ip\theta
    \cdot)^k](u_x)
    }
    \sumoi k \frac{[-2\Gamma(\alpha+1) \pi^{d/2}]^k}{\pi^{d/2} k!
    |x|^{k\alpha+d}}
    \sum^k_{j=0}
    \frac{\Gamma(\frac{j+k\alpha+d}2)}{\Gamma(\frac{j-k\alpha}2)}
    (-\iunit)^j \proj_j[(\kappa_0 + \iunit\kappa_1 f)^k](u_x).
  \end{align*}
  Then by \eqref{e:proj-poly}, \eqref{e:pdf-linear<1} follows.
  
  Let $\alpha\in (1,2)$.  From Lemma \ref{e:V-R} and
  \eqref{e:che-linear}, $\sup_{\sph{d-1}} |R - V|<R$ for any
  \begin{align*}
    R>\varrho_0 = \sup_{z\in\sph{d-1}} \frac{|V|^2}{2\Re(V)}
    =
    \kappa_2
    \frac{\kappa^2_0 + \kappa^2_1}{2\kappa_0}.
  \end{align*}
  Choose $R = 2\varrho_0$, so that $V_* = R - V = 
    \kappa_2[\kappa^2_1/\kappa_0 - \iunit\kappa_1 f]$.
  Then by Theorem \ref{t:pdf-gt1},
  \begin{align*}
    g(x)
    &=\frac{2^{1-d}}{\pi^{d/2}\alpha}
      \sumzi n \kappa^{-(n+d)/\alpha}_2\Grp{\frac{|x|}{2}}^n
      \sum^{\Flr{n/2}}_{m=0} \frac{(-1)^m}
      {m!\Gamma(n-m+\frac d2)}\times
    \\
    &\quad
      \sum^\infty_{k=n-2m} \frac{\Gamma(k+ \frac{n+d}\alpha)}
      {k! (\kappa_0 + \kappa^2_1/\kappa_0)^{k+(n+d)/\alpha}}
      (-\iunit)^{n-2m} \proj_{n-2m}[(\kappa^2_1/\kappa_0 -
      \iunit\kappa_1 f)^k](u_x).
  \end{align*}
  Then from \eqref{e:proj-poly}, but with $\kappa_0$ and $\kappa_1$
  therein replaced with $\kappa^2_1/\kappa_0$ and $-\kappa_1$,
  respectively, \eqref{e:pdf-linear>1} follows.
\end{proof}

\section{Sampling issues} \label{s:sampling}
The results in Section \ref{s:pdf(0,1)} and \ref{s:pdf(1,2)}
allow sampling with the series method, which is a version of the
rejection method (\cite{devroye:86:sv-ny}, sections II.2 and IV.5).
In general, suppose we wish to sample from a \pdf $g$ that is 
specified as $\const\times g^*$, where $g^*\ge0$ is known.  The
multiplicative factor need not be specified and is often intractable.
The rejection method exploits a \pdf $f$ that is relatively easy to
sample.  Similar to $g$, $f$ is specified as $\const\times f^*$, where
the so-called dominating function $f^*$ satisfies $g^*\le f^*$.
Denote by $U$ a random variable uniformly distributed on $(0,1)$.
Then $f$ can be sampled as follows.

\begin{proof}[Rejection method]{\ }
  \begin{itemize}[leftmargin=3ex, parsep=0ex, itemsep=0ex, topsep=.4ex]
  \item Keep sampling $X\sim f$ and $U$ independently until $U f^*(X)
    \le g^*(X)$.  Then output $X$ and stop.  \qedhere
  \end{itemize}
\end{proof}

As a version of the rejection method, the series method deals with
$g^*(x)$ and $f^*(x)$ that are specified as convergent series
$\sumoi k a_k(x)$ and $\sumoi k b_k(x)$, respectively, where $a_k(x)$
and $b_k(x)$ are real valued.  Write $g^*_k(x) = \sum^k_{n=1} a_n(x)$
and $f^*_k(x) = \sum^k_{n=1} b_n(x)$.  Suppose functions $A_{k+1}(x)$
and $B_{k+1}(x)$ are available, such that 
\begin{align*}
  |g^*(x) - g^*_k(x)| \le A_{k+1}(x), \quad
  |f^*(x) - f^*_k(x)| \le B_{k+1}(x)
\end{align*}
and $A_{k+1}(x)\to 0$ and $B_{k+1}(x)\to0$ as $k\toi$.  Then $f$ can
be sampled as follows.

\begin{proof}[Series method]{\ }
  \begin{enumerate}[itemsep=0ex, parsep=0ex, leftmargin=3ex,
    topsep=.4ex]
  \item Sample $X\sim f$ and $U$ independently.
  \item Find the first $k$ such that $|g^*_k(X) - Uf^*_k(X)| >
    A_{k+1}(X) + U B_{k+1}(X)$.
  \item If $g^*_k(X)>U f^*_k(X)$, then output $X$ and stop, otherwise
    go back to step 1 and repeat.  \qedhere
  \end{enumerate}
\end{proof}
Note that if $a_1(x) = g^*(x)$, $b_1(x) = f^*(x)$, and $a_k(x) =
b_k(x) = A_{k+1}(x) = B_{k+1}(x) \equiv0$ for $k>1$, then the series
method reduces to the rejection method.  To verify that its output
follows $f$, with probability one, $|g^*(X) - U f^*(X)|>0$, so for
large $k$, $|g^*_k(X) - U f^*_k(X)| > A_{k+1}(X) + U B_{k+1}(X)$.  If
$g^*_k(X) > U f^*_k(X)$, then we have
$g^*_k(X) - U f^*_k(X) > A_{k+1}(X) + U B_{k+1}(X)$, and so
\begin{align*}
  g^*(X) \ge g^*_k(X) - A_{k+1}(X)
  > U f^*_k(X) + U B_{k+1}(X) \ge U f^*(X).
\end{align*}
Likewise, if $g^*_k(X) < U f^*_k(X)$, then $g^*(X) < U f^*(X)$.
Thus, the above algorithm implements the rejection method.

\subsection{Case 1: $\alpha\in (0,1)$}
The series method will rely on the following.
\begin{prop} \label{p:bound<1}
  Let the conditions in Theorem \ref{t:pdf-lt1} be satisfied.  That
  is, $\mu$ is an $\alpha$-stable distribution on $\Reals^d$, $d\ge2$,
  with $\alpha\in(0,1)$ and characteristic exponent \eqref{e:che-ss3},
  and $\lambda = P\omega$ with $0\ne P\in \Cal P_{\dg,d}$ for some
  $\dg\in\Ints_+$.  Then
  \begin{align} \label{e:bound<1}
    g(x) \le
    C_1 \cf{|x|\le1} + \frac{C_2}{|x|^{\alpha+d}} \cf{|x|>1},
  \end{align}
  where $C_1 = \frac{\Gamma(d/\alpha)}{\alpha (2\pi)^d}
  \Sbr{\inf_{\sph{d-1}}\Re(V)}^{-d/\alpha}$ and 
  \begin{align*}
    C_2 =
    \nth{\pi^{d/2}} \sumoi k \frac{(2^\alpha)^k}{k!}
    \sup_{\sph{d-1}}|V|^k \sum^{k\dg}_{j=0}
    \frac{\sqrt{c_{j,d}}\Gamma((j+k\alpha+d)/2)}
    {|\Gamma((j-k\alpha)/2)|}.
  \end{align*}
\end{prop}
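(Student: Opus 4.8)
The plan is to estimate $g$ separately on $\{|x|\le1\}$ and on $\{|x|>1\}$: the first by a crude $L^\infty$ Fourier bound, the second by majorizing term by term the series of Theorem~\ref{t:pdf-lt1}. Throughout write $c_0=\inf_{\sph{d-1}}\Re V$, and note first that $c_0>0$: by Lemma~\ref{l:ReV} it suffices that $\mu$ be nondegenerate, and indeed $\Re V(u)=\sint|\ip u v|^\alpha P(v)\,\omega(\dd v)>0$ for every $u$, since a nonzero polynomial cannot vanish on a nonempty open subset of $\sph{d-1}$ such as $\{v{:}~\ip u v\ne0\}$.

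For $|x|\le1$ I would use only the inversion formula \eqref{e:ift-ss}: since $g\ge0$ and $|\FT\mu(z)|=e^{-\Re\Phi_\mu(z)}=e^{-|z|^\alpha\Re V(u_z)}\le e^{-c_0|z|^\alpha}$, one has $g(x)\le(2\pi)^{-d}\rint|\FT\mu(z)|\,\dd z\le(2\pi)^{-d}\rint e^{-c_0|z|^\alpha}\,\dd z$. Passing to polar coordinates and substituting $t=c_0s^\alpha$ turns the radial integral into $\intzi e^{-c_0s^\alpha}s^{d-1}\,\dd s=\Gamma(d/\alpha)/(\alpha c_0^{d/\alpha})$; collecting the constants yields the bound claimed in \eqref{e:bound<1} on $\{|x|\le1\}$, and since it does not depend on $x$ it is uniform there.

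For $|x|>1$ I would apply the triangle inequality to each term of the series \eqref{e:mvs-pdf<1}, giving $g(x)\le\sum_{k\ge1}\frac{(2^\alpha)^k\pi^{-d/2}}{k!\,|x|^{k\alpha+d}}\sum_{j=0}^{k\dg}\frac{\Gamma((j+k\alpha+d)/2)}{|\Gamma((j-k\alpha)/2)|}\,|S_{j,k}(u_x)|$, then insert $|S_{j,k}(u_x)|\le\sqrt{c_{j,d}}\,\sup_{\sph{d-1}}|V|^k$ from \eqref{e:Sjk-bound} and use $|x|^{k\alpha+d}\ge|x|^{\alpha+d}$, valid for every $k\ge1$ because $|x|>1$, to factor $|x|^{-\alpha-d}$ out of the double sum. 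What remains is exactly $C_2\,|x|^{-\alpha-d}$ with $C_2$ as defined in the statement, so \eqref{e:bound<1} follows as soon as $C_2<\infty$.

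Hence the only real content is the finiteness of $C_2$, and this is already implicit in the proof of Theorem~\ref{t:pdf-lt1}. By Gauss' formula one has $\Gamma((j+k\alpha+d)/2)/\Gamma((j-k\alpha)/2)=2^{1-j-k\alpha-d}L_{jk}(1)$ (cf.\ \eqref{e:Ljk(1)}); the estimate \eqref{e:Ljk-G} gives $|L_{jk}(1)|\le\const^k\Gamma(k\alpha)$ uniformly for $0\le j\le k\dg$ once $k$ is large (the remaining finitely many $k$ contribute a finite amount); and $\sqrt{c_{j,d}}=O(j^{(d-2)/2})$ by \eqref{e:c-asym}, so $\sum_{j\ge0}\sqrt{c_{j,d}}\,2^{-j}<\infty$. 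Combining these, $C_2$ is bounded by a constant times $\sum_{k\ge1}\const^k(\sup_{\sph{d-1}}|V|)^k\Gamma(k\alpha)/k!$, which converges because $\alpha<1$ makes $\Gamma(k\alpha)/k!$ decay faster than any geometric rate (Stirling). The one place to be careful — the \emph{hard part}, such as it is — is to keep the $j$-dependence ($\sqrt{c_{j,d}}$ against $2^{-j}$) and the $k$-dependence ($\Gamma(k\alpha)$ against $k!$) separated when bounding the double sum; no estimate beyond those already assembled in Section~\ref{s:pdf(0,1)} is required.
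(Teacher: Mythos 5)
Your proposal is correct and follows essentially the same route as the paper: the paper likewise bounds $g$ on $\{|x|\le1\}$ by $(2\pi)^{-d}\int|\FT\mu|$ and on $\{|x|>1\}$ by applying the triangle inequality to the series \eqref{e:mvs-pdf<1} together with \eqref{e:Sjk-bound} and $|x|^{k\alpha+d}\ge|x|^{\alpha+d}$, your extra paragraph verifying $C_2<\infty$ merely making explicit what the paper leaves implicit in the uniform absolute convergence established for Theorem \ref{t:pdf-lt1}. Note only that, exactly as in the paper's own proof, passing to polar coordinates in the $C_1$ estimate actually produces an additional surface-area factor $A(\sph{d-1})$, so what this argument literally yields is $A(\sph{d-1})\,C_1$ rather than the stated $C_1$ --- a constant-bookkeeping slip shared with the paper, not a defect of your approach.
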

\begin{proof}
  From \eqref{e:ift-ss}, \eqref{e:che-ss3}, and Lemma \ref{l:ReV}, for
  any $x\in\Reals^d$,
  \begin{align*}
    g(x)
    \le
    (2\pi)^{-d} \rint e^{-|z|^\alpha \Re(V(z))}\,\dd z
    \le
    (2\pi)^{-d} \intzi r^{d-1} e^{-r^\alpha \inf_{\sph{d-1}}
    \Re(V)}\,\dd r = C_1.
  \end{align*}
  On the other hand, from Theorem \ref{t:pdf-lt1}, for $x\ne0$,
  \begin{align*}
    g(x)
    \le
    \sumoi k \frac{(2^\alpha)^k \pi^{-d/2}}{k! |x|^{k\alpha+d}}
    \sum^{k\dg}_{j=0}
    \frac{\Gamma((j+k\alpha+d)/2)}{|\Gamma((j-k\alpha)/2)|}
    \sup_{\sph{d-1}}|S_{j,k}|.
  \end{align*}
  When $|x|>1$, by \eqref{e:Sjk-bound}, the \rhs is bounded by
  $C_2/|x|^{\alpha + d}$.  Then \eqref{e:bound<1} follows.
\end{proof}

From Proposition \ref{p:bound<1}, $g$ can be sampled by the series
method with the following inputs.
\begin{enumerate}[itemsep=0ex, parsep=0ex, leftmargin=3ex, topsep=1ex]
\item $g^*(x) = g(x) = \sumoi k a_k(x)$, where
  \begin{align*}
    a_k(x) =
    \frac{(-2^\alpha)^k \pi^{-d/2}}{k! |x|^{k\alpha+d}}
    \sum^{k\dg}_{j=0}
    \frac{\Gamma((j+k\alpha+d)/2)}{\Gamma((j-k\alpha)/2)}
    S_{j,k}(u_x),
  \end{align*}
  and the bound $A_{k+1}(x)$ is any convenient upper bound of
  \begin{align*}
    \nth{\pi^{d/2}} \sum^\infty_{n=k+1} \frac{(2^\alpha)^n}{n!
    |x|^{n\alpha+d}} \sup_{\sph{d-1}}|V|^k \sum^{n\dg}_{j=0}
    \frac{\sqrt{c_{j,d}}\Gamma((j+n\alpha+d)/2)}
    {|\Gamma((j-k\alpha)/2)|}
  \end{align*}
  as long as $A_{k+1}(x)\to0$ as $k\toi$.
\item $f^*(x) = D_1 \cf{|x|\le1} + D_2|x|^{-\alpha-d} \cf{|x|>1}$,
  where $D_1\ge C_1$ and $D_2\ge C_2$ are bounds that are easy to
  evaluate, and the series $\sumoi k b_k(x)$ for $f^*$ and
  corresponding bounds on remainders are given by $b_1(x) =
  f^*(x)$ and $b_k(x)=B_{k+1}(x)=0$ for $k>1$.
\end{enumerate}

With these inputs, in each iteration, the series method needs to
sample from the \pdf $f = f^*/\int f^*$, which is not hard because $f$
is a mixture of the uniform \pdf in the unit ball and the \pdf of
$U^{-1/\alpha} \vartheta$, where $U$ and $\vartheta$ are independent
random variables, with $U$ uniformly distributed on $(0,1)$ and
$\vartheta$ uniformly distributed on $\sph{d-1}$.  Furthermore, for
$X\sim f$, with probability 1, $X\ne0$, so all $a_k(X)$ and
$A_{k+1}(X)$ are well defined.

\subsection{Case 2: $\alpha\in(1,2)$}
The sampling when $\alpha\in(1,2)$ is more complicated because
the series in Proposition \ref{p:pdf-gt1} and Theorem \ref{t:pdf-gt1}
do not indicate a simple integrable function to dominate the \pdf.
Instead, for $X\sim\mu$, we will first sample $|X|$,  and then sample
$u_X$ conditioning on $|X|$.

To sample $|X|$, the case where $\mu$ is symmetric turns out to be
quite simple.  From Lemma \ref{l:ReV} and \eqref{e:V-sphere}, $V$ is
real valued and positive in this case.  Let $\eno Z d$, and $\zeta$
be independent random variables with $Z_i\sim N(0,1)$ and $\zeta$
positive and $(\alpha/2)$-stable such that $\mean(e^{-t\zeta}) =
e^{-t^{\alpha/2}}$ for $t\ge0$.  Put $Z=(\eno Z d)$.

\begin{prop} \label{p:sampling-sym>1}
  Let $X$ have the \pdf $g$ in Proposition \ref{p:pdf-gt1}.  If $g$ is
  symmetric, i.e., $g(x) =g(-x)$, then $V^{1/\alpha}(Z/|Z|)
  \sqrt{2\zeta} |Z|\sim |X|$.
\end{prop}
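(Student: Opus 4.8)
The plan is to compute the \pdf of the random variable $W := V^{1/\alpha}(Z/|Z|)\sqrt{2\zeta}\,|Z|$ explicitly and recognize it as the expression for $g_{|X|}$ already obtained in Corollary \ref{c:pdf-gt1}, namely \eqref{e:pdf-abs-gt1b}. Since $g$ is symmetric, $\FT\mu$ and hence $\Phi_\mu=|z|^\alpha V(u_z)$ are real-valued, so $V$ is real-valued; combined with nondegeneracy and Lemma \ref{l:ReV} this gives $\inf_{\sph{d-1}}V>0$, so every power of $V$ appearing below is a positive real.

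First I would use the classical decomposition of a standard Gaussian vector: for $Z=(\eno Z d)$ with $Z_i$ i.i.d.\ $N(0,1)$, the norm $|Z|$ and the direction $u_Z=Z/|Z|$ are independent, $u_Z$ is uniform on $\sph{d-1}$, and $|Z|$ has the chi density $f(\rho)=\rho^{d-1}e^{-\rho^2/2}/(2^{d/2-1}\Gamma(d/2))$ on $(0,\infty)$; moreover $\zeta$ is independent of $Z$. Conditioning on $u_Z=v$ and $\zeta=s$, the variable $W$ equals $c\,|Z|$ with $c=c(v,s)=V^{1/\alpha}(v)\sqrt{2s}$, so its conditional density at $r>0$ is $c^{-1}f(r/c)=r^{d-1}e^{-r^2/(2c^2)}/\bigl(c^d\,2^{d/2-1}\Gamma(d/2)\bigr)$. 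Substituting $c^2=2s\,V^{2/\alpha}(v)$ and averaging against the joint law of $(u_Z,\zeta)$ — that is, $\omega/A(\sph{d-1})$ on the sphere times the law of $\zeta$ — yields
\begin{align*}
  g_W(r)=\frac{r^{d-1}}{A(\sph{d-1})\,2^{d-1}\Gamma(d/2)}
  \sint\frac{1}{V^{d/\alpha}(v)}
  \mean\Sbr{\zeta^{-d/2}\exp\Cbr{-\frac{r^2}{4V^{2/\alpha}(v)\zeta}}}\,\omega(\dd v).
\end{align*}
Finally I would insert $A(\sph{d-1})=2\pi^{d/2}/\Gamma(d/2)$, which collapses the prefactor to $(r/2)^{d-1}/(2\pi^{d/2})$, and observe that the resulting expression is exactly \eqref{e:pdf-abs-gt1b}. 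Hence $g_W=g_{|X|}$, which is the claim.

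The computation is essentially routine; the two points deserving a word of care are (a) the interchange of $\mean_\zeta$ and $\sint$, which is immediate by Tonelli since the integrand is nonnegative, and (b) the finiteness of the displayed $\zeta$-expectation, which follows from $\mean(\zeta^s)=2\Gamma(-2s/\alpha)/(\alpha\Gamma(-s))<\infty$ for $\Re s<\alpha/2$ (as used in the proof of Corollary \ref{c:pdf-gt1}, here with $-d/2<0<\alpha/2$), the Gaussian factor moreover forcing rapid decay of the integrand as $\zeta\to0+$. One could alternatively phrase the argument through the subordination identity "$V_0^{1/\alpha}\sqrt{2\zeta}\,Z$ is spherically symmetric $\alpha$-stable with characteristic exponent $|z|^\alpha V_0$" applied conditionally on the direction $u_Z$, combined with the known radial density of the spherically symmetric case, but routing through the already-established \eqref{e:pdf-abs-gt1b} is the shortest path.
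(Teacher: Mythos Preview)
Your proof is correct and follows essentially the same route as the paper: both arguments condition on the direction $u_Z=Z/|Z|$ (using its uniformity on $\sph{d-1}$ and independence from $(|Z|,\zeta)$), compute the resulting radial density, and identify it with the expression \eqref{e:pdf-abs-gt1b} already obtained in Corollary~\ref{c:pdf-gt1}. The only cosmetic difference is that the paper first records the \pdf of $\sqrt{2\zeta}\,|Z|$ as an intermediate step and then scales by $V^{1/\alpha}(u)$, whereas you condition on $(u_Z,\zeta)$ simultaneously and write out the chi density explicitly.
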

\begin{proof}
  From Corollary \ref{c:pdf-gt1} as well as the discussion preceding
  it, 
  \begin{align*}
    \frac{A(\sph{d-1}) (r/2)^{d-1}}{2\pi^{d/2}}
    \mean\Sbr{\zeta^{-d/2}
    \exp\Cbr{-\frac{r^2}{4\zeta}}}
  \end{align*}
  is the \pdf of $\sqrt{2\zeta} |Z|$.  Then from
  \eqref{e:pdf-abs-gt1b},
  \begin{align*}
    \pr\{|X|\in \dd r\}
    =
    \nth{A(\sph{d-1})} \sint \pr\{V^{1/\alpha}(u)
    \textstyle\sqrt{2\zeta}|Z|\in\dd r\}\,\omega(\dd u).
  \end{align*}
  The \rhs can be written as $\pr\{V^{1/\alpha}(\vartheta)
  \sqrt{2\zeta} |Z|\in \dd r\}$, where $\vartheta$ is uniformly
  distributed on $\sph{d-1}$ and independent of $(|Z|, \zeta)$.
  Since $Z/|Z|$ is uniformly distributed on $\sph{d-1}$ and
  independent of $(|Z|,\zeta)$, then $\vartheta$ can be replaced with
  $Z/|Z|$ and the claim follows.
\end{proof}

For the general case, the series method is based on the following.
\begin{prop} \label{p:sampling-norm>1}
  Let the conditions in Theorem \ref{t:pdf-gt1} be satisfied.  That
  is, $\mu$ is a nondegenerate stable distribution on $\Reals^d$,
  $d\ge2$, with $\alpha\in(1,2)$ and characteristic exponent
  \eqref{e:che-ss3}.  Let $w = \inf_{\sph{d-1}} \Re(V^{-2/\alpha})$.
  \begin{enumerate}[itemsep=0ex, parsep=0ex, leftmargin=3ex,
    topsep=.4ex, label=\arabic*)]
  \item \label{i:norm>1a}
    If $w>0$, then letting $V_0 = w^{-\alpha/2}$ and $f$ be the \pdf
    of $V^{1/\alpha}_0 \sqrt{2\zeta}|Z|$, for $X\sim g$,
    \begin{align*}
      g_{|X|}(r) \le C f(r) \quad\text{with}\quad C = \sup_{\sph{d-1}}
      |V_0/V|^{d/\alpha}. 
    \end{align*}
  \item \label{i:norm>1b} Suppose $\alpha\in [4/3,2)$ and $\mu$ has a
    polynomial spectral spherical density.  Then $w>0$.
  \end{enumerate}
\end{prop}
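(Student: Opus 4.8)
I would treat the two parts separately: part \ref{i:norm>1a} is a modulus estimate on the integral representation of $g_{|X|}$ supplied by Corollary \ref{c:pdf-gt1}, and part \ref{i:norm>1b} is a short numerical deduction from part \ref{i:proj2} of Corollary \ref{c:proj-ineq}.

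For \ref{i:norm>1a}, I would start from
\begin{align*}
  g_{|X|}(r) = \frac{(r/2)^{d-1}}{2\pi^{d/2}}
  \sint \nth{V^{d/\alpha}}\mean\Sbr{\zeta^{-d/2}
  \exp\Cbr{-\frac{r^2}{4 V^{2/\alpha}\zeta}}}\,\dd\omega
\end{align*}
as in Corollary \ref{c:pdf-gt1}; since $V$ takes values in the open right half-plane by Lemma \ref{l:ReV}, the power $V^{-2/\alpha}$ is unambiguously defined and $\Re(V^{-2/\alpha})\ge w$ on $\sph{d-1}$ by the definition of $w$. As $g_{|X|}(r)\ge0$, it coincides with its own modulus and is therefore bounded by the $\omega$-integral of the modulus of the integrand. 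For each $v\in\sph{d-1}$ and each realization of $\zeta>0$ one has $\Abs{e^{-r^2/(4V(v)^{2/\alpha}\zeta)}} = e^{-r^2\Re(V(v)^{-2/\alpha})/(4\zeta)}\le e^{-r^2 w/(4\zeta)}$ and $\Abs{V(v)^{-d/\alpha}} = |V(v)|^{-d/\alpha}\le \sup_{\sph{d-1}}|V|^{-d/\alpha}$, so
\begin{align*}
  g_{|X|}(r) \le \Grp{\sup_{\sph{d-1}}|V|^{-d/\alpha}}
  \frac{A(\sph{d-1})(r/2)^{d-1}}{2\pi^{d/2}}
  \mean\Sbr{\zeta^{-d/2}\exp\Cbr{-\frac{r^2 w}{4\zeta}}}.
\end{align*}
The remaining point is to recognize the right-hand side as a multiple of $f$. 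Applying Corollary \ref{c:pdf-gt1} with the constant spectral exponent $V\equiv V_0 = w^{-\alpha/2}$ — equivalently, using the subordination description preceding that corollary, which makes $V_0^{1/\alpha}\sqrt{2\zeta}Z$ spherically symmetric $\alpha$-stable — shows that the \pdf of $V_0^{1/\alpha}\sqrt{2\zeta}|Z|$ is
\begin{align*}
  f(r) = \frac{A(\sph{d-1})(r/2)^{d-1}}{2\pi^{d/2}}
  V_0^{-d/\alpha}\mean\Sbr{\zeta^{-d/2}
  \exp\Cbr{-\frac{r^2}{4 V_0^{2/\alpha}\zeta}}},
\end{align*}
and since $V_0^{2/\alpha} = w^{-1}$ and $V_0^{-d/\alpha} = w^{d/2}$, the expectation here equals $\mean[\zeta^{-d/2} e^{-r^2 w/(4\zeta)}]$. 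Dividing the two displays gives $g_{|X|}(r)\le w^{-d/2}\sup_{\sph{d-1}}|V|^{-d/\alpha}\,f(r)$, and since $|V_0/V|^{d/\alpha} = V_0^{d/\alpha}|V|^{-d/\alpha} = w^{-d/2}|V|^{-d/\alpha}$, this is exactly $g_{|X|}(r)\le Cf(r)$ with $C = \sup_{\sph{d-1}}|V_0/V|^{d/\alpha}$.

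For \ref{i:norm>1b}, since $\mu$ has a nonzero polynomial spectral spherical density, part \ref{i:proj2} of Corollary \ref{c:proj-ineq} gives $\theta_1 := \sup_{\sph{d-1}}|\arg V| < (\pi/2)\min(\alpha,2-\alpha) = (\pi/2)(2-\alpha)$, the last equality because $\alpha\in(1,2)$. For $\alpha\ge 4/3$ one has $4-2\alpha\le\alpha$, i.e.\ $(\pi/2)(2-\alpha)\le\pi\alpha/4$, hence $\theta_1<\pi\alpha/4$ and $\theta_0 := (2/\alpha)\theta_1<\pi/2$. Since $\arg(V^{-2/\alpha}) = -(2/\alpha)\arg V$ has modulus at most $\theta_0$ on $\sph{d-1}$, and $\sup_{\sph{d-1}}|V|<\infty$ by Lemma \ref{l:ReV}, we obtain $\Re(V^{-2/\alpha})\ge |V|^{-2/\alpha}\cos\theta_0\ge(\sup_{\sph{d-1}}|V|)^{-2/\alpha}\cos\theta_0>0$ uniformly on $\sph{d-1}$, so $w>0$.

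The argument is short, and the only step that calls for care is \ref{i:norm>1a}: one must keep the branch of $z\mapsto z^{-2/\alpha}$ consistent (harmless here, as $V$ stays in the open right half-plane by Lemma \ref{l:ReV}, so both $V^{-2/\alpha}$ and $\arg(V^{-2/\alpha})$ are unambiguous) and, above all, recognize that the dominating \pdf $f$ is precisely the constant-$V_0$ instance of the formula in Corollary \ref{c:pdf-gt1}, which is what makes the pointwise modulus bound collapse to the stated multiple of $f$. Part \ref{i:norm>1b} is then an immediate check once Corollary \ref{c:proj-ineq} and the boundedness of $V$ are in hand.
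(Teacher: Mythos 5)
Your proof is correct and follows essentially the same route as the paper: part \ref{i:norm>1a} bounds the integrand of \eqref{e:pdf-abs-gt1b} in modulus using $\Re(V^{-2/\alpha})\ge w$ and $|V|^{-d/\alpha}$, then identifies the dominating density as the constant-$V_0$ (spherically symmetric) case via Proposition \ref{p:sampling-sym>1}/Corollary \ref{c:pdf-gt1}; part \ref{i:norm>1b} invokes Corollary \ref{c:proj-ineq}\ref{i:proj2} together with $\Re(V^{-2/\alpha})=|V|^{-2/\alpha}\cos(\arg(V^{2/\alpha}))$ and the boundedness of $V$. The only difference is that you spell out the scaling computation for $f$ and the uniformity of the lower bound more explicitly than the paper does, which is harmless.
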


\begin{proof}
  \ref{i:norm>1a}  From \eqref{e:pdf-abs-gt1b},
  \begin{align*}
    g_{|X|}(r)
    &\le
      \frac{(r/2)^{d-1}}{2\pi^{d/2}}
      \sint \nth{|V|^{d/\alpha}}\mean\Sbr{\zeta^{-d/2}
      e^{-\Re(V^{-2/\alpha}) r^2/4\zeta}}\,\dd\omega
    \\
    &\le
      \sup_{\sph{d-1}}|V_0/V|^{d/\alpha}
      \times \frac{(r/2)^{d-1}}{2\pi^{d/2}}
      \sint \nth{V^{d/\alpha}_0}\mean\Sbr{\zeta^{-d/2}
      e^{- V^{-2/\alpha}_0 r^2/4\zeta} }\,\dd\omega.
  \end{align*}
  Then from Proposition \ref{p:sampling-sym>1} the proof follows.
  
  \ref{i:norm>1b} From Corollary \ref{c:proj-ineq}, the assumption
  implies $\sup_{\sph{d-1}}|\arg(V^{2/\alpha})| <
  \pi(1-\alpha/2)(2/\alpha) \le\pi/2$ and so the proof follows by
  noticing $\Re(V^{-2/\alpha}) = |V|^{-2/\alpha}
  \cos(\arg(V^{2/\alpha}))$.
\end{proof}

From Proposition \ref{p:sampling-norm>1}, provided $w=\inf_{\sph{d-1}}
\Re(V^{-2/\alpha})>0$, $g_{|X|}$ can be sampled using the series
method with $f^*=C f$ as the dominating function.  However, while $f$
is easy to sample, neither $g_{|X|}$ nor $f^*$ has a closed form.  We
will rely on the series representations in Proposition
\ref{p:pdf-gt1} and Theorem \ref{t:pdf-gt1}.   The inputs to the
series method can be as follows.
\begin{enumerate}[itemsep=0ex, parsep=0ex, leftmargin=3ex, topsep=1ex]
\item $g^*(r)=g_{|X|}(r) = \sumzi k a_k(r)$, where $a_k(r)$ is any
  ordering of the terms in the \ac double series in
  \eqref{e:pdf-abs>1}.  As noted after Theorem \ref{t:pdf-gt1}, when
  $\mu$ has a polynomial spectral spherical density, all the terms can
  be expressed in closed form.
\item $f^*(r) = C f(r) = \sumzi k b_k(r)$, where $C$ and $f$ are as in
  Proposition \ref{p:sampling-norm>1}, and by \eqref{e:pdf-abs-gt1},
  \begin{align*}
    b_k(r) = 
    \frac{C A(\sph{d-1})}{\pi^{d/2}\alpha}
    \frac{(-1)^k \Gamma((2k+d)/\alpha)}{k! \Gamma(k+d/2)}
    \Grp{\frac{r}{2}}^{2k+d-1} \nth{V^{(2k+d)/\alpha}_0},
  \end{align*}
  where $V_0 = w^{-\alpha/2}$.
\end{enumerate}

Suppose $|X|$ has been sampled, the next step is to sample $u_X$
conditional on $|X|$.  Suppose $\mu$ has a polynomial spectral
spherical density $P\in\Cal P_{\dg,d}$.  Then from Theorem
\ref{t:pdf-gt1}, the \pdf of $u_X$ \wrt $\omega(\dd u)$ given $|X|=r$
is in proportion to 
\begin{align*}
  g^*(u)
  &= \sumzi {k,m} \sum^{2m + k\dg}_{n=2m} 
    \frac{(-1)^m (r/2)^n}{m!\Gamma(n-m+d/2)}
    \frac{\Gamma(k+(n+d)/\alpha)}{k! R^{k+(n+d)/\alpha}}
    S^*_{n-2m,k}(u)
  \\
  &\le
    C=\sumzi {k,m} \sum^{2m + k\dg}_{n=2m} 
    \frac{(r/2)^n}{m!\Gamma(n-m+d/2)}
    \frac{\Gamma(k+(n+d)/\alpha)}{k! R^{k+(n+d)/\alpha}}
    \sup_{\sph{d-1}} |S^*_{n-2m,k}|
    <\infty.
\end{align*}
Then the conditional distribution can be sampled using a constant $f^*$
as the dominating function, with the value of the constant being any
convenient upper bound of $C$.  The corresponding \pdf is uniform on
$\sph{d-1}$.  The way to describe the inputs to the series method
pretty much follows the one for $|X|$, so for brevity the description
is omitted.

\bibliographystyle{acmtrans-ims}

\end{document}